\def\loc{\mathop\mathrm{\,loc\,}}
\newtheorem{theorem}{Theorem}[section]
\newtheorem{lemma}[theorem]{Lemma}
\newtheorem{proposition}[theorem]{Proposition}
\newtheorem{example}[theorem]{Example}
\theoremstyle{definition}
\newtheorem{remark}[theorem]{Remark}
\newtheorem{definition}[theorem]{Definition}
\numberwithin{equation}{section}
\newcounter{Theorem}
\renewcommand{\theTheorem}{\Alph{Theorem}}
\newenvironment{thm}[1][]{
  \refstepcounter{Theorem}
  \par\addvspace{12pt}
  \noindent\textbf{Theorem~\theTheorem.}
  \if\relax\detokenize{#1}\relax\else\ \textit{(#1)}\fi
  \ignorespaces
  \itshape
}{\par\addvspace{12pt}}
\begin{document}

\title{\bf\Large
Sharp Weighted
Cohen--Dahmen--Daubechies--DeVore Inequality
with Applications to (Weighted) Critical Sobolev Spaces,
Gagliardo--Nirenberg Inequalities,
and Muckenhoupt Weights
\footnotetext{\hspace{-0.35cm} 2020
{\it Mathematics Subject Classification}.
Primary 26D10; Secondary 46E35, 42B25, 42B35, 35A23.\endgraf
{\it Key words and phrases.}
Cohen--Dahmen--Daubechies--DeVore inequality,
Gagliardo--Nirenberg inequality,
Muckenhoupt weight,
Brezis--Seeger--Van Schaftingen--Yung formula,
real interpolation, ball Banach function space.
\endgraf
This project is partially supported by the National
Key Research and Development Program of China
(Grant No.\ 2020YFA0712900),
the National Natural Science Foundation of
China (Grant Nos.\ 12431006 and 12371093),
the Fundamental Research Funds
for the Central Universities (Grant No.\ 2233300008).
}}
\date{}
\author{Yinqin Li,
Dachun Yang\footnote{Corresponding author,
E-mail: \texttt{dcyang@bnu.edu.cn}/{\color{red}\today}/Final version.},
\ Wen Yuan, Yangyang Zhang and
Yirui Zhao}

\maketitle

\vspace{-0.7cm}

\begin{center}
\begin{minipage}{13cm}
{\small {\bf Abstract}\quad
In this article, we establish a quantitative weighted variant
of a far-reaching inequality obtained
by A. Cohen, W. Dahmen, I. Daubechies, and R. DeVore in 2003,
whose dependence on the $A_p$-weight constant for any $p\in[1,\infty)$ is sharp.
As applications, we obtain the almost characterization of the critical
weighted Sobolev space in terms of wavelets,
a sharp real interpolation between
this weighted Sobolev space and weighted Besov spaces,
and three new Gagliardo--Nirenberg type inequalities in
the framework of ball Banach function
spaces. Moreover, we apply this sharp weighted
inequality to extend the famous Brezis--Seeger--Van Schaftingen--Yung
formula in ball Banach function spaces,
which gives an affirmative answer to the question
in page 29 of [Calc. Var. Partial Differential Equations 62 (2023),
Paper No. 234]. Notably, we further establish
two new characterizations
of Muckenhoupt weights related to
the inequality of Cohen et al.\ and the formula of Brezis et al.
The most novelty of this article exists in
applying and further developing
the good cube method introduced by
Cohen et al.\ to trace the sharp dependences on weight constants.
}
\end{minipage}
\end{center}
	
\vspace{0.2cm}
	
\tableofcontents

\vspace{0.2cm}

\section{Introduction}

Throughout this article, we work in $\mathbb R^n$ and,
unless necessary, we will not explicitly specify this underlying space.

As is widely known, many classical function spaces
can be characterized in terms of wavelets
as unconditional bases, which has attracted
a lot of interest for applications in
harmonic analysis, partial differential equations,
data compression, and statistical estimation
(see, for example, \cite{d93,dvdd98,kl95,m92,mc97,w97}).
However, certain function spaces with critical exponents,
such as the Lebesgue space $L^1$,
the Sobolev space $W^{1,1}$, and the bounded variation space ${\rm BV}$,
are known to possess no unconditional basis of any type.

As an alternative approach, Cohen et al. \cite{cdpx99}
cleverly established an \emph{almost characterization} of ${\rm BV}$
(see also Theorem \ref{p2143} and Remark \ref{rem-cdpx} for details),
which yielded optimal compression
or estimation algorithms based on wavelet thresholding.
Later, by incorporating weights
of the form $|Q|^\beta$ for any cube $Q$ and some
$\beta\in\mathbb{R}$ into these weak type norms,
Cohen, Dahmen, Daubechies, and DeVore
\cite[Theorems 3.1 and 4.1]{cddd03}
established a far-reaching weak type inequality, which we simply
refer to as the CDDD \emph{inequality} and
which connects level sets of renormalized averaged moduli of
continuity to gradients.
Precisely, let $\mathcal{D}^{\bf0}:=\{2^j[m+[0,1)^n]:j\in\mathbb{Z},\
m\in\mathbb{Z}^n\}$ and,
for any cube $Q$
and any locally integrable function $f$,
$\omega_Q(f):=|Q|^{-1-\frac1n}
\int_{Q}\int_{Q}|f(x)-f(y)|\,dx\,dy$
denotes the \emph{renormalized averaged modulus of continuity}
of $f$.
Cohen et al. \cite[Theorems 3.1 and 4.1]{cddd03} showed
the following CDDD inequality.

\begin{thm}\label{cddd-03}
If $\beta\in (-\infty,1-\frac1n)
\cup(1,\infty)$, then there exists a positive
constant $C$ such that, for any $f\in W^{1,1}$,
\begin{align}\label{eq-cddd-e1}
\sup_{\lambda\in(0,\infty)}\lambda
\sum_{\{Q\in\mathcal{D}^{\bf0}:\omega_Q(f)>\lambda|Q|^\beta\}}
|Q|^{\beta}\le C \left\|\,|\nabla f|\,\right\|_{L^1},
\end{align}
here and thereafter,
$\nabla f$ denotes the gradient (in the sense of distributions) of $f$.
\end{thm}

This CDDD inequality \eqref{eq-cddd-e1} enabled one to obtain the
almost characterization of the bounded variation space BV
via wavelet coefficients on more general weak type spaces,
to investigate the sharp real interpolation
between bounded variation and Besov spaces,
and to establish Gagliardo--Nirenberg type inequalities (see,
for example, \cite{bsvy,cddd03,cdpx99}).
 Nowadays, the CDDD inequality \eqref{eq-cddd-e1}
has already found many significant applications;
we refer to \cite{bm18,bsvy,c24,dht20,ds23} for
the applications in harmonic analysis and to
\cite{cddd03,cdh00,cdkp01,w03} for
the applications in nonlinear approximation.

It is natural to ask whether
\emph{the {\rm CDDD} inequality can be extended
beyond weights of the form $|Q|^\beta$ in Theorem \ref{cddd-03}}.
In this article,
we answer this question affirmatively
by establishing a quantitative weighted CDDD inequality
involving Muckenhoupt $A_p$-weight class
for any $p\in[1,\infty)$ (see Theorem \ref{thm-cddd}).
Moreover, the dependences
on the weight constants are \emph{sharp}
(see Remark \ref{rem12}).
As applications, we establish the almost characterization of the critical
weighted Sobolev space in terms of wavelets (see Theorem \ref{p2143}),
a real interpolation between
this weighted Sobolev space and weighted Besov spaces with the sharp
assumption on the smoothness exponent
(see Theorem \ref{t953} and Remark \ref{rem38}),
and a new Gagliardo--Nirenberg type inequality in the framework
of ball Banach function
spaces (which include various important function spaces); see Theorem \ref{c1016}.
To our surprise, we find that the sharp weighted CDDD inequality
can also be applied to extend the famous Brezis--Seeger--Van Schaftingen--Yung
(for short, BSVY) formula obtained by Brezis et al. \cite{bsvy.arxiv}
to the framework of ball Banach function spaces,
which gives an affirmative answer to the question
in Zhu et al. \cite[Remark 3.17(iv)]{zyy23-1};
see Theorem \ref{xgamma<0} and Remark \ref{rem411}.
Notably, we further utilize
both the weighted CDDD inequality
and the upper estimate of
the weighted BSVY formula to
establish two new characterizations
of Muckenhoupt weights (see Theorem \ref{cddd}).

We first present the sharp weighted CDDD inequality.
In what follows, let $L^1_{\loc}$
denote the set of all locally integrable functions
on $\mathbb{R}^n$ and
$\mathcal{D}$ be a dyadic grid on $\mathbb{R}^n$
(see Definition \ref{df-dyadic} for its
definition).
For any $\lambda\in(0,\infty)$, $b\in\mathbb{R}$, and
$f\in L^1_{\loc}$, let
$\mathcal{D}_{\lambda,b}[f]
:=\{Q\in\mathcal{D}:
\omega_Q(f)>\lambda|Q|^{b}\}.$
Let $p\in [1,\infty)$
and $\omega\in A_p$ [that is, the Muckenhoupt $A_p$-weight;
see Definition \ref{1557}(i) for its definition]. We
use $\dot{W}^{1,p}_\omega$ to denote the
homogeneous weighted Sobolev space
[see Definition \ref{1557}(iii) for its definition] and
let
\begin{align*}
\Omega_{p,n}:=
\begin{cases}
{\displaystyle\left(-\infty,1-\frac1n\right)\cup(1,\infty)}
&\text{if}\ p=1,\\
{\displaystyle\mathbb{R}\setminus\left\{\frac1p\right\}}
&\text{if}\ p\in(1,\infty).
\end{cases}
\end{align*}
Then we have the following sharp
weighted CDDD inequality for general
dyadic grids.

\begin{theorem}\label{thm-cddd}
Let $p\in[1,\infty)$ and $\beta\in\Omega_{p,n}$.
\begin{enumerate}
  \item[{\rm(i)}] If $\upsilon\in A_1$,
  then there exists a positive constant $C$,
  depending only on $p$, $n$, and $\beta$, such that,
  for any dyadic grid $\mathcal{D}$ and
  any $f\in \dot{W}^{1,p}_\upsilon$,
  \begin{align}\label{thm-cddd-e1}
  \sup_{\lambda\in(0,\infty)}\lambda^p
  \sum_{Q\in\mathcal{D}_{\lambda,\beta+1-\frac1p}[f]}
  |Q|^{\beta p-1}\upsilon(Q)\le C
  [\upsilon]_{A_1}\left\|\,|\nabla f|\,\right\|_{L^p_{\upsilon}}^p.
  \end{align}

  \item[{\rm(ii)}] If $\upsilon\in A_p$,
  then there exists a positive constant $C$,
  depending only on $p$, $n$, and $\beta$, such that,
  for any dyadic grid $\mathcal{D}$ and
  any $f\in \dot{W}^{1,p}_\upsilon$,
  \begin{align}\label{thm-cddd-e2}
  \sup_{\lambda\in(0,\infty)}\lambda^p
  \sum_{Q\in\mathcal{D}_{\lambda,\beta+1-\frac1p}[f]}
  |Q|^{\beta p-1}\upsilon(Q)\le C
  [\upsilon]_{A_p}^{\alpha_{p,\beta}}
  \left\|\,|\nabla f|\,\right\|_{L^p_{\upsilon}}^p,
  \end{align}
  where
  \begin{align*}
  \alpha_{p,\beta}:=
  \begin{cases}
  {\displaystyle\frac{p}{p-1}}&{\displaystyle\text{if}\ p\in(1,\infty)\ \text{and}\
  \beta\in\left[\frac1p-1,\frac1p\right)},\\
  1&\text{otherwise}.
  \end{cases}
  \end{align*}
\end{enumerate}
\end{theorem}

\begin{remark}\label{rem12}
\begin{enumerate}
  \item[{\rm(i)}] If $p=1$,
$\upsilon\equiv1$, and $\mathcal{D}:=\mathcal{D}^{\bf0}$,
then Theorem \ref{thm-cddd} in this case exactly reduces
to the CDDD inequality
\eqref{eq-cddd-e1}
obtained by Cohen et al. in \cite{cddd03}.
  \item[{\rm(ii)}] The dependences on
  weight constants in Theorem \ref{thm-cddd} are sharp (see Proposition
  \ref{sharp}).
  To be precise, (i) and (ii) of Proposition \ref{sharp}
  imply that the exponent $1$ of $[\upsilon]_{A_1}$
in \eqref{thm-cddd-e1} can not be replaced by a smaller real number
and, if $p=1$ or $p\in(1,\infty)$ and $\beta\in(-\infty,\frac1p-1]\cup(\frac1p,\infty)$,
then the exponent $\alpha_{p,\beta}$
of $[\upsilon]_{A_p}$ in \eqref{thm-cddd-e2}
can not be replaced by a smaller real number;
Proposition \ref{sharp}(iii) shows that, if $p\in(1,\infty)$
and $\beta_0\in(\frac1p-1,\frac1p)$ and if assume
that \eqref{thm-cddd-e2} holds uniformly
with respect to $\beta\in(\frac1p-1,\beta_0)$
(this assumption is reasonable; see Proposition \ref{es-dyadic2}),
then the exponent $\alpha_{p,\beta}$
of $[\upsilon]_{A_p}$ in \eqref{thm-cddd-e2}
can not be replaced by a smaller real number.
Therefore, in these senses,
the dependences on weighted constants in Theorem \ref{thm-cddd}
are sharp.
\end{enumerate}
\end{remark}

To derive Theorem \ref{thm-cddd}, we apply and further develop
the method introduced by Cohen et al.\ in \cite{cddd03},
which we simply refer to as the \emph{good cube method}.
Precisely, Cohen et al. established the CDDD inequality \eqref{eq-cddd-e1}
by considering three cases for the dimension $n$ and
the parameter $\beta$.
The most challenging case occurs when
$n\in\mathbb{N}\cap[2,\infty)$ and
$\beta \in [0,1 - \frac{1}{n})$, where
they ingeniously partitioned the dyadic cubes in
$\mathbb{R}^n$ into good and bad cubes (see \cite[Definition 4.2]{cddd03} for their definitions)
and combined subtle geometric properties of good cubes with
both the coarea formula and the isoperimetric inequality.
In this article, we generalize their good cube method
to the weighted setting (see Definition \ref{df-good} for our
corresponding definitions of good and bad cubes)
to establish Theorem \ref{thm-cddd}
for this difficult case for $n$ and $\beta$
(see Proposition \ref{cddd-suff} and its proof).
On the other hand, for the case $n=1$ and $\beta\in(-\infty,0)$,
Cohen et al.\ employed a minimal cube technique (see \cite[pp.\,249--250]{cddd03}).
However, this approach seems to
be insufficient for obtaining the
sharp dependence on weight constants
because the transformation of larger cubes into minimal cubes
requires the doubling property of weights and hence
yields a superfluous weight constant.
To overcome this limitation, we modify the good cube method
via classifying cubes based on the magnitude of integral means
(see Proposition \ref{es-dyadic1} and its proof).
Furthermore, through a flexible combination of power weights and carefully
constructed functions,
we prove the sharpness of the dependences
on weight constants in Theorem \ref{thm-cddd} (see Proposition \ref{sharp} and
its proof).

Using the above sharp weighted CDDD inequalities
\eqref{thm-cddd-e1} and \eqref{thm-cddd-e2},
we give the following three applications.

\textbf{(I) Sharp real interpolation and Gagliardo--Nirenberg inequality.}
Notice that, when a function space has an equivalent
characterization via
some sequence space of wavelet coefficients,
the study of the real interpolation between
such space and Besov spaces reduces to
investigating the real interpolation of corresponding sequence spaces.
Although the critical Sobolev space $W^{1,1}$
and the bounded variation space ${\rm BV}$ lack
such characterizations,
Cohen et al. \cite{cddd03} still used
their almost wavelet characterization
to establish the corresponding real interpolation result (see also
\cite{c24}).
Applying Theorem \ref{thm-cddd}, we are able to
prove the almost wavelet characterization
(see Theorem \ref{p2143})
of the critical weighted Sobolev space $W^{1,1}_{\upsilon}$
[see Definition \ref{1557}(vi)],
which further yields the
following real interpolation result
between this weighted Sobolev space and the weighted Besov spaces
$B^{s,\upsilon}_{p,p}$ (see Section \ref{s-ip}
for the precise definitions of all the symbols).

\begin{theorem}\label{t953}
Let $\upsilon\in A_1$, $p\in(1,\infty)$, and $s\in\mathbb{R}$.
If $s\in(-\infty,\frac1p)\cup(1,\infty)$,
$\theta\in(0,1)$,
$\sigma\in\mathbb{R}$, and $q\in(1,\infty)$ satisfying
$\sigma=1-\theta+\theta s$ and $\frac1q=1-\theta+\frac{\theta}{p}$,
then $(W^{1,1}_{\upsilon},B^{s,\upsilon}_{p,p})_{\theta,q}
=B^{\sigma,\upsilon}_{q,q}.$
\end{theorem}

\begin{remark}\label{rem38}
\begin{enumerate}
  \item[{\rm(i)}] In Theorem \ref{t953},
the assumption $(-\infty,\frac{1}{p})\cup(1,\infty)$
on the smoothness exponent $s$ is sharp.
Indeed, let $\upsilon\equiv1$.
Then, in this case, from \cite[Theorem 1]{bm18}, we deduce that
Theorem \ref{t953} fails if $s\in[\frac{1}{p},1)$.
  
  \item[{\rm(ii)}] Notice that the proof of Theorem \ref{t953}
  strongly depends on the case $p=1$ of the weighted CDDD inequality 
  (Theorem \ref{thm-cddd}) and hence Theorem \ref{t953} needs
  the assumption $\upsilon\in A_1$, while the assumption $\upsilon\in A_p$
  with $p\in (1,\infty)$ is not enough. 
\end{enumerate}
\end{remark}

As a corollary of both Theorem \ref{t953} and the
method of extrapolation, we further obtain
a Gagliardo--Nirenberg inequality in the framework
of ball Banach function spaces (see Theorem \ref{c1016}),
whose assumption on the smoothness exponent
is also sharp [see Remark \ref{rem52}(i)].
Recall that the concept of ball
(quasi-)Banach function spaces was
introduced by Sawano et al. \cite{shyy2017}
to unify the study of several different important function spaces,
including weighted and variable Lebesgue spaces,
(Bourgain--)Morrey type spaces, local and global generalized
Herz spaces, and Orlicz(-slice) spaces;
see Section \ref{S5} for their exact definitions
and histories. Since its introduction and its wide generality, ball
(quasi-)Banach function spaces have recently attracted a lot of attention
and yielded many applications; we refer the reader to
\cite{hcy2021,is2017,tyyz2021,wyy2020,zwyy2021}
for the boundedness of operators and to
\cite{lyh2320,wyyz2021,zhyy2022} for the real-variable
theory of function spaces.

\textbf{(II) Brezis--Seeger--Van Schaftingen--Yung
formula.}
Notably, Theorem \ref{thm-cddd} has a deep connection with the
Brezis--Seeger--Van Schaftingen--Yung formula.
Recall that Brezis et al. \cite{bsvy.arxiv} recently
established a one-parameter family of equivalent formula of gradients via the size of
level sets of suitable difference quotients (see \cite{bsvy.arxiv,bsvy}),
which are simply called the BSVY \emph{formula}. Precisely speaking,
in what follows, for any $\gamma\in\mathbb{R}$,
$q,\lambda\in(0,\infty)$, and $f\in L^1_{\loc}$, let
\begin{align}\label{Elambda}
E_{\lambda,\frac{\gamma}{q}}[f]:=\left\{(x,y)\in\mathbb{R}^n\times\mathbb{R}^n:
x\neq y,\ \frac{|f(x)-f(y)|}{|x-y|^{1+\frac{\gamma}{q}}}>\lambda\right\}.
\end{align}
In \cite{bsvy.arxiv}, Brezis et al. proved that,
if $p\in(1,\infty)$ and $\gamma\in \mathbb{R}\setminus\{0\}$ or
$p=1$ and $\gamma\in(-\infty,-1)\cup(0,\infty)$,
then, for any $f\in \dot{W}^{1,p}$ (the homogeneous Sobolev space),
\begin{align}\label{bsvy-e1}
\sup_{\lambda\in(0,\infty)}\lambda
\left[\int_{\mathbb{R}^n}\int_{\mathbb{R}^n}
{\bf 1}_{E_{\lambda,\frac{\gamma}{p}}[f]}(x,y)
|x-y|^{\gamma-n}\,dy\,dx\right]^{\frac1p}
\sim\left\|\,\left|\nabla f\right|\,\right\|_{L^p}.
\end{align}
This formula gives a brand new answer to
the question of fundamental importance: how to
represent gradients via difference quotients;
see \cite{bbm2001,b2002,bsvy,bvy2021,dssvy}
for more related studies. Moreover,
using the BSVY formula, Brezis et al.
gave new characterizations of both homogeneous Sobolev spaces
and bounded variation spaces in \cite{bsvy.arxiv} and
established critical fractional Sobolev and fractional
Gagliardo--Nirenberg type inequalities in \cite{bsvy,bvy2021}
(see also Section \ref{sec-gn}).
For more developments of BSVY formulae and their applications,
we refer to
\cite{dgpyyz2022,dlyyz2022,
dm.arXiv1,dm.arXiv,gh2023,p22}.

Very recently, Zhu et al.\ \cite{zyy23-1}
extended the BSVY formula to
a framework of function spaces, called the ball
Banach function space.
However, that result is not complete. Indeed,
Zhu et al.\ in \cite[Remark 3.17(iv)]{zyy23-1} explicitly pointed out
that, in some function spaces with critical exponents,
it was \emph{unknown} whether the upper
estimate of the BSVY formula in ball Banach function spaces holds
when $\gamma\in(-\infty,0)$ and $n\in\mathbb{N}\cap[2,\infty)$.

In this article, applying Theorem \ref{thm-cddd},
we give an affirmative answer
to the question
in \cite[Remark 3.17(iv)]{zyy23-1}
via establishing a BSVY formula in ball Banach function spaces
with sharp parameters (see Theorem \ref{xgamma<0} and Remark \ref{rem411}).
To be precise, by
leveraging the approximation properties of shifted
dyadic grids in $\mathbb{R}^n$ [see Lemma \ref{2115}(ii)]
and the famous telescope method (see Lemma \ref{Poin}),
we establish two key dominations for weak norms of
the level set \eqref{Elambda} (see Propositions \ref{point} and \ref{point2})
and hence connect the upper estimate of the BSVY formula
\eqref{bsvy-e1} to the weighted CDDD inequality \eqref{thm-cddd-e1}.
This, together with Theorem \ref{thm-cddd} and
the standard extrapolation in ball Banach function spaces,
further yields the desired BSVY formula.
Moreover, as applications of Theorem \ref{xgamma<0}, we
obtain two Gagliardo--Nirenberg type inequalities
in ball Banach function spaces, which also
improve the corresponding results in \cite{zyy23-1}
by removing the restriction $n=1$;
see Theorems \ref{2256} and \ref{2255} and Remark
\ref{rm48}. All these results are
of quite wide generality and are applied
to various specific function spaces,
all of which improve the corresponding results
obtained in \cite{zyy23-1} by removing the restriction $n=1$
and making the range of the exponent $q$ sharp;
see Section \ref{S5} for the details.

\textbf{(III) New Characterizations of Muckenhoupt Weights.}
As a further application of Theorems \ref{thm-cddd} and \ref{xgamma<0},
we obtain two new characterizations of the Muckenhoupt weight class
via the weighted CDDD inequality and
the upper estimate of the weighted BSVY formula as follows,
which reveals deep connections among these three objects
and gives a new perspective
of the Muckenhoupt weight class via the smoothness
structure of functions.
For any $p\in [1,\infty)$ and any nonnegative locally
integrable function $\upsilon$ on
$\mathbb{R}^n$,
$\dot{Y}^{1,p}_{\upsilon}$
denotes the homogeneous
weighted Sobolev space [see \eqref{df-Ynp}
for its definition] and
$\{\mathcal{D}^\alpha\}_{\alpha\in\{0,\frac13,\frac23\}^n}$
denotes the set of shifted dyadic grids (see Example \ref{e-dya}).

\begin{theorem}\label{cddd}
If $p\in[1,\infty)$
and $\upsilon\in L^1_{\loc}$ is nonnegative,
then the following statements
are mutually equivalent.
\begin{enumerate}
   \item[{\rm(i)}] $\upsilon\in A_p$.
  \item[{\rm(ii)}] There exist $\beta\in\mathbb{R}\setminus\{\frac1p\}$
  and $C\in(0,\infty)$ such that,
  for any $\alpha\in\{0,\frac13,\frac23\}^n$ and
  $f\in \dot{Y}^{1,p}_{\upsilon}\cap L^1_{\loc}$,
  \begin{align*}
  \sup_{\lambda\in(0,\infty)}\lambda^p
  \sum_{Q\in\mathcal{D}^\alpha_{\lambda,\beta+1-\frac1p}[f]}
  |Q|^{\beta p-1}\upsilon(Q)\le
  C\|f\|_{\dot{Y}^{1,p}_{\upsilon}}^p.
  \end{align*}
  \item[{\rm(iii)}] There exist $\gamma\in\mathbb{R}$,
  $q\in(0,\infty)$, and $C\in(0,\infty)$ such that,
  for any $f\in \dot{Y}^{1,p}_{\upsilon}\cap
  L^1_{\loc}$,
\begin{align}\label{cddd-necee11}
&\sup_{\lambda\in(0,\infty)}
\lambda^p\int_{\mathbb{R}^n}
\left[\int_{\mathbb{R}^n}{\bf 1}_{E_{\lambda,
\frac{\gamma}{q}}[f]}(x,y)|x-y|^{\gamma-n}\,dy
\right]^{\frac pq}\upsilon(x)\,dx
\le C\|f\|_{\dot{Y}^{1,p}_{\upsilon}}^p.
\end{align}
\end{enumerate}
\end{theorem}

Moreover, Theorem \ref{thm-cddd} surely have
more applications and,
in particular,
has been used
to establish another new characterization
of Muckenhoupt weights and a sharp representation formula of gradients
in terms of oscillations in
\cite{zlyyz24} and
to study the BSVY formula for higher-order gradients
in ball Banach function spaces in \cite{hlyyz25}.

The organization of the remainder of this article is as follows.
In Section \ref{section1}, we aim to prove Theorem \ref{thm-cddd}
and its sharpness.
The main target of Section \ref{s-ip} is to
establish an almost characterization of
the critical weighted Sobolev space
in terms of Daubechies wavelets (see Theorem \ref{p2143});
as an application, we further show
the sharp real interpolation between weighted
Sobolev and Besov spaces (that is, Theorem \ref{t953}).
Later, we apply Theorem \ref{thm-cddd}
to establish the BSVY formula in ball Banach function spaces
in Section \ref{section2} (see Theorem \ref{xgamma<0}).
Next, we show Theorem \ref{cddd}
(that is, two new characterizations of Muckenhoupt weights)
in Section \ref{sec-6}.
In Section \ref{sec-gn}, we further use Theorems \ref{t953}
and \ref{xgamma<0} to establish three Gagliardo--Nirenberg type inequalities
in the framework of ball Banach function spaces,
which are sharp or improve the corresponding known results
(see Theorems \ref{c1016}, \ref{2256}, and \ref{2255} and
Remarks \ref{rem52} and \ref{rm48}).
Finally, in Section \ref{S5},
we apply Theorems \ref{xgamma<0}, \ref{c1016},
\ref{2256}, and \ref{2255}
to obtain various Gagliardo--Nirenberg inequalities and
BSVY formulae in specific function spaces.

We end this introduction by making some conventions on symbols.
We always let $\mathbb{N}:=\{1,2,\ldots\}$ and
$\mathbb{Z}_+:=\mathbb{N}\cup\{0\}$.
For any $q\in[1,\infty]$,
its \emph{conjugate index} is denoted by $q'$,
that is, $\frac{1}{q}+\frac{1}{q'}=1$.
If $E$ is a subset of $\mathbb{R}^n$, we denote by
$E^\complement$ the set $\mathbb{R}^n\setminus E$,
by $|E|$ the $n$-dimensional Lebesgue measure of $E$,
and by $\partial E$ the boundary of $E$.
Moreover, we use $\mathbf{0}$ to denote the
\emph{origin} of $\mathbb{R}^n$.
For any $x\in\mathbb{R}^n$ and $r\in(0,\infty)$,
let $B(x,r):=\{y\in\mathbb{R}^n:|x-y|<r\}$
be a ball in $\mathbb{R}^n$.
The symbol $\mathcal{Q}$ denotes the set of all cubes
with edges parallel to the coordinate axes.
For any $\lambda\in(0,\infty)$
and any ball $B:=B(x_B,r_B)$ in
$\mathbb{R}^n$ with center $x_B\in\mathbb{R}^n$
and radius $r_B\in(0,\infty)$, let $\lambda B:=B(x_B,\lambda r_B)$;
for any cube $Q\in\mathcal{Q}$,
$l(Q)$ means its edge length and
$\lambda Q$ means the cube with the same
center as $Q$ and $\lambda$ times its edge length.
In addition, we always denote by $C$ a
\emph{positive constant} which is independent
of the main parameters involved, but may vary from line to line.
We use $C_{(\alpha,\dots)}$ to denote a positive constant depending
on the indicated parameters $\alpha,\, \dots$.
The symbol $f\lesssim g$ means $f\le Cg$
and, if $f\lesssim g\lesssim f$, then we write $f\sim g$;
moreover, if $f\le Cg$ and $g=h$ or $g\le h$,
we then write $f\lesssim g=h$ or $f\lesssim g\le h$.
Throughout this article, we denote by
$C_{\mathrm{c}}$
the set of all continuous functions with compact support,
by $C_{\rm c}^N$, with $N\in\mathbb{N}\cup\{\infty\}$, the set of all $N$-times
differentiable functions with compact support,
and by $C^\infty$
the set of all infinitely differentiable
functions.
For any $f\in L^1_{\loc}$ and any bounded
measurable set $E\subset\mathbb{R}^n$, let
$
f(E):=\int_{E}f(x)\,dx$ and
$
f_E:=\fint_Ef(x)\,dx
:=\frac{1}{|E|}\int_Ef(x)\,dx.$
In addition, $\varepsilon\to 0^+$ means that
there exists $c_0\in(0,\infty)$ such that
$\varepsilon\in(0,c_0)$ and $\varepsilon\to0$.
Finally, in all proofs, we consistently retain the symbols
introduced in the original theorem (or related statement).

\section{Proof of Theorem \ref{thm-cddd}
(Sharp Weighted CDDD Inequality)}\label{section1}

The target of this section is to prove Theorem \ref{thm-cddd}
and its sharpness. First, we recall
the following well-known concept of dyadic grids
(see, for example, \cite{cm13,ln19}).

\begin{definition}\label{df-dyadic}
A set  $ \mathcal{D} $ of countable cubes in $\mathbb{R}^n$
is called a \emph{dyadic grid} if it satisfies that
\begin{enumerate}
\item[{\rm (i)}] if $Q\in \mathcal{D}$, then $l(Q)=2^k$ for some $k\in{\mathbb Z}$;
\item[{\rm (ii)}] if $P,Q\in\mathcal{D}$, then $P\cap Q\in \{\emptyset,P,Q\}$;
\item[{\rm (iii)}] for any $k\in{\mathbb Z}$, one has
$\mathbb{R}^n=\bigcup_{\{Q\in\mathcal{D}:l(Q)=2^k\}}Q.$
\end{enumerate}
\end{definition}

Throughout this article, for any dyadic grid $\mathcal{D}$
and any $k\in\mathbb{Z}$,
define $\mathcal{D}_k:=\{Q\in\mathcal{D}:l(Q)=2^k\}$
and, for any $k\in\mathbb{Z}_+$ and $Q\in\mathcal{D}$,
define $\mathcal{D}_k(Q):=\{P\in\mathcal{D}:P\subset Q,\ l(P)=2^{-k}l(Q)\}$.

\begin{example}\label{e-dya}
For any $\alpha\in\{0,\frac{1}{3},\frac{2}{3}\}^n$,
the \emph{shifted dyadic grid} $\mathcal{D}^\alpha$ is defined by
setting
\begin{align}\label{dyadic}
\mathcal{D}^\alpha:=\left\{2^j\left[m+[0,1)^n+(-1)^j\alpha\right]:
j\in\mathbb{Z},\ m\in\mathbb{Z}^n\right\}.
\end{align}
By definitions, one can easily find that, for any
$\alpha\in\{0,\frac13,\frac23\}^n$,
$\mathcal{D}^\alpha$
is a dyadic grid in the sense of Definition
\ref{df-dyadic}.
\end{example}

Moreover, recall the following
definitions of $A_p$-weights
as well as weighted functions spaces;
see, for instance, \cite[Section 7.1]{g2014}.
In what follows, we
denote by $\mathscr{M}$
the set of all measurable functions
on $\mathbb{R}^n$.

\begin{definition}\label{1557}
Let $p\in[1,\infty)$ and
$\upsilon$ be a nonnegative locally integrable function on $\mathbb{R}^n$.
\begin{enumerate}
\item[{\rm (i)}]
The function $\upsilon$ is called an \emph{$A_p$-weight} if,
when $p\in(1,\infty)$,
\begin{align*}
[\upsilon]_{A_p}:=
\sup_{Q\subset {\mathbb{R}^n}}
\left[\frac{1}{|Q|}\int_{Q}\upsilon(x)\,dx\right]
\left\{\frac{1}{|Q|}\int_{Q}[\upsilon(x)]^{-\frac{1}{p-1}}\,dx\right\}^{p-1}
<\infty
\end{align*}
and, when $p=1$,
\begin{align*}
[\upsilon]_{A_1}:=\sup_{Q\subset {\mathbb{R}^n}}
\left[\frac{1}{|Q|}
\int_{Q}\upsilon(x)\,dx\right]
\left\|\upsilon^{-1}\right\|_{L^\infty (Q)}
<\infty,
\end{align*}
where
the suprema are
taken over all cubes $Q\subset {\mathbb{R}^n}$.
\item[{\rm(ii)}]The \emph{weighted Lebesgue space} $L^p_\upsilon$
is defined to be the set of all $f\in\mathscr{M}$ such that
$\|f\|_{L^p_\upsilon}:=[\int_{\mathbb{R}^n}|f(x)|^p
\upsilon(x)\,dx]^{\frac{1}{p}}<\infty.$
\item[{\rm(iii)}]
The \emph{homogeneous weighted Sobolev space}
$\dot{W}_{\upsilon}^{1,p}$ is defined to be the set
of all $f\in L^1_{\rm loc}$ such that
the weak gradient $\nabla f$ of $f$ exists
and
$|\nabla f| \in L^p_\upsilon$.
Moreover, the \emph{seminorm}
$\|\cdot\|_{\dot{W}^{1,p}_{\upsilon}}$
of $\dot{W}^{1,p}_{\upsilon}$
is defined by setting,
for any $f\in \dot{W}^{1,p}_{\upsilon}$,
$\|f\|_{\dot{W}^{1,p}_{\upsilon}}
:=\|\,|\nabla f|\,\|_{L^p_\upsilon}.$
\item[{\rm(iv)}]
The \emph{inhomogeneous weighted Sobolev space}
$W_{\upsilon}^{1,p}$ is defined to be the set
of all tempered distributions
$f$ such that $f\in L^1_{\upsilon}$,
the weak gradient $\nabla f$ of $f$ exists,
and $|\nabla f| \in L^p_\upsilon$.
Moreover, the \emph{norm}
$\|\cdot\|_{W^{1,p}_{\upsilon}}$
of $W^{1,p}_{\upsilon}$
is defined by setting,
for any $f\in W^{1,p}_{\upsilon}$,
$\|f\|_{W^{1,p}_{\upsilon}}
:=\|f\|_{L^1_\upsilon}+\|\,|\nabla f|\,\|_{L^p_\upsilon}.$
\end{enumerate}
\end{definition}

In this article, we frequently need the following properties
of Muckenhoupt $A_p$-weights;
see, for instance, \cite[Sections 7.1 and 7.2]{g2014}
and \cite[Lemma 3.1]{c25}.
In what follows, $M$ denotes the
\emph{Hardy--Littlewood maximal operator} which is defined by setting,
for any $f\in L^1_{\loc}$ and
$x\in\mathbb{R}^n$,
$M(f)(x):=\sup_{Q\ni x}\frac{1}{|Q|}\int_Q|f(y)|\,dy,$
where the supremum is taken over all
cubes $Q\in\mathcal{Q}$ containing $x$.

\begin{lemma}\label{ApProperty}
Let $p\in[1,\infty)$ and $\upsilon\in A_p$.
\begin{enumerate}
\item[\textup{(i)}]
If $p=1$, then, for almost every $x\in\mathbb{R}^n$,
$M(\upsilon)(x)\leq[\upsilon]_{A_1}\upsilon(x)$.
\item[\textup{(ii)}] For any cube $Q\in\mathcal{Q}$
and any measurable set $S\subset Q$,
$\upsilon(Q)\le[\upsilon]_{A_p}
\left(|Q|/|S|\right)^p\upsilon(S)$.
\item[\textup{(iii)}]
If $\omega$ is a nonnegative locally integrable
function on $\mathbb{R}^n$, then $\omega\in A_p$
if and only if
\begin{align*}
[\omega]^*_{A_p}:
=\sup_{Q\subset {\mathbb{R}^n} }
\sup_{\|f{\bf 1}_{Q}\|_{L^p_{\omega}}
\in (0,\infty)}
\frac{[\frac{1}{|Q|}\int_{Q}|f(x)|\,dx]^p}{\frac{1}{\omega(Q)}
\int_{Q}|f(x)|^p\omega(x)\,dx}<\infty,
\end{align*}
where the first supremum is taken over all cubes $Q\subset
{\mathbb{R}^n}$
and the second supremum is taken
over all $f\in L^{1}_{\rm loc}$ such that
$\|f{\bf 1}_{Q}\|_{L^p_{\omega}}
\in (0,\infty).$ Moreover, for any $\omega\in A_p$,
$[\omega]^*_{A_p}=[\omega]_{A_p}$.
\item[\textup{(iv)}]
If $p\in(1,\infty)$ and $\mu:=\upsilon^{1-p'}$,
then $\mu\in A_{p'}$ and, for any $q\in[1,p)$,
$[\mu]_{A_{p'}}^{p-1}=[\upsilon]_{A_p}
\leq[\upsilon]_{A_q}.$
\item[\textup{(v)}]
If $p\in(1,\infty)$, then $M$ is bounded on $L^p_\upsilon$
and there exists a positive constant $C_{(n,p)}$,
depending only on both $n$ and $p$, such that
\begin{align}\label{lem-apwight-e1}
\|M\|_{L^p_\upsilon
\to L^p_\upsilon}
\leq C_{(n,p)}[\upsilon]_{A_p}^{\frac{1}{p-1}},
\end{align}
where
$\|M\|_{L^p_\upsilon\to L^p_\upsilon}$
denotes the operator norm of $M$ on $L^p_\upsilon$;
furthermore, if $\upsilon\in A_1$, then
$[\upsilon]_{A_p}^{\frac{1}{p-1}}$ in \eqref{lem-apwight-e1}
can be replaced by
$[\upsilon]_{A_1}^{\frac1p}$.
\item[{\rm(vi)}] If $p\in(1,\infty)$, then
$A_p=\bigcup_{q\in(1,p)}A_q.$
\end{enumerate}
\end{lemma}

To show Theorem \ref{thm-cddd}, we
consider three different cases
for $p$, $n$, and $\beta$. We first
deal with the case $p\in(1,\infty)$ and $\beta\in[\frac{1}{p}-1,\frac1p)$
in the following proposition.

\begin{proposition}\label{es-dyadic2}
If $p\in(1,\infty)$ and $\upsilon\in A_p$, then there exists a positive constant
$C$, depending only on $n$ and $p$,
such that, for any $\beta\in[\frac1p-1,\frac1p)$,
any dyadic grid $\mathcal{D}$,
and any $f\in\dot{W}^{1,p}_\upsilon$,
\begin{align}\label{es-dyadic2-e1}
\sup_{\lambda\in(0,\infty)}\lambda^p
\sum_{Q\in\mathcal{D}_{\lambda,\beta+1-\frac1p}[f]}
|Q|^{\beta p-1}\upsilon(Q)\le C
\frac{1}{\frac1p-\beta}[\upsilon]_{A_p}^{\frac{p}{p-1}}
\|\,|\nabla f|\,\|_{L^p_\upsilon}^p.
\end{align}
Furthermore, if $\upsilon\in A_1$, then
$[\upsilon]_{A_p}^{\frac{p}{p-1}}$ in \eqref{es-dyadic2-e1}
can be replaced by $[\upsilon]_{A_1}$.
\end{proposition}

To show this proposition,
we need the following sparseness of
cubes in the set
$\mathcal{D}_{\lambda,\beta+1-\frac1p}[f]$,
which is a consequence of the properties
of dyadic grids.

\begin{lemma}\label{lem-sd}
If $p\in[1,\infty)$,
$\beta\in\mathbb{R}\setminus\{\frac1p\}$,
$\lambda,r\in(0,\infty)$, and $f\in C^\infty$
satisfies $|\nabla f|\in C_{\rm c}$,
then, for any dyadic grid $\mathcal{D}$ and any
$x\in\bigcup_{Q\in\mathcal{D}_{\lambda,\beta+1-\frac1p}[f]}Q$,
there exists a cube
$Q_x\in \mathcal{D}_{\lambda,\beta+1-\frac1p}[f]$
containing $x$ such that
\begin{align}\label{eq2030}
\sum_{Q\in\mathcal{D}
_{\lambda,\beta+1-\frac1p}[f]}
\left|Q\right|^{r(\beta-\frac1p)}
{\bf 1}_Q(x) \lesssim\frac{1}{|\frac1p-\beta|}
 |Q_x|^{r(\beta-\frac1p)},
\end{align}
where the implicit positive
constant depends only on
$n$, $p$, and $r$.
\end{lemma}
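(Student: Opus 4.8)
The plan is to exploit the dyadic nestedness of $\mathcal{D}^\alpha$ (Lemma \ref{2115}(i)) together with a decay estimate showing that, along any nested chain of cubes in $\mathcal{D}^\alpha_{\lambda,\beta+1-\frac1p}[f]$ through a fixed point $x$, the side lengths (hence the volumes) grow or shrink geometrically. Fix $x\in\Omega^\alpha_{\lambda,\beta+1-\frac1p}[f]$. Since the cubes of $\mathcal{D}^\alpha$ containing $x$ form a nested increasing chain $\{Q^{(j)}\}_{j}$ with $|Q^{(j)}|=2^{nj}\cdot\text{const}$, the collection $\{Q\in\mathcal{D}^\alpha_{\lambda,\beta+1-\frac1p}[f]:\ x\in Q\}$ is a sub-chain, and the left-hand side of \eqref{eq2030} is a sum of the quantity $|Q|^{r(\beta-\frac1p)}$ over this sub-chain. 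Because $\beta\ne\frac1p$, the exponent $r(\beta-\frac1p)$ is a nonzero real number; the key point will be to show that only one \emph{extreme} cube in the chain can be chosen as $Q_x$ (the smallest one if $\beta<\frac1p$, the largest one if $\beta>\frac1p$) and that the full sum is comparable to its contribution.

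First I would establish the crucial one-sided boundedness of the chain. If $\beta<\frac1p$, I claim there is a \emph{smallest} cube in $\mathcal{D}^\alpha_{\lambda,\beta+1-\frac1p}[f]$ containing $x$: indeed, for a very small cube $Q\ni x$, the oscillation $\omega_Q(f)=|Q|^{-1-\frac1n}\int_Q\int_Q|f(u)-f(v)|\,du\,dv$ is controlled, via the mean value theorem and the smoothness of $f$, by $|Q|^{\frac1n}\|\nabla f\|_{L^\infty}$, so $\omega_Q(f)\le C|Q|^{\frac1n}$, while the threshold $\lambda|Q|^{\beta+1-\frac1p}$ with exponent $\beta+1-\frac1p$; since $\beta+1-\frac1p$ versus $\frac1n$ — here I must be careful — the condition forcing small cubes out of the level set is that the threshold dominates $|Q|^{\frac1n}$ as $|Q|\to0$, i.e. $\beta+1-\frac1p<\frac1n$. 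Rather than chase this, the cleaner route is: among all cubes of $\mathcal{D}^\alpha_{\lambda,\beta+1-\frac1p}[f]$ containing $x$, if the chain were unbounded below then $\omega_{Q}(f)/|Q|^{\beta+1-\frac1p}$ would have to stay $>\lambda$ along arbitrarily small $Q$, contradicting $\omega_Q(f)\lesssim|Q|^{\frac1n}\|\nabla f\|_\infty$ precisely when $\beta+1-\frac1p>\frac1n$; in the complementary subrange one argues the chain is bounded \emph{above} using instead the bound $\omega_Q(f)\le 2\fint_Q|f-f_Q|\le C\,|Q|^{1/n}\cdot(\text{something integrable})$ together with $|\nabla f|$ having compact support, so for $|Q|$ large $\omega_Q(f)$ is essentially constant in $|Q|$ while the threshold $\lambda|Q|^{\beta+1-\frac1p}$ blows up or decays. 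The honest version of this step — pinning down for which sign of $\beta-\frac1p$ the chain is bounded below and for which it is bounded above, using compact support of $\nabla f$ and the elementary oscillation bounds — is what I expect to be the main obstacle, and it is the place where the hypothesis $\beta\neq\frac1p$ and the smoothness/compact support of $f$ are genuinely used.

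Once the chain $\{Q\in\mathcal{D}^\alpha_{\lambda,\beta+1-\frac1p}[f]:\ x\in Q\}$ is known to have an extreme element $Q_x$ (smallest when the relevant exponent is positive, largest when negative) and the side lengths along the chain are the increasing sequence $\ell(Q_x),2\ell(Q_x),4\ell(Q_x),\dots$ (or the decreasing analogue), the final step is a geometric-series estimate: writing $s:=r(\beta-\frac1p)\ne0$ and $|Q_k|=2^{nk}|Q_x|$ for $k=0,1,2,\dots$ (with the convention chosen so that $s$ and the direction of the chain match), we get
\begin{align*}
\sum_{Q\in\mathcal{D}^\alpha_{\lambda,\beta+1-\frac1p}[f]}|Q|^{s}\,{\bf 1}_Q(x)
=\sum_{k\ge0}\bigl(2^{nk}|Q_x|\bigr)^{s}
=|Q_x|^{s}\sum_{k\ge0}2^{nks}
\le\frac{1}{1-2^{n s}}\,|Q_x|^{s},
\end{align*}
which is finite and of the desired form precisely because $ns<0$ in the relevant case; in the opposite sign case one chooses $Q_x$ at the other end of the chain so that again the ratio is $<1$. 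Collecting the constant $C_{(n,p,\beta,r)}=(1-2^{ns})^{-1}$ gives \eqref{eq2030}. I would present the argument symmetrically by letting $Q_x$ be whichever of the two extreme cubes makes $|Q_x|^{s}$ the \emph{largest} term, reducing both cases to a single summable geometric series.
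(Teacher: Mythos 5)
There is a genuine gap at exactly the step you flag as ``the main obstacle'': you never establish that the chain $\{Q\in\mathcal{D}^\alpha_{\lambda,\beta+1-\frac1p}[f]:\ x\in Q\}$ has an extreme element of the correct kind, and the oscillation estimate you propose to use for this is wrong. Because of the renormalization $|Q|^{-1-\frac1n}$ in the definition of $\omega_Q(f)$, the Lipschitz/Newton--Leibniz bound gives
\begin{align*}
\omega_Q(f)\le\sqrt{n}\int_Q|\nabla f(z)|\,dz\le\sqrt{n}\,\|\,|\nabla f|\,\|_{L^\infty(\mathbb{R}^n)}\,|Q|,
\end{align*}
that is, the power is $|Q|^{1}$, not $|Q|^{\frac1n}$ as you wrote. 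This is not a cosmetic slip: combining the correct bound with the level-set condition $\omega_Q(f)>\lambda|Q|^{\beta+1-\frac1p}$ yields $\lambda|Q|^{\beta-\frac1p}<\sqrt{n}\,\|\,|\nabla f|\,\|_{L^\infty(\mathbb{R}^n)}$, which bounds $|Q|$ from \emph{below} precisely when $\beta<\frac1p$ and from \emph{above} precisely when $\beta>\frac1p$ --- so the dichotomy is governed by the sign of $\beta-\frac1p$, exactly matching the sign of the exponent $r(\beta-\frac1p)$ in your geometric series, and no compact-support or ``oscillation essentially constant for large $Q$'' argument is needed. With your exponent $\frac1n$ the case split would instead occur at $\beta+1-\frac1p=\frac1n$; e.g.\ for $n\ge2$ and $\beta\in(\frac1p-1+\frac1n,\frac1p)$ your criterion does not exclude arbitrarily small cubes from the level set, yet $r(\beta-\frac1p)<0$ forces you to sum a divergent geometric series upward from a smallest cube that your argument has not produced. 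Since you explicitly defer this step rather than resolve it, the proof is incomplete as written.

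The rest of your outline is sound and coincides with the paper's argument: by the dyadic nestedness of $\mathcal{D}^\alpha$ there is at most one cube per scale containing $x$, so once the minimal cube (case $\beta<\frac1p$) or maximal cube (case $\beta>\frac1p$) $Q_x$ exists, the sum in \eqref{eq2030} is dominated by $|Q_x|^{r(\beta-\frac1p)}\sum_{k\in\mathbb{Z}_+}2^{knr(\beta-\frac1p)}$ or its decreasing analogue, which converges with a constant depending only on $n$, $p$, $\beta$, and $r$. Replacing your heuristic oscillation bound by the displayed inequality above closes the gap and turns your proposal into essentially the paper's proof.
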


\begin{proof}
Fix a dyadic grid $\mathcal{D}$ and
$x\in\bigcup_{Q\in\mathcal{D}_{\lambda,\beta+1-\frac1p}[f]}Q$.
Applying \cite[(2.21)]{cddd03}, we obtain,
for any $Q\in\mathcal{Q}$,
\begin{align}\label{esti-omegae5}
\omega_Q(f)\le\sqrt{n}\int_{Q}\left|\nabla f(x)\right|\,dx.
\end{align}
Using this and the assumption $|\nabla f|\in C_{\rm c}$,
we conclude that, for any $Q\in \mathcal{D}
_{\lambda,\beta+1-\frac1p}[f]$,
\begin{align}\label{esti-omegae70}
\lambda|Q|^{\beta+1-\frac1p}
<\omega_Q(f)
\le\sqrt{n}\left\|\,\left|\nabla f\right|\,\right\|_{L^\infty}|Q|.
\end{align}

Now, we consider the following two cases for
$\beta\in \mathbb{R}\setminus \{\frac1p\}$.

\emph{Case 1: $\beta\in (-\infty, \frac{1}{p})$.} In this case,
from \eqref{esti-omegae70}, we deduce that, for any $Q\in \mathcal{D}
_{\lambda,\beta+1-\frac1p}[f]$,
$|Q|> [\sqrt{n}\lambda^{-1}
\|\,|\nabla f|\,\|_{L^\infty}]^{\frac{1}{\beta-\frac1p}}>0.$
Therefore, by Definition \ref{df-dyadic}(ii), we find that there exists a unique cube
$Q_x\in \mathcal{D}_{\lambda,\beta+1-\frac1p}[f]$ that is minimal with
respect to the set inclusion such that $x\in Q_x$; moreover,
for any $k\in\mathbb{Z}_+$,
\begin{align*}
&\sharp\left\{Q\in\mathcal{D}_{\lambda,
\beta+1-\frac1p}[f]:
Q\supset Q_x,\ |Q|=2^{kn}|Q_x|\right\}
\le 1,
\end{align*}
here and thereafter, $\sharp E$ denotes the cardinality
of the set $E$.
Applying these and the assumption
$\beta\in(-\infty,\frac1p)$ again,
we conclude that
\begin{align}\label{pointe8+}
\sum_{Q\in\mathcal{D}_{\lambda,\beta+1-\frac1p}[f]}
\left|Q\right|^{r(\beta-\frac1p)}
{\bf 1}_Q(x)
&=\sum_{k\in\mathbb{Z}_+}
\sum_{\{Q\in\mathcal{D}_{\lambda,
\beta+1-\frac1p}[f]:Q\supset Q_x,\,|Q|=2^{kn}|Q_x|\}}
\left|Q\right|^{r(\beta-\frac1p)}\notag\\
&\le\left|Q_x\right|^{r(\beta-\frac1p)}
\sum_{k\in\mathbb{Z}_+}2^{knr(\beta-\frac1p)}
\sim\frac{1}{\frac1p-\beta}\left|Q_x\right|^{r(\beta-\frac1p)},
\end{align}
where the implicit positive constant depends only on $n$,
$p$, and $r$.
This further implies that \eqref{eq2030} holds in this case.

\emph{Case 2: $\beta\in (\frac{1}{p},\infty)$.}
In this case, by \eqref{esti-omegae70}, we find that, for any
$Q\in \mathcal{D}_{\lambda,\beta+1-\frac1p}[f]$,
$
|Q|<[\sqrt{n}\lambda^{-1}
\|\,|\nabla f|\,\|_{L^\infty}
]^{\frac{1}{\beta-\frac1p}}<\infty.
$
Using this and Definition \ref{df-dyadic}(ii),
we conclude that there exists a unique cube
$Q_x\in \mathcal{D}_{\lambda,\beta+1-\frac1p}[f]$ that is
maximal with respect to the set inclusion such that
$x\in Q_x$; moreover,
for any $k\in\mathbb{Z}_+$,
\begin{align*}
&\sharp\left\{Q\in\mathcal{D}_{\lambda,
\beta+1-\frac1p}[f]:
x\in Q\subset Q_x, |Q|=2^{-kn}|Q_x|\right\}
\le 1.
\end{align*}
Applying these, the assumption
$\beta\in(\frac1p,\infty)$,
and an argument similar to that used
in \eqref{pointe8+}, we obtain
\eqref{eq2030} holds also in this case.
This finishes the proof of Lemma \ref{lem-sd}.
\end{proof}

\begin{proof}[Proof of Proposition \ref{es-dyadic2}]
Fix $\beta\in[\frac1p-1,\frac1p)$,
a dyadic grid $\mathcal{D}$, $\upsilon\in A_p$,
and $f\in C^\infty$ with $|\nabla f|\in C_{\rm c}$.
Then, by Tonelli's
theorem and Lemma \ref{lem-sd} with $r: = p$,
we obtain, for any $\lambda\in(0,\infty)$,
\begin{align}\label{eq2317}
\sum_{Q\in\mathcal{D}_{\lambda,\beta+1-\frac1p}[f]}
|Q|^{\beta p-1}\upsilon(Q)
& = \int_{\mathbb{R}^n}\sum_{Q\in\mathcal{D}_{\lambda,\beta+1-\frac1p}[f]}
|Q|^{\beta p-1}\upsilon(x){\bf 1}_{Q}(x)\,dx\notag\\
&\lesssim\frac{1}{\frac1p-\beta}
\int_{\bigcup_{Q\in\mathcal{D}_{\lambda,\beta+1-\frac1p}[f]}Q}
|Q_x|^{\beta p -1}\upsilon(x)\,dx,
\end{align}
where, for any
$x\in \bigcup_{Q\in\mathcal{D}_{\lambda,\beta+1-\frac1p}[f]}Q$,
$Q_x$ is as in Lemma \ref{lem-sd}.
From \eqref{esti-omegae70},
we infer that, for any $\lambda\in(0,\infty)$ and
$x\in \bigcup_{Q\in\mathcal{D}_{\lambda,\beta+1-\frac1p}[f]}Q$,
$\lambda|Q_x|^{\beta - \frac1p}\le \sqrt{n} M(|\nabla f|)(x),$
which, combined with \eqref{eq2317}, further implies that
\begin{align*}
\lambda^p\sum_{Q\in\mathcal{D}^\alpha_{\lambda,\beta+1-\frac1p}[f]}
|Q|^{\beta p-1}\upsilon(Q)
\lesssim\frac{1}{\frac1p-\beta}
\int_{\mathbb{R}^n}\left[M(|\nabla f|)(x)
\right]^{p}\upsilon(x)\,dx.
\end{align*}
Using this and Lemma \ref{ApProperty}(v),
we find that Proposition \ref{es-dyadic2} holds when $f\in C^\infty$
satisfying $|\nabla f|\in C_{\rm c}$.
Moreover, let $f\in\dot{W}^{1,p}_\upsilon$.
Applying \cite[Theorem 2.6]{dlyyz.arxiv},
we conclude that there exists
a sequence $\{f_k\}_{k\in\mathbb{N}}$ in
$\{g\in C^\infty:|\nabla g|\in C_{\rm c}\}$ such that,
for any $R\in(0,\infty)$,
$\|(f_k-f){\bf1}_{B({\bf0},R)}\|_{L^p_\upsilon}\to0$
and $\|f_k-f\|_{\dot{W}^{1,p}_\upsilon}\to0$ as $k\to\infty$.
Therefore, from these and Fatou's lemma, we
infer that the present proposition also hold
when $f\in\dot{W}^{1,p}_\upsilon$,
which completes the proof of Proposition \ref{es-dyadic2}.
\end{proof}

Next, we consider the case
$\beta\in(-\infty,\frac1p-1)\cup(\frac1p,\infty)$ in Theorem \ref{thm-cddd}.
To this end, for any
dyadic grid $\mathcal{D}$, any $\lambda\in(0,\infty)$,
and any $b\in\mathbb{R}$, define
$\mathcal{E}_{\lambda,b}:=\{Q\in\mathcal{D}:
\fint_Q|f(x)|\,dx>\lambda|Q|^b\}.$
The following estimate for $\mathcal{E}_{\lambda,b}$
plays an essential role in the proof of
Theorem \ref{thm-cddd} when $\beta\in(-\infty,\frac1p-1)\cup(\frac1p,\infty)$.

\begin{proposition}\label{es-dyadic1}
If $p\in[1,\infty)$ and $\beta\in(-\infty,\frac1p-1)\cup(\frac1p,\infty)$,
then there exists a positive constant $C$ such that,
for any dyadic grid $\mathcal{D}$,
any $\upsilon\in A_p$, and any $f\in L^\infty$ with compact support,
\begin{align*}
\sup_{\lambda\in(0,\infty)}\lambda^p
\sum_{Q\in\mathcal{E}_{\lambda,\beta-\frac1p}[f]}
|Q|^{\beta p-1}\upsilon(Q)
\le C[\upsilon]_{A_p}\|f\|_{L^p_\upsilon}^p.
\end{align*}
\end{proposition}

To prove this proposition,
we employ the good cube method
pioneered by Cohen et al.\ in \cite{cddd03}.
The following concepts of good and bad
cubes were originally introduced in the
unweighted setting in
\cite[Definition 4.2]{cddd03}.

\begin{definition}\label{df-good}
Let $\mathcal{D}$ be a dyadic grid and
$\mathcal{S}\subset\mathcal{D}$.
Assume that $\sigma\in\mathbb{R}$ and $\upsilon$
is a nonnegative locally integrable function on
$\mathbb{R}^n$.
A cube $Q\in\mathcal{S}$ is said to be \emph{$(\sigma,\upsilon)$-good}
in $\mathcal{S}$ if $Q$ is minimal in $\mathcal{S}$ or,
for any collection $\mathcal{P}\subset\mathcal{S}$
of pairwise disjoint cubes strictly contained in
$Q$,
$$\sum_{P\in\mathcal{P}}|P|^{\sigma-1}\upsilon(P)
\le |Q|^{\sigma-1}\upsilon(Q).$$
A cube $Q\in\mathcal{S}$ is said to be \emph{$(\sigma,\upsilon)$-bad}
in $\mathcal{S}$ if it is not $(\sigma,\upsilon)$-good.
The set of all $(\sigma,\upsilon)$-good [resp.\ $(\sigma,\upsilon)$-bad]
cubes in $\mathcal{S}$
is denoted by $\mathcal{G}_{\sigma,\upsilon}(\mathcal{S})$
[resp.\ $\mathcal{B}_{\sigma,\upsilon}(\mathcal{S})$].
\end{definition}

The following
domination properties of good cubes,
whose core idea stems from Cohen et al.\ \cite{cddd03},
are important to the proofs of Propositions \ref{es-dyadic1}
and \ref{cddd-suff}
(and hence Theorem \ref{thm-cddd}) and also of independent interest.

\begin{lemma}\label{lem-good}
Let $\mathcal{D}$ be a dyadic grid and $\mathcal{S}\subset\mathcal{D}$.
Assume that $\sigma\in\mathbb{R}$ and $\upsilon$
is a nonnegative locally integrable function.
\begin{enumerate}
  \item[{\rm(i)}] If there exists a collection
  of cubes in $\mathcal{S}$ that is minimal with respect to the set inclusion,
  then, for any $\gamma\in(-\infty,\sigma)$,
  \begin{align*}
  \sum_{Q\in\mathcal{S}}|Q|^{\gamma-1}\upsilon(Q)
  \le\frac{1}{1-2^{n(\gamma-\sigma)}}
  \sum_{Q\in\mathcal{G}_{\sigma,\upsilon}(\mathcal{S})}|Q|^{\gamma-1}
  \upsilon(Q).
  \end{align*}
  \item[{\rm(ii)}] If $F\subset\mathcal{G}_{\sigma,\upsilon}(\mathcal{S})$
  is a collection of disjoint cubes and $E\subset\mathcal{G}_{\sigma,\upsilon}(\mathcal{S})$
  satisfies that, for any $P\in E$, there exists
  $Q\in F$ such that $P\subset Q$,
  then, for any $\alpha\in(\sigma,\infty)$,
  \begin{align*}
  \sum_{Q\in E}|Q|^{\alpha-1}\upsilon(Q)\le\frac{1}{1-2^{n(\sigma-\alpha)}}
  \sum_{Q\in F}|Q|^{\alpha-1}\upsilon(Q).
  \end{align*}
\end{enumerate}
\end{lemma}

\begin{proof}
We first prove (i). Indeed,
similar to an
argument that used in \cite[p.\,252]{cddd03},
by the definitions of both $(\sigma,\upsilon)$-good and
$(\sigma,\upsilon)$-bad cubes and the
existence of minimal cubes in $\mathcal{S}$,
we obtain,
for any $Q\in\mathcal{B}_{\sigma,\upsilon}(\mathcal{S})$,
\begin{align*}
|Q|^{\sigma-1}\upsilon(Q)
\le\sum_{\{P\in\mathcal{G}_{\sigma,\upsilon}(\mathcal{S}):
P\subsetneqq Q\}}|P|^{\sigma-1}\upsilon(P).
\end{align*}
Therefore, for any $\gamma\in(-\infty,\sigma)$ and
$Q\in\mathcal{B}_{\sigma,\upsilon}(\mathcal{S})$,
\begin{align*}
|Q|^{\gamma-1}\upsilon(Q)
&\le|Q|^{\gamma-\sigma}
\sum_{\{P\in\mathcal{G}_{\sigma,\upsilon}(\mathcal{S}):
P\subsetneqq Q\}}|P|^{\sigma-1}\upsilon(P)
\le |Q|^{\gamma-\sigma}\sum_{j\in\mathbb{N}}
\sum_{P\in\mathcal{G}_{\sigma,\upsilon}(\mathcal{S})
\cap\mathcal{D}_j(Q)}|P|^{\sigma-1}\upsilon(P)\\
&\le\sum_{j\in\mathbb{N}}2^{jn(\gamma-\sigma)}
\sum_{P\in\mathcal{G}_{\sigma,\upsilon}(\mathcal{S})
\cap\mathcal{D}_j(Q)}|P|^{\gamma-1}\upsilon(P).
\end{align*}
From this and Tonelli's theorem, we infer that,
for any $\gamma\in(-\infty,\sigma)$, $\gamma-\sigma\in(-\infty,0)$ and
hence
\begin{align*}
\sum_{Q\in\mathcal{B}_{\sigma,\upsilon}(\mathcal{S})}
|Q|^{\gamma-1}\upsilon(Q)
&\le \sum_{P\in\mathcal{G}_{\sigma,\upsilon}(\mathcal{S})}
|P|^{\gamma-1}\upsilon(P)
\sum_{j\in\mathbb{N}}
\sum_{\{Q\in\mathcal{B}_{\sigma,\upsilon}(\mathcal{S}):
P\in\mathcal{D}_j(Q)\}}2^{jn(\gamma-\sigma)}\\
&=\frac{2^{n(\gamma-\sigma)}}{1-2^{n(\gamma-\sigma)}}
\sum_{P\in\mathcal{G}_{\sigma,\gamma}(\mathcal{S})}
|P|^{\gamma-1}\upsilon(P).
\end{align*}
This, together with the fact that
$\mathcal{S}$ is the disjoint union of both $\mathcal{G}_{\sigma,\upsilon}(\mathcal{S})$
and $\mathcal{B}_{\sigma,\upsilon}(\mathcal{S})$, finishes the proof of (i).

Next, we show (ii). Notice that,
for any $Q\in F$ and $j\in\mathbb{N}$,
$E\cap\mathcal{D}_j(Q)$
is a collection of pairwise disjoint cubes strictly
contained in $Q$. Applying this and the definition of
$(\sigma,\upsilon)$-good cubes, we conclude that,
for any $\alpha\in(\sigma,\infty)$, $Q\in F$, and $j\in\mathbb{Z}_+$,
\begin{align*}
\sum_{P\in E\cap\mathcal{D}_j(Q)}|P|^{\alpha-1}\upsilon(P)
=2^{jn(\sigma-\alpha)}|Q|^{\alpha-\sigma}
\sum_{P\in E\cap\mathcal{D}_j(Q)}
|P|^{\sigma-1}\upsilon(P)\le 2^{jn(\sigma-\alpha)}
|Q|^{\alpha-1}\upsilon(Q).
\end{align*}
Consequently, from this and the assumptions on both
$E$ and $F$, it follows that, for any $\alpha\in(\sigma,\infty)$,
\begin{align*}
\sum_{Q\in E}|Q|^{\alpha-1}\upsilon(Q)
&\le\sum_{Q\in F}\sum_{j\in\mathbb{Z}_+}
\sum_{P\in E\cap\mathcal{D}_{j}(Q)}
|P|^{\alpha-1}\upsilon(P)\\
&\le \sum_{Q\in F}|Q|^{\alpha-1}\upsilon(Q)\sum_{j\in\mathbb{Z}_+}
2^{jn(\sigma-\alpha)}
=\frac{1}{1-2^{n(\sigma-\alpha)}}\sum_{Q\in F}|Q|^{\alpha-1}\upsilon(Q),
\end{align*}
which completes the proof of (ii) and hence Lemma \ref{lem-good}.
\end{proof}

\begin{proof}[Proof of Proposition \ref{es-dyadic1}]
Fix a dyadic grid $\mathcal{D}$,
$\upsilon\in A_p$, $f\in L^\infty$ with compact support, and $\lambda\in(0,\infty)$.
We prove the present proposition by considering the following
two cases for $\beta$.

\emph{Case 1:} $\beta\in (\frac{1}{p},\infty)$.
In this case, for any $Q\in\mathcal{E}_{\lambda,\beta-\frac1p}[f]$,
it holds that
$$\lambda|Q|^{\beta-\frac1p}
<\fint_Q|f(x)|\,dx\le\|f\|_{L^\infty},$$
which implies $|Q|<[\lambda^{-1}\|f\|_{L^\infty}]^{\frac{1}{\beta-\frac1p}}$.
Using this and Definition \ref{df-dyadic}(ii),
we find that each cube $Q\in\mathcal{E}_{\lambda,\beta-\frac1p}[f]$ is contained in
a cube belonging to $\mathcal{E}_{\lambda,\beta-\frac1p}[f]$ that is
maximal with respect to the set inclusion;
denote by $\mathcal{E}^{\rm max}_{\lambda,\beta-\frac1p}[f]$
this set of maximal cubes.
Therefore, applying Definition \ref{df-dyadic}(ii) again
and the assumption $\beta\in(\frac1p,\infty)$,
we obtain
\begin{align}\label{es-dyadic1-e2}
\sum_{Q\in\mathcal{E}_{\lambda,\beta-\frac1p}[f]}
|Q|^{\beta p-1}\upsilon(Q)
&\le\sum_{Q\in\mathcal{E}^{\rm max}_{\lambda,\beta-\frac1p}[f]}
\sum_{j\in\mathbb{Z}_+}\sum_{P\in\mathcal{E}_{\lambda,
\beta-\frac1p}[f]\cap\mathcal{D}_j(Q)}|P|^{\beta p-1}
\upsilon(P)\notag\\
&=\sum_{Q\in\mathcal{E}^{\rm max}_{\lambda,\beta-\frac1p}[f]}
|Q|^{\beta p-1}\sum_{j\in\mathbb{Z}_+}
2^{-jn(\beta p-1)}\sum_{P\in\mathcal{E}_{\lambda,
\beta-\frac1p}[f]\cap\mathcal{D}_j(Q)}\upsilon(P)\notag\\
&\lesssim\sum_{Q\in\mathcal{E}^{\rm max}_{\lambda,\beta-\frac1p}[f]}
|Q|^{\beta p-1}\upsilon(Q).
\end{align}
On the other hand, from Lemma \ref{ApProperty}(iii),
we deduce that,
for any $Q\in\mathcal{E}_{\lambda,\beta-\frac1p}[f]$,
\begin{align}\label{es-dyadic1-e3}
\lambda^p|Q|^{\beta p-1}
<\left[\fint_Q|f(x)|\,dx\right]^p
\le \frac{[\upsilon]_{A_p}}{\upsilon(Q)}
\int_{Q}|f(x)|^p\upsilon(x)\,dx.
\end{align}
Combining this, \eqref{es-dyadic1-e2}, and the disjointness
of cubes in $\mathcal{E}^{\rm max}_{\lambda,\beta-\frac1p}[f]$,
we conclude that
\begin{align*}
\lambda^p\sum_{Q\in\mathcal{E}_{\lambda,\beta-\frac1p}[f]}
|Q|^{\beta p-1}\upsilon(Q)
\lesssim[\upsilon]_{A_p}\sum_{Q\in\mathcal{E}^{\rm max}_{\lambda,\beta-\frac1p}[f]}
\int_{Q}|f(x)|^p\upsilon(x)\,dx
\le[\upsilon]_{A_p}\|f\|_{L^p_\upsilon}^p.
\end{align*}
This then finishes the proof of Proposition \ref{es-dyadic1} in this case.

\emph{Case 2:} $\beta\in(-\infty,\frac1p-1)$. In this case,
choose $\sigma\in(\beta p,1-p)$ and, to simplify the
representation, we denote by $\mathcal{G}$
the set of all $(\sigma,\upsilon)$-good cubes in $\mathcal{E}_{\lambda,
\beta-\frac1p}[f]$
in the present proof. Thus,
by Lemma \ref{lem-good}(i) with $\gamma:=\beta p$
 and the definition
of $\mathcal{E}_{\lambda,\beta-\frac1p}[f]$,
we conclude that
\begin{align*}
\lambda^p\sum_{Q\in\mathcal{E}_{\lambda,
\beta-\frac1p}[f]}|Q|^{\beta p-1}\upsilon(Q)
&\lesssim\lambda^p\sum_{Q\in\mathcal{G}}
|Q|^{\beta p-1}\upsilon(Q)
\le\sum_{Q\in\mathcal{G}}\left[\fint_Q|f(x)|\,dx\right]^p
\upsilon(Q).
\end{align*}
This implies that, to complete the present proof,
we only need to show
\begin{align}\label{es-dyadic1-e4}
\sum_{Q\in\mathcal{G}}
\left[\fint_Q|f(x)|\,dx\right]^p
\upsilon(Q)\lesssim[\upsilon]_{A_p}\|f\|_{L^p_\upsilon}^p,
\end{align}
where the implicit positive constant
depends only on $p$, $n$, and $\sigma$.
To achieve this, we further classify the cubes $Q$
in $\mathcal{G}$ by the magnitude of $\fint_{Q}|f(x)|\,dx$.
Indeed, for any $k\in\mathbb{Z}$, define
\begin{align*}
E_k:=\left\{Q\in\mathcal{G}:2^{kn}<\fint_Q
|f(x)|\,dx\le 2^{(k+1)n}\right\}
\end{align*}
and
$F_k:=\{Q\in\mathcal{G}:Q\text{ is a maximal cube such that }
\fint_Q|f(x)|\,dx>2^{kn}\}.$
If $E_k$ is nonempty for some $k\in\mathbb{Z}$,
then, for any $Q\in E_k$,
$\fint_{Q}|f(x)|\,dx>2^{kn}$, which,
combined with the assumption $f\in L^\infty$ with compact support,
further implies that $|Q|<2^{-kn}\|f\|_{L^1}<\infty$.
Consequently, if $E_k$ is nonempty for some $k\in\mathbb{Z}$,
then $F_k$ is also nonempty and each cube in $E_k$ is
contained in a cube in $F_k$.
Using this, the disjointness of cubes in $F_k$,
the fact that $\sigma<1$, and Lemma \ref{lem-good}(ii)
with $\alpha:=1$,
we obtain
\begin{align*}
\sum_{Q\in\mathcal{G}}
\left[\fint_Q|f(x)|\,dx\right]^p\upsilon(Q)
\sim\sum_{k\in\mathbb{Z}}2^{knp}\sum_{Q\in E_k}\upsilon(Q)
\lesssim\sum_{k\in\mathbb{Z}}2^{knp}\sum_{Q\in F_k}\upsilon(Q).
\end{align*}
This, together with Fatou's lemma,
further implies that, to prove \eqref{es-dyadic1-e4},
it suffices to show that, for any $N\in\mathbb{N}$,
\begin{align}\label{es-dyadic1-e5}
\sum_{k=-N}^\infty 2^{knp}\sum_{Q\in F_k}\upsilon(Q)
\lesssim[\upsilon]_{A_p}\|f\|_{L^p_\upsilon}^p
\end{align}
with the implicit positive constant
depending only on $p$, $n$, and $\sigma$.
To this end, fix $k\in\mathbb{Z}$ such that $F_{k+1}\ne \emptyset$
and choose $P\in F_{k+1}$.
Then, from the definitions of $\{F_{k}\}_{k\in\mathbb{Z}}$,
we infer that
$\fint_{P}|f(x)|\,dx>2^{(k+1)n}>2^{kn}$ and hence
there exists $Q\in F_k$ such that $P\subset Q$.
Moreover, by the maximality of $Q$, we find that
$\fint_{Q}|f(x)|\,dx\le 2^{(k+1)n}$, which implies $Q\notin F_{k+1}$
and hence $P\subsetneqq Q$. Therefore, each cube in $F_{k+1}$
is strictly contained in some cube in $F_k$.
Applying this, the fact that
$\sigma<1$, and the definition of $(\sigma,\upsilon)$-good cubes,
we conclude that
\begin{align*}
\sum_{P\in F_{k+1}}\upsilon(P)
\le\sum_{Q\in F_k}2^{n(\sigma-1)}|Q|^{1-\sigma}\sum_{\{P\in F_{k+1}:P\subsetneqq Q\}}
|P|^{\sigma-1}\upsilon(P)
\le 2^{n(\sigma-1)}\sum_{Q\in F_k}\upsilon(Q).
\end{align*}
From this, the assumption $\sigma\in(\beta p,1-p)$,
\eqref{es-dyadic1-e3}, and the disjointness of cubes in $F_k$
for any given $k\in\mathbb{Z}$,
we deduce that, for any $N\in\mathbb{N}$,
\begin{align*}
\sum_{k=-N}^\infty2^{knp}\sum_{Q\in F_k}\upsilon(Q)
&\le\sum_{k=-N}^{\infty}2^{knp}2^{(k+N)n(\sigma-1)}
\sum_{Q\in F_{-N}}\upsilon(Q)
\sim2^{-Nnp}\sum_{Q\in F_{-N}}\upsilon(Q)\\
&<\sum_{Q\in F_{-N}}\left[\fint_Q|f(x)|\,dx\right]^p
\upsilon(Q)\\
&\le[\upsilon]_{A_p}
\sum_{Q\in F_{-N}}\int_Q|f(x)|^p\upsilon(x)\,dx
\le [\upsilon]_{A_p}\|f\|_{L^p_\upsilon}^p.
\end{align*}
This implies \eqref{es-dyadic1-e5} and hence
\eqref{es-dyadic1-e4}, which then completes the proof of
Proposition \ref{es-dyadic1}.
\end{proof}

Finally, we use the
good cube method to
deal with the third case of Theorem \ref{thm-cddd} as follows,
namely $p=1$, $n\in\mathbb{N}\cap[2,\infty)$,
and $\beta\in(-\infty,1-\frac1n)$.

\begin{proposition}\label{cddd-suff}
If $n\in\mathbb{N}\cap[2,\infty)$ and $\beta\in(-\infty,
1-\frac1n)$,
then there exists a positive constant $C$ such that,
for any dyadic grid $\mathcal{D}$, any $\upsilon\in A_1$,
and any $f\in C^\infty$ satisfying
$|\nabla f|\in C_{\rm c}$,
\begin{align*}
\sup_{\lambda\in(0,\infty)}\lambda
\sum_{Q\in\mathcal{D}_{\lambda,\beta}[f]}
|Q|^{\beta-1}\upsilon(Q)\le
C[\upsilon]_{A_1}
\|\,|\nabla f|\,\|_{L^1_\upsilon}.
\end{align*}
\end{proposition}

\begin{proof}
Fix a dyadic grid $\mathcal{D}$, $\upsilon\in A_1$,
$\lambda\in(0,\infty)$,
and $f\in C^\infty$ with
$|\nabla f|\in C_{\rm c}$ and choose
$\sigma\in(\beta,1-\frac1n)$.
In the present proof,
we simply denote by $\mathcal{G}$
the set of all $(\sigma,\upsilon)$-good cubes in
$\mathcal{D}_{\lambda,\beta}[f]$.
Then, from Lemma \ref{lem-good}(i) and the definition of
$\mathcal{D}_{\lambda,\beta}[f]$, we deduce that
\begin{align}\label{esti-omegae1}
\lambda\sum_{Q\in\mathcal{D}_{\lambda,\beta}[f]}
|Q|^{\beta-1}\upsilon(Q)\lesssim\lambda
\sum_{Q\in\mathcal{G}}|Q|^{\beta-1}
\upsilon(Q)\le\sum_{Q\in\mathcal{G}}\omega_Q(f)
\frac{\upsilon(Q)}{|Q|}.
\end{align}
On the other hand, since $n\in\mathbb{N}\cap[2,\infty)$,
there exists a constant $C_f$ such that
$f-C_f\in C_{\rm c}$.
This implies that, if, for any $t\in\mathbb{R}$,
let $E_t:=\{x\in\mathbb{R}^n:f(x)-C_f> t\}$,
then, for any $Q\in\mathcal{G}$ and $x\in Q$,
\begin{align*}
f(x)=C_f+\left[f(x)-C_f\right]=C_f-\left\|f-C_f\right\|_{L^\infty(Q)}
+\int_{-\|f-C_f\|_{L^\infty(Q)}}^{\infty}
{\bf 1}_{E_t}(x)\,dt.
\end{align*}
Using this, we obtain, for any $Q\in\mathcal{G}$
and $x,y\in Q$,
\begin{align*}
|f(x)-f(y)|=\left|\int_{-\|f-C_f\|_{L^\infty(Q)}}^{\infty}
\left[{\bf 1}_{E_t}(x)-{\bf 1}_{E_t}(y)\right]\,dt\right|\le\int_{-\infty}^{\infty}
\left|{\bf 1}_{E_t}(x)-{\bf 1}_{E_t}(y)\right|\,dt.
\end{align*}
Thus, from this and Tonelli's theorem, we further infer that,
for any $Q\in\mathcal{G}$,
\begin{align}\label{esti-omegae2}
\omega_Q(f)\le\int_{-\infty}^{\infty}
\omega_Q({\bf 1}_{E_t})\,dt.
\end{align}
To complete the proof of the present proposition, it suffices to
show that, for any domain $E\subset\mathbb{R}^n$
with smooth boundary satisfying $\min\{|E|,|E^\complement|\}<\infty$,
\begin{align}\label{esti-omegae3}
\sum_{Q\in\mathcal{G}}\omega_Q({\bf 1}_E)
\frac{\upsilon(Q)}{|Q|}\lesssim[\upsilon]_{A_1}
\int_{\partial E}\upsilon(x)\,d\mathcal{H}^{n-1}(x),
\end{align}
where $\mathcal{H}^{n-1}$ denotes the $(n-1)$-dimensional Hausdorff
measure and the implicit positive constant depends only on $n$ and $\sigma$.
Indeed, if we assume that \eqref{esti-omegae3} holds for the moment,
then, by Sard's theorem (see, for instance, \cite[Theorem 6.10]{l13})
and \cite[Example 1.32]{l13}, we find that, for almost every $t\in(0,\infty)$,
$E_t$ is a domain with smooth boundary, $\partial E_t
=\{x\in\mathbb{R}^n:f(x)-C_f=t\}$, and
$$\min\left\{|E_t|,\left|E_t^\complement\right|\right\}
\le\frac{1}{t}\left\|f-C_f\right\|_{L^1}<\infty.$$
Combining these, \eqref{esti-omegae1}, \eqref{esti-omegae2},
Tonelli's theorem,
\eqref{esti-omegae3}, and the
famous coarea formula (see, for instance,
\cite[p.\,258]{f69}),
we further conclude that
\begin{align*}
\lambda\sum_{Q\in\mathcal{D}_{\lambda,\beta}[f]}
|Q|^{\beta-1}\upsilon(Q)
&\lesssim\int_{-\infty}^{\infty}\sum_{Q\in\mathcal{G}}
\omega_Q({\bf 1}_{E_t})\frac{\upsilon(Q)}{|Q|}\,dt
\lesssim[\upsilon]_{A_1}
\int_{-\infty}^{\infty}\int_{\partial E_t}\upsilon(x)
\,d\mathcal{H}^{n-1}(x)\,dt\\
&=[\upsilon]_{A_1}
\int_{-\infty}^{\infty}\int_{\{y\in\mathbb{R}^n:
f(y)-C_f=t\}}\upsilon(x)
\,d\mathcal{H}^{n-1}(x)\,dt\\
&=[\upsilon]_{A_1}
\int_{\mathbb{R}^n}\left|\nabla f(x)\right|
\upsilon(x)\,dx.
\end{align*}
This further
implies that the present proposition holds.

Therefore, we now only need to prove \eqref{esti-omegae3}.
To achieve this, for any $k\in \mathbb{N}$,
define
\begin{align*}
\mathcal{G}_{k}
:=\left\{Q\in\mathcal{G}:
\min\left\{|Q\cap E|,|Q\setminus E|\right\}
\in\left(2^{-k-1}|Q|,2^{-k}|Q|\right]\right\}.
\end{align*}
Then, for any given $k\in\mathbb{N}$ and any
$Q\in\mathcal{G}_{k}$, it holds that
\begin{align*}
|Q|<2^{k+1}\min\left\{|Q\cap E|,|Q\setminus E|\right\}
\le 2^{k+1}\min\left\{|E|,\left|E^\complement\right|\right\}
<\infty,
\end{align*}
which further implies that
each cube $Q\in\mathcal{G}_{k}$ is contained
in a cube in $\mathcal{G}_{k}$ that is
maximal with respect to the set inclusion.
For any $k\in\mathbb{N}$,
denote by  $\mathcal{G}_{k}^{\rm max}$
the set of all maximal cubes in $\mathcal{G}_{k}$.
Thus, from the definitions of $\{\mathcal{G}_k\}_{k\in\mathbb{N}}$
and Lemma \ref{lem-good}(ii) with $\alpha:=1-\frac1n$,
it follows that
\begin{align}\label{esti-omegae4}
\sum_{Q\in\mathcal{G}}\omega_Q({\bf 1}_E)
\frac{\upsilon(Q)}{|Q|}
&=2\sum_{Q\in\mathcal{G}}|Q|^{-1-\frac1n}
\int_{Q\cap E}\int_{Q\setminus E}\,dx\,dy\frac{\upsilon(Q)}{|Q|}\notag\\
&\le2\sum_{Q\in\mathcal{G}}|Q|^{-1-\frac1n}
\min\left\{|Q\cap E|,|Q\setminus E|\right\}\upsilon(Q)\notag\\
&\sim\sum_{k\in\mathbb{N}}2^{-k}\sum_{Q\in\mathcal{G}_k}
|Q|^{-\frac1n}\upsilon(Q)
\lesssim\sum_{k\in\mathbb{N}}2^{-k}\sum_{Q\in\mathcal{G}_k^{\rm max}}
|Q|^{-\frac1n}\upsilon(Q).
\end{align}
On the other hand, for any $k\in\mathbb{N}$,
using the definition of $\mathcal{G}_{k}$ again
and the isoperimetric inequality
(see, for instance, \cite[Theorem 2.3]{cddd03}),
we conclude that, for any $Q\in\mathcal{G}_{k}$,
\begin{align*}
|Q|\sim 2^k
\min\left\{|Q\cap E|,|Q\setminus E|\right\}
\lesssim 2^k\left[\mathcal{H}^{n-1}
\left(\partial E\cap Q\right)\right]^{\frac{n}{n-1}}.
\end{align*}
Applying this, \eqref{esti-omegae4},
Lemma \ref{ApProperty}(i), and the disjointness
of cubes in $\mathcal{G}_{k}^{\rm max}$ for any $k\in\mathbb{N}$, we further
find that
\begin{align*}
\sum_{Q\in\mathcal{G}}\omega_Q({\bf 1}_E)
\frac{\upsilon(Q)}{|Q|}&\lesssim\sum_{k\in\mathbb{N}}
2^{-\frac kn}
\sum_{Q\in\mathcal{G}_k^{\rm max}}\mathcal{H}^{n-1}
(\partial E\cap Q)\frac{\upsilon(Q)}{|Q|}\\
&\le[\upsilon]_{A_1}\sum_{k\in\mathbb{N}}2^{-\frac kn}
\sum_{Q\in\mathcal{G}_k^{\rm max}}
\int_{\partial E\cap Q}\upsilon(x)\,d\mathcal{H}^{n-1}(x)\\
&\lesssim[\upsilon]_{A_1}
\int_{\partial E}\upsilon(x)\,d\mathcal{H}^{n-1}(x).
\end{align*}
This finishes the proof of \eqref{esti-omegae3} and hence
Proposition \ref{cddd-suff}.
\end{proof}

We now give the proof of Theorem \ref{thm-cddd}.

\begin{proof}[Proof of Theorem \ref{thm-cddd}]
Let $f\in C^\infty$ satisfy
$|\nabla f|\in C_{\rm c}$.
Then, using \eqref{esti-omegae5}, Lemma \ref{ApProperty}(iv),
and Propositions
\ref{es-dyadic2}, \ref{es-dyadic1}, and \ref{cddd-suff},
we find that both \eqref{thm-cddd-e1}
and \eqref{thm-cddd-e2} hold for this $f$.
Combining this and an argument similar to that
used in the proof of Proposition \ref{es-dyadic2},
we conclude that \eqref{thm-cddd-e1}
and \eqref{thm-cddd-e2} also hold
when $f\in\dot{W}^{1,p}_\upsilon$.
This then finishes the proof of
Theorem \ref{thm-cddd}.
\end{proof}

Next, we prove that the dependences
on weight constants in Theorem \ref{thm-cddd} are sharp by using
the shifted dyadic grids $\{\mathcal{D}^\alpha\}_{\alpha\in\{0,\frac13,\frac23\}^n}$
presented in Example \ref{e-dya}.
For any $\alpha\in\{0,\frac13,\frac23\}^n$,
$b\in\mathbb{R}$, $\lambda\in(0,\infty)$, and $f\in L^1_{\loc}$,
define
\begin{align}\label{df-ab}
\mathcal{D}^{\alpha}_{\lambda,b}[f]
:=\left\{Q\in\mathcal{D}^{\alpha}:
\omega_Q(f)>\lambda|Q|^{b}\right\}.
\end{align}
Then the following proposition shows the sharpness
of weight constants in Theorem \ref{thm-cddd}.

\begin{proposition}\label{sharp}
Let $p\in[1,\infty)$.
\begin{enumerate}
  \item[{\rm(i)}] If there exist $\beta,\gamma\in\mathbb{R}$
  and $C\in(0,\infty)$ such that, for any $\upsilon\in A_1(\mathbb{R})$
  and $f\in \dot{W}^{1,p}_\upsilon(\mathbb{R})$,
  \begin{align}\label{sharp-e1}
  \sup_{\lambda\in(0,\infty)}\lambda^p
  \sum_{I\in\mathcal{D}^{0}_{\lambda,\beta+1-\frac1p}[f]}
  |I|^{\beta p-1}\upsilon(I)\le C[\upsilon]_{A_1(\mathbb{R})}^\gamma
  \|f'\|_{L^p_\upsilon(\mathbb{R})}^p,
  \end{align}
  then $\gamma\ge 1$.
  \item[{\rm(ii)}] If $p\in(1,\infty)$ and there exist
  $\gamma\in\mathbb{R}$ and $C\in(0,\infty)$ such that, for any
  $\upsilon\in A_p(\mathbb{R})$ and $f\in\dot{W}^{1,p}_\upsilon(\mathbb{R})$,
  \begin{align}\label{sharp-e2}
  \sup_{\lambda\in(0,\infty)}\lambda^p
  \sum_{I\in\mathcal{D}^{\frac13}_{\lambda,0}[f]}
  |I|^{-p}\upsilon(I)\le C[\upsilon]_{A_p(\mathbb{R})}^\gamma
  \|f'\|_{L^p_\upsilon(\mathbb{R})}^p,
  \end{align}
  then $\gamma\ge\frac{p}{p-1}$.
  \item[{\rm(iii)}]
  If $p\in(1,\infty)$, $\beta_0\in(\frac{1}{p}-1,\frac1p)$,
  and there exist $\gamma\in\mathbb{R}$ and
  $C\in(0,\infty)$ such that, for any $\beta\in(\frac1p-1,\beta_0)$,
  $\upsilon\in A_p(\mathbb{R})$, and
  $f\in \dot{W}^{1,p}_\upsilon(\mathbb{R})$,
  \begin{align}\label{sharp-e6}
  \sup_{\lambda\in(0,\infty)}\lambda^p
  \sum_{I\in\mathcal{D}^{\frac13}_{\lambda,\beta+1-\frac1p}[f]}
  |I|^{\beta p-1}\upsilon(I)\le C[\upsilon]_{A_p(\mathbb{R})}^\gamma
  \|f'\|_{L^p_\upsilon(\mathbb{R})}^p,
  \end{align}
  then $\gamma\ge \frac{p}{p-1}$.
\end{enumerate}
\end{proposition}

\begin{proof}
We first prove (i). To this end, fix
$\delta\in(0,1)$ and,
for any $x\in\mathbb{R}$, define
$\upsilon(x):=|x-\frac12|^{\delta-1}$.
Then, using \cite[Example 7.1.7]{g2014}, we find that
$\upsilon\in A_1(\mathbb{R})$ and $[\upsilon]_{A_1(\mathbb{R})}\sim\frac1\delta$
with the positive equivalence constants independent of $\delta$
(see also \cite[p.\,26]{c25}).
On the other hand, choose $f\in C_{\rm c}^\infty(\mathbb{R})$
satisfying ${\bf 1}_{(0,1)}\le f\le {\bf 1}_{(-1,2)}$.
Then
\begin{align}\label{sharp-e3}
\|f'\|_{L^p_\upsilon(\mathbb{R})}^p
=\int_{(-1,2)\setminus(0,1)}|f'(x)|^p\upsilon(x)\,dx
\lesssim\|f'\|_{L^\infty(\mathbb{R})}^p,
\end{align}
which implies $f\in\dot{W}^{1,p}_\upsilon(\mathbb{R})$.
In addition, for the interval $I_0:=(0,4)$, it holds that
\begin{align*}
\omega_{I_0}(f)=\frac{1}{|I_0|^{2}}
\int_{I_0}\int_{I_0}|f(x)-f(y)|\,dx\,dy
>\frac1{16}\int_{0}^{1}\int_{3}^{4}\,dx\,dy
=4^{-\beta-3+\frac1p}|I_0|^{\beta+1-\frac1p}.
\end{align*}
Thus, $I_0\in\mathcal{D}^{0}_{4^{-\beta-3+\frac1p},\beta-\frac1p}[f]$.
Applying this, \eqref{sharp-e3}, the fact that
$[\upsilon]_{A_1(\mathbb{R})}\sim\frac1\delta$
with the positive equivalence constants independent of $\delta$,
and \eqref{sharp-e1}, we obtain
$$\delta^{-\gamma}\sim[\upsilon]_{A_1(\mathbb{R})}^\gamma
\gtrsim\upsilon(I_0)\ge\int_{\frac12}^{4}\left(x-\frac12\right)
^{\delta-1}\,dx\sim\delta^{-1},$$
where the implicit positive constants are independent of $\delta$.
This, combined with the arbitrariness of $\delta\in(0,1)$,
further implies that $\gamma\ge 1$, which completes the proof of (i).

Next, we show (ii). To achieve this, fix $\delta\in(0,\frac12)$
and let $\upsilon(x):=|x|^{(p-1)(1-\delta)}$
for any $x\in\mathbb{R}$. From \cite[Lemma 1.4]{b93},
we infer that $\upsilon\in A_p(\mathbb{R})$ and
$[\upsilon]_{A_p(\mathbb{R})}\sim\delta^{1-p}$,
where the positive equivalence constants are independent of
$\delta$.
Moreover, for any $x\in\mathbb{R}$, define
$f(x):=\int_{-\infty}^{x}t^{\delta-1}{\bf 1}_{(0,1)}(t)\,dt$.
Then the weak derivative $f'(x)=x^{\delta-1}{\bf 1}_{(0,1)}(x)$
for almost every $x\in\mathbb{R}$
and hence
\begin{align}\label{sharp-e4}
\|f'\|_{L^p_\upsilon(\mathbb{R})}^p
=\int_{0}^{1}x^{\delta-1}\,dx=\frac1\delta.
\end{align}
Therefore, $f\in\dot{W}^{1,p}_\upsilon(\mathbb{R})$.
We now claim that $\{I_j\}_{j=2}^\infty:=\{[-\frac{2^{2j-1}}{3},
\frac{2^{2j}}{3})\}_{j=2}^\infty
\subset\mathcal{D}^{\frac13}_{\frac{1}{9\delta},0}[f]$.
Indeed, for any $j\in\mathbb{N}\cap[2,\infty)$, it holds that
\begin{align*}
\omega_{I_j}(f)>|I_j|^{-2}\int_{1}^{\frac{2^{2j}}{3}}\int_{-\frac{2^{2j-1}}{3}}^{0}
\int_{0}^{1}t^{\delta-1}\,dt\,dx\,dy
=\frac{1}{\delta}\left(\frac{2^{2j}}{3}-1\right)
\frac{2^{2j-1}}{3}|I_j|^{-2}
>\frac{1}{9\delta},
\end{align*}
which implies the above claim. Using this,
the fact that $[\upsilon]_{A_p(\mathbb{R})}\sim\delta^{1-p}$
with the positive equivalence constants independent of
$\delta$, \eqref{sharp-e4}, \eqref{sharp-e2}, and
the assumption $p\in(1,\infty)$, we conclude that
\begin{align*}
\delta^{(1-p)\gamma-1}&\sim[\upsilon]_{A_p(\mathbb{R})}^\gamma
\|f'\|_{L^p_\upsilon(\mathbb{R})}^p
\gtrsim\delta^{-p}\sum_{j=2}^{\infty}2^{-2jp}\upsilon(I_j)\\
&\ge\delta^{-p}\sum_{j=2}^{\infty}2^{-2jp}\int_{0}^{\frac{2^{2j}}{3}}
x^{(p-1)(1-\delta)}\,dx\sim\delta^{-p}\sum_{j=2}^{\infty}2^{2j\delta(1-p)}
\sim\delta^{-p-1},
\end{align*}
where the implicit positive constants are independent of $\delta$.
From this and the arbitrariness of $\delta\in(0,\frac12)$,
we further deduce that $\gamma\ge\frac{p}{p-1}$.
This finishes the proof of (ii).

Finally, we prove (iii).
Indeed, for any $\beta\in(\frac1p-1,\beta_0)$, define
$\upsilon_\beta(x):=|x|^{(p-1)(\frac1p-\beta)}$ for any $x\in\mathbb{R}$ and
$f_\beta(y):=\int_{-\infty}^{y}t^{\beta-\frac1p}{\bf 1}_{(0,1)}(t)\,dt$
for any $y\in(0,\infty)$.
Then, from \cite[Lemma 1.4]{b93} and an argument similar to that used in
\eqref{sharp-e4}, it follows that,
for any $\beta\in(\frac1p-1,\beta_0)$,
$\upsilon_\beta\in A_p(\mathbb{R})$,
$[\upsilon_\beta]_{A_p(\mathbb{R})}\sim(\beta+1-\frac1p)^{1-p}$
with the positive equivalence constants depending only on $p$,
$f_\beta\in\dot{W}^{1,p}_{\upsilon_\beta}(\mathbb{R})$,
and
$\|f_\beta'\|_{L^p_{\upsilon_\beta}(\mathbb{R})}^p=(\beta+1-\frac1p)^{-1}$.
Define the
sequence of intervals $\{Q_j\}_{j\in\mathbb{N}}:=
\{(-\frac{2^{-2j-1}}{3},\frac{2^{-2j}}{3})\}_
{j\in\mathbb{N}}\subset\mathcal{D}^{\frac13}$.
Then, for any $\beta\in(\frac1p-1,\beta_0)$ and
$j\in\mathbb{N}$, it holds that
\begin{align*}
\omega_{Q_j}(f_\beta)>|Q_j|^{-2}
\int_{\frac{2^{-2j-1}}{3}}^{\frac{2^{-2j}}{3}}
\int_{-\frac{2^{-2j-1}}{3}}^{0}
\int_{0}^{\frac{2^{-2j-1}}{3}}t^{\beta-\frac1p}\,dt\,dx\,dy
\sim\left(\beta+1-\frac1p\right)^{-1}|Q_j|^{\beta+1-\frac1p},
\end{align*}
where the positive equivalence constants depend only on $p$
and $\beta_0$.
For any $\beta\in(\frac1p-1,\beta_0)$, combining this, \eqref{sharp-e6},
$\|f_\beta'\|_{L^p_{\upsilon_\beta}(\mathbb{R})}^p=(\beta+1-\frac1p)^{-1}$,
and $[\upsilon_\beta]_{A_p(\mathbb{R})}\sim(\beta+1-\frac1p)^{1-p}$
with the positive equivalence constants depending only on $p$,
we obtain
\begin{align*}
\left(\beta+1-\frac1p\right)^{(1-p)\gamma-1}&\sim
\left[\upsilon_\beta\right]_{A_p(\mathbb{R})}^\gamma
\left\|f_\beta'\right\|_{L^p_{\upsilon_\beta}(\mathbb{R})}^p
\gtrsim\left(\beta+1-\frac1p\right)^{-p}
\sum_{j\in\mathbb{N}}|Q_j|^{\beta p-1}\upsilon_\beta(Q_j)\\
&\ge\left(\beta+1-\frac1p\right)^{-p}
\sum_{j\in\mathbb{N}}2^{-2j(\beta p-1)}
\int_{0}^{\frac{2^{-2j}}{3}}x^{(p-1)(\frac1p-\beta)}\,dx\\
&\sim\left(\beta+1-\frac1p\right)^{-p}
\sum_{j\in\mathbb{N}}2^{-2j(\beta+1-\frac1p)}
\sim\left(\beta+1-\frac1p\right)^{-p-1},
\end{align*}
where the implicit positive constants
depend only on $p$ and $\beta_0$.
Using this and the arbitrariness of $\beta\in(\frac1p-1,\beta_0)$,
we conclude $\gamma\ge\frac{p}{p-1}$.
This finishes the proof of (iii) and hence
Proposition \ref{sharp}.
\end{proof}

\section{Almost Wavelet Characterization of $W^{1,1}_{\upsilon}$
and its Sharp Real Interpolation}\label{s-ip}

The main target of this section is to
establish an almost characterization of
the critical weighted Sobolev space
$W^{1,1}_{\upsilon}$ in terms of Daubechies wavelets.
As an application, we prove
the real interpolation between this weighted
Sobolev space and weighed Besov spaces with the sharp
smoothness exponent (Theorem \ref{t953}),
which is important to obtain Gagliardo--Nirenberg
inequalities in Section \ref{sec-gn}.

In what follows, we simply write $\mathcal{D}:=\mathcal{D}^{\bf 0}$.
Let $p\in[1,\infty]$, $I:=2^{-j}[k+[0,1)^n]\in\mathcal{D}$
for some $j\in\mathbb{Z}$ and $k\in\mathbb{Z}^n$,
and $g$ be a complex-valued function on $\mathbb{R}^n$. Define
$g_{I,p}(\cdot):=2^{\frac{jn}{p}}g(2^{j}\cdot-k).$
Notice that $g_{I,p}$ is $L^p$-normalized, that is,
$\|g_{I,p}\|_{L^p}=\|g\|_{L^p}$.

Now, we recall some notation
on the wavelet system (see, for example,
\cite{cddd03}). Take a univariate scaling function
$\varphi\in C_{\rm c}^N(\mathbb{R})$ and an associated univariate
wavelet function $\psi\in C_{\rm c}^{N}(\mathbb{R})$ for some
$N\in\mathbb{N}$ and define
$\psi^0:=\varphi$ and $\psi^1:=\psi$. For any $e\in \{0,1\}^n$
and $x:=(x_1,\ldots,x_n)\in\mathbb{R}^n$,
let $\psi^e(x):=\prod_{i=1}^{n}\psi^{e_i}(x_i)$.
Also, assume that, for any $e\in E:=\{0,1\}^n\setminus\{{\bf0}\}$
and $\alpha:=(\alpha_1,\ldots,\alpha_n)\in \mathbb{Z}_+^n$ with $|\alpha|
:=|\alpha_1|+\cdots+|\alpha_n|\le N$,
\begin{align}\label{e1937}
\int_{\mathbb{R}^n}x^\alpha \psi^e(x)\,dx=0.
\end{align}
Let $\mathcal{D}_+:=\bigcup_{j\ge 0}\mathcal{D}_j$.
It is well known that the $L^2$-normalized
wavelet system $\{\psi^e_{I,2}\}_{e\in E,I\in\mathcal{D}_+}
\cup\{\psi^{\bf 0}_{I,2}\}_{I\in\mathcal{D}_0}$
forms a complete orthonormal basis in $L^2$ and
$N$ is called the \emph{order} of this wavelet system.
In what follows, to ensure the compatibility with
the scaling properties of the critical Sobolev space,
we use the \emph{renormalized
wavelet system} $\{\psi^e_{I,n^\ast}\}_{e\in E,I\in\mathcal{D}_+}
\cup\{\psi^{\bf 0}_{I,n^\ast}\}_{I\in\mathcal{D}_0}$
and its \emph{dual system}
$\{\psi^e_{I,n}\}_{e\in E,I\in\mathcal{D}_+}
\cup\{\psi^{\bf 0}_{I,n}\}_{I\in\mathcal{D}_0}$,
where $n^*:=\frac{n}{n-1}$.
For simplicity of presentation,
let $\Omega_0:=\{{\bf0}\}\times\mathcal{D}_0$,
$\Omega_1:=E\times\mathcal{D}_+$, and $\Omega:=\Omega_0\cup\Omega_1$.
Moreover, for any $(e,I)\in\Omega$ and $p\in[1,\infty]$,
define $I_\omega:=I$ and
$\psi_{\omega,p}:=\psi^e_{I,p}$.
Then, for any suitable distribution $f\in\mathcal{S}'$, its
wavelet coefficient is defined to be
$\{\langle f,\psi_{\omega,n}\rangle\}_{\omega\in\Omega}$
whenever $\langle f,\psi_{\omega,n}\rangle$ for any $\omega\in\Omega$
is well-defined (see Proposition \ref{wavelet}).

We next introduce some weighted sequence spaces.
Let $\upsilon\in A_1$, $\beta\in\mathbb{R}$, and $p\in[1,\infty)$.
The \emph{sequence space} $\ell^p_{\beta,\upsilon}$
(resp.\ \emph{weak sequence space} $w\ell_{\beta,\upsilon}^1$)
is defined to be the set of all $a:=\{a_{\omega}\}_{\omega\in\Omega}$
in $\mathbb{C}$ such that
\begin{align*}
\|a\|_{\ell_{\beta,\upsilon}^{p}}:=
\left[\sum_{\omega\in\Omega}
\left|I_\omega\right|^{\beta-1}\upsilon
\left(I_\omega\right)
\left|a_\omega\right|^p
\right]^{\frac1p}<\infty
\end{align*}
\begin{align*}
\left[\text{resp.}\ \|a\|_{w\ell_{\beta,\upsilon}^1}:=
\sup_{\lambda\in(0,\infty)}\lambda
\sum_{\{\omega\in\Omega:|a_\omega|>\lambda\}}
\left|I_\omega\right|^{\beta-1}
\upsilon\left(I_\omega\right)<\infty\right].
\end{align*}

We now establish the following almost characterization
of $W^{1,1}_{\upsilon}$ in terms of wavelet expansions.

\begin{theorem}\label{p2143}
If the order $N$ of a renormalized
wavelet system
$\{\psi_{\omega,n^*}\}_{\omega\in\Omega}$ satisfies
$N>n+1$, $\upsilon\in A_1$,
and $\beta\in (-\infty,1-\frac1n)\cup(1,\infty)$, then, for any $f\in W^{1,1}_{\upsilon}$,
\begin{align}\label{e1453}
f=\sum_{\omega\in\Omega}\left\langle f,\psi_{\omega,n}\right\rangle
\psi_{\omega,n^*}
\end{align}
holds in $\mathcal{S}'$ and
\begin{align}\label{e2026}
\left\|\left\{\frac{\langle f,\psi_{\omega,n}\rangle}
{|I_\omega|^\beta}\right\}_{\omega\in\Omega}
\right\|_{w\ell^1_{\beta,\upsilon}}
\lesssim[\upsilon]_{A_1}\|f\|_{W^{1,1}_{\upsilon}}\lesssim[\upsilon]_{A_1}^2
\left\|\left\{\frac{\langle f,\psi_{\omega,n}\rangle}
{|I_\omega|^\beta}\right\}_{\omega\in\Omega}
\right\|_{\ell^1_{\beta,\upsilon}},
\end{align}
where, for any $\omega\in\Omega$,
$\langle f,\psi_{\omega,n}\rangle$
is defined by using the Lebesgue integral
between $f$ and $\psi_{\omega,n}$
and the implicit positive constants are
independent of both $[\upsilon]_{A_1}$ and $f$.
\end{theorem}

\begin{remark}\label{rem-cdpx}
If $\upsilon\equiv1$, then Theorem \ref{p2143}
reduces to the almost wavelet characterization
obtained in \cite[Theorem 1.3]{cddd03}; in particular,
if $\beta=0$, then Theorem \ref{p2143}
was originally established in \cite[p.\,615,
Section 8]{cdpx99} (see also \cite[Theorem 1.1]{cddd03}).
Observe that the ``norm" $\|\cdot\|_{w\ell^1_{\beta,\upsilon}}$ appearing in the
left-hand side of \eqref{e2026} is the weak version
of the norm $\|\cdot\|_{\ell^1_{\beta,\upsilon}}$
appearing in its right-hand side, which is precisely
why the characterization in Theorem \ref{p2143}
is referred to as the almost characterization of
$W^{1,1}_\upsilon$.
\end{remark}

To show this theorem, we need the
Calder\'on reproducing formula.
For any $f\in\mathcal{S}$,
recall its \emph{Fourier transform} $\mathcal{F} f$ is defined by
setting, for any $x\in\mathbb{R}^n$,
$\mathcal{F}f(x):=\int_{\mathbb{R}^n}
f(\xi)e^{-2\pi ix\cdot \xi}\,d\xi$
and its \emph{inverse Fourier transform} $\mathcal{F}^{-1}f$
is defined by setting
$\mathcal{F}^{-1}f(x):=\mathcal{F}f(-x)$ for any
$x\in\mathbb{R}^n$.
Choose a radial function $\eta_0\in\mathcal{S}$ satisfying
\begin{align}\label{ass-eta0}
\mathop\mathrm{\,supp\,}
\left(\mathcal{F}\theta_0\right)\subset
B({\bf0},2)\quad\text{and}\quad
\inf_{\xi\in B({\bf0},\frac53)}\left|\mathcal{F}\theta_0(\xi)\right|>0
\end{align}
and the other radial function $\eta_1\in\mathcal{S}$ satisfying
\begin{align}\label{ass-eta1}
\mathop\mathrm{\,supp\,}
\left(\mathcal{F}\theta_1\right)\subset
B({\bf0},2)\setminus B\left({\bf0},\frac12\right)
\quad\text{and}\quad
\inf_{\xi\in B({\bf0},\frac53)\setminus
B({\bf0},\frac35)}\left|\mathcal{F}\theta_1(\xi)\right|>0.
\end{align}
Using \cite[pp.\,130--131]{fj90}
(see also \cite[p.\,24, (2.6)]{ysy10}), we find that
there exist two radial functions $\eta_0,\eta_1\in\mathcal{S}$ satisfying
the same assumptions, respectively, as in \eqref{ass-eta0} and
\eqref{ass-eta1} and, for any $\xi\in\mathbb{R}^n$,
\begin{align*}
\mathcal{F}\theta_0(\xi)\mathcal{F}\eta_0(\xi)
+\sum_{j\in\mathbb{N}}
\mathcal{F}\theta_1\left(2^{-j}\xi\right)
\mathcal{F}\eta_1\left(2^{-j}\xi\right)=1.
\end{align*}
Moreover, if let $\theta_{I,2}:=(\theta_{0})_{I,2}$
and $\eta_{I,2}:=(\eta_{0})_{I,2}$
for any $I\in\mathcal{D}_0$ and
$\theta_{I,2}:=(\theta_1)_{I,2}$ and $\eta_{I,2}:=(\eta_1)_{I,2}$  for any
$I\in\mathcal{D}_+\setminus\mathcal{D}_0$,
then the following Calder\'on
reproducing formula holds, which is precisely \cite[(12.4)]{fj90}
(see also \cite[p.\,24, Lemma 2.3]{ysy10}).

\begin{lemma}\label{crf}
For any $f\in\mathcal{S}$ (resp. $f\in\mathcal{S}'$),
$f=\sum_{I\in\mathcal{D}_+}\left\langle f,\theta_{I,2}\right\rangle
\eta_{I,2}$
holds in $\mathcal{S}$ (resp. $\mathcal{S}'$),
here and thereafter, $\langle\cdot,\cdot\rangle$ denotes the
the duality pairing between $\mathcal{S}'$ and $\mathcal{S}$.
\end{lemma}

Let $L\in\mathbb{Z}_+$.
For any  $L$-times
differentiable function
$\varphi$, define
$$
\|\varphi\|_{L}:=\sup_{\{\gamma\in\mathbb{Z}_+^n:|\gamma|\le L\}}
\sup_{x\in\mathbb{R}^n}(1+|x|)^{n+L+|\gamma|}
\left|\partial^\gamma\varphi(x)\right|,$$
here and thereafter, for any $\gamma:=(\gamma_1,\ldots,\gamma_n)\in\mathbb{Z}_+^n$,
$\partial^\gamma:=(\frac{\partial}{\partial x_1})^{\gamma_1}
\cdots(\frac{\partial}{\partial x_n})^{\gamma_n}$.
An $f\in\mathcal{S}'$ is called
a \emph{tempered distribution of order $L$}
if there exists a positive constant $C$ such that,
for any $\varphi\in\mathcal{S}$,
$|\langle f,\varphi\rangle|\le C\|\varphi\|_{L}$.
Based on the above reproducing formula,
we obtain the following
representation
criterion of wavelets coefficients,
which plays an important role in the proof of Theorem \ref{p2143}. This
criterion
might be folklore but, for completeness, we give a
detailed proof here.

\begin{proposition}\label{wavelet}
Let $L\in\mathbb{Z}_+$ and $f\in\mathcal{S}'\cap L^1_{\rm loc}$
be a tempered distribution of order $L$.
\begin{enumerate}
  \item[{\rm(i)}] For any $N\in\mathbb{N}\cap(n+1+2L,\infty)$
  and $\Phi\in C^N_{\rm c}$,
  \begin{align}\label{wavelet-e1}
\int_{\mathbb{R}^n}f(x)\Phi(x)\,dx=\sum_{I\in\mathcal{D}_+}
\left\langle f,\theta_{I,2}\right\rangle
\left\langle\Phi,\eta_{I,2}\right\rangle.
  \end{align}
  \item[{\rm(ii)}] If the order $N$ of a renormalized wavelet system
  $\{\psi_{\omega,n^*}\}_{\omega\in\Omega}$ satisfies
  $N>n+1+2L$, then \eqref{e1453}
holds in $\mathcal{S}'$
with $\{\langle f,\psi_{\omega,n}\rangle\}_{\omega\in\Omega}$
therein being defined by using the Lebesgue integrals
between $f$ and $\{\psi_{\omega,n}\}_{\omega\in\Omega}$.
\end{enumerate}
\end{proposition}

\begin{proof}
We first prove (i). To this end,
fix $N\in\mathbb{N}\cap(n+1+2L,\infty)$ and
$\Phi\in C_{\rm c}^N$. Choose
$\rho\in C_{\rm c}^\infty$ satisfying
$\mathop\mathrm{\,supp\,}(\rho)\subset B({\bf0},1)$
and $\int_{\mathbb{R}^n} \rho(x)\,dx=1$.
For any $k\in\mathbb{N}$, define $\rho_k(\cdot):=2^{kn}\rho(2^k\cdot)$ and
$\Phi_k:=\rho_k\ast \Phi\in C_{\rm c}^\infty$.
From this, Lemma \ref{crf}, and $f\in\mathcal{S}'\cap L^1_{\rm loc}$,
it follows that \eqref{wavelet-e1}
holds with $\Phi$ replaced by $\Phi_k$ for any $k\in\mathbb{N}$.
Moreover, using \cite[THEOREM 4.1(v)]{eg15}),
we find that, for any
$\alpha\in\mathbb{Z}_+^n$ with $|\alpha|\le N$,
\begin{align}\label{e1947}
\|\partial^\alpha \Phi-\partial^\alpha \Phi_k\|_{L^\infty}
\to 0
\end{align}
as $k\to \infty$.
Suppose $\mathop\mathrm{\,supp\,}(\Phi)\subset B({\bf 0},M)$
for some $M\in\mathbb{N}$. Then $\mathop\mathrm{\,supp\,}(\Phi_k)\subset B({\bf 0},M+1)$
for any $k\in\mathbb{N}$.
This, together with the assumption $f\in L^1_{\rm loc}$,
H\"older's inequality, and \eqref{e1947},
further implies that
\begin{align}\label{e2136}
\left|\int_{\mathbb{R}^n}f(x)
[\Phi(x)-\Phi_k(x)]\,dx\right|\le
\|f\|_{L^1(B({\bf 0},M+1))}\|\Phi-\Phi_k\|_{L^\infty}\to 0
\end{align}
as $k\to\infty$.

Now, we show that the series on
the right-hand side of \eqref{wavelet-e1} is absolutely convergent.
Indeed, fix $I:=2^{-j_I}[k_I+[0,1)^n]\in\mathcal{D}_+$ with
$j_I\in\mathbb{Z}_+$ and $k_I\in\mathbb{Z}^n$.
By the assumption that $f$ is a tempered
distribution of order $L$ and some simple calculations, we obtain
\begin{align}\label{e2011}
|\langle f,\theta_{I,2}\rangle|&\lesssim
\left\|\theta_{I,2}\right\|_{L}
=\sup_{\{\gamma\in\mathbb{Z}_+^n:|\gamma|\le L\}}
|I|^{-\frac12-\frac{|\gamma|}{n}}
\sup_{x\in\mathbb{R}^n}(1+|x|^{n+L+|\gamma|})
\left|\partial^\gamma\theta\left(2^{j_I}x-k_I\right)\right|\notag\\
&\lesssim|I|^{-\frac12-\frac Ln}
\sup_{x\in\mathbb{R}^n}\frac{(1+|x|)^{n+2L}}{(1+|2^{j_I}x-k_I|)^{n+2L+1}}
\lesssim|I|^{-\frac12-\frac Ln}\left(1+|2^{-j_I}k_I|\right)^{n+2L}.
\end{align}
On the other hand, observe that the following estimate holds:
for any $I\in\mathcal{D}_+\setminus\mathcal{D}_0$,
\begin{align}\label{es-diag}
\left|\left\langle \Phi,\eta_{I,2}\right\rangle\right|
=\left|\int_{\mathbb{R}^n}\Phi(x)\eta_{I,2}(x)\,dx\right|
\lesssim\|\Phi\|_{N}\|\eta\|_{N}\|I|^{\frac12}2^{-j_I(N-1)}
\frac{1}{(1+|2^{-j_I}k_I|)^{n+N-1}};
\end{align}
indeed, this is exactly \cite[Lemma 2.4]{ysy10}
with a weaker assumption on $\Phi$, however, via
carefully checking the proof of \cite[Lemma 2.4]{ysy10},
we find that \eqref{es-diag} also holds in our assumption on $\Phi$.
From this, \eqref{e2011}, the definition of $\|\cdot\|_{N}$,
the assumption  $\mathop\mathrm{\,supp\,}(\Phi)\subset B({\bf 0},M)$,
the fact that, for any
$j\in\mathbb{Z}$ and $\beta\in(n,\infty)$,
\begin{align}\label{wavelet-e3}
\sum_{k\in\mathbb{Z}^n}\frac{1}{(1+|2^{-j}k|)^{\beta}}
\lesssim 2^{jn}\sum_{k\in\mathbb{Z}^n}
\int_{2^{-j}[k+[0,1)^n]}\frac{1}{(1+|x|)^\beta}\,dx
=2^{jn}\int_{\mathbb{R}^n}\frac{1}{(1+|x|)^\beta}\,dx\sim2^{jn},
\end{align}
and the assumption $N>n+1+2L$, we deduce that
\begin{align}\label{small-dya}
&\sum_{I\in\mathcal{D}_+\setminus\mathcal{D}_0}
\left|\left\langle f,\theta_{I,2}\right\rangle
\left\langle \Phi,\eta_{I,2}\right\rangle\right|\notag\\
&\quad\lesssim\|\Phi\|_{N}\sum_{I\in\mathcal{D}_+\setminus
\mathcal{D}_0}|I|^{-\frac{L}{n}}2^{-j_I(N-1)}
\frac{1}{(1+|2^{-j_I}k_I|)^{N-1-2L}}\notag\\
&\quad\lesssim M^{n+2N}\sup_{\{\alpha\in\mathbb{Z}_+^n:|\alpha|\le N\}}
\left\|\partial^\alpha\Phi\right\|_{L^\infty}
\sum_{j\in\mathbb{N}}
2^{-j(N-1-L)}\sum_{k\in\mathbb{Z}^n}
\frac{1}{(1+|2^{-j}k|)^{N-1-2L}}\notag\\
&\quad\sim M^{n+2N}\sup_{\{\alpha\in\mathbb{Z}_+^n:|\alpha|\le N\}}
\left\|\partial^\alpha\Phi\right\|_{L^\infty}
\sum_{j\in\mathbb{N}}2^{-j(N-1-n-L)}
\sim\ M^{n+2N}\sup_{\{\alpha\in\mathbb{Z}_+^n:|\alpha|\le N\}}
\left\|\partial^\alpha\Phi\right\|_{L^\infty}.
\end{align}
In addition, if $I\in\mathcal{D}_0$, then,
by the assumption $\mathop\mathrm{\,supp\,}(\Phi)\subset B({\bf 0},M)$,
we easily find that, when $|2^{-j_I}k_I|> 2M$,
$|\langle \Phi,\eta_{I,2}\rangle|\lesssim \|\Phi\|_{L^\infty}
\frac{1}{(1+|2^{-j_I}k_I|)^{2n+2L+1}}$
and, when $|2^{-j_I}k_I|\le 2M$,
$|\langle \Phi,\eta_{I,2}\rangle|\lesssim \|\Phi\|_{L^\infty}$.
These, combined with \eqref{e2011} and \eqref{wavelet-e3} again,
further imply that
\begin{align*}
&\sum_{I\in\mathcal{D}_0}\left|
\left\langle f,\theta_{I,2}\right\rangle
\left\langle \Phi,\eta_{I,2}\right\rangle\right|\\
&\quad\lesssim\|\Phi\|_{L^\infty}
\left[\sum_{\{k\in\mathbb{Z}^n:|k|>2M\}}\frac{1}{(1+|k|)^{n+1}}
+\sum_{\{k\in\mathbb{Z}^n:|k|\le2M\}}(1+|k|)^{n+2L}\right]
\lesssim M^{2(n+L)}\left\|\Phi\right\|_{L^\infty}.
\end{align*}
Applying this and \eqref{small-dya},
we conclude that
\begin{align}\label{e2125}
\sum_{I\in\mathcal{D}_+}\left|
\left\langle f,\theta_{I,2}\right\rangle
\left\langle \Phi,\eta_{I,2}\right\rangle\right|
\lesssim M^{2(N+n+L)}\sup_{\{\alpha\in\mathbb{Z}_+^n:|\alpha|\le N\}}
\left\|\partial^\alpha\Phi\right\|_{L^\infty}.
\end{align}
Therefore, the series on
the right-hand side of \eqref{wavelet-e1} is absolutely convergent.

Finally, we prove that \eqref{wavelet-e1} holds.
Indeed, for any $k\in\mathbb{N}$,
using \eqref{e2125} with $\Phi$ replaced by
$\Phi-\Phi_k$ and the fact that
$\mathop\mathrm{\,supp\,}(\Phi-\Phi_k)\subset B({\bf 0},M+1)$,
we obtain
\begin{align*}
&\left| \sum_{I\in\mathcal{D}_+}
\left\langle f,\theta_{I,2}\right\rangle
\left\langle \Phi,\eta_{I,2}\right\rangle
- \sum_{I\in\mathcal{D}_+}
\left\langle f,\theta_{I,2}\right\rangle
\left\langle \Phi_k,\eta_{I,2}\right\rangle\right|
\lesssim M^{2(N+n+L)}\sup_{\{\alpha\in\mathbb{Z}_+^n:|\alpha|\le N\}}
\left\|\partial^\alpha\Phi-\partial^\alpha\Phi_k\right\|_{L^\infty}.
\end{align*}
From this, the fact that \eqref{wavelet-e1}
holds with $\Phi$ replaced by $\Phi_k$ for any $k\in\mathbb{N}$,
\eqref{e1947}, and \eqref{e2136}, it follows that \eqref{wavelet-e1}
also holds for any such $\Phi$.
This finishes the proof of (i).

Next, we show (ii).
For any $\phi\in\mathcal{S}$, consider the series
\begin{align*}
S:=&\,\sum_{e\in E}\sum_{Q,R\in\mathcal{D}_+}
\left\langle f,\theta_{R,2}\right\rangle \left\langle
\psi^e_{Q,n},\eta_{R,2}\right\rangle
\left\langle \psi^e_{Q,n^\ast},\phi\right\rangle
+\sum_{Q\in\mathcal{D}_0}\sum_{R\in\mathcal{D}_+}\left\langle f,\theta_{R,2}\right\rangle
\left\langle \psi^{\bf 0}_{Q,n},\eta_{R,2}
\right\rangle\left\langle \psi^{\bf 0}_{Q,n^\ast},\phi\right\rangle.
\end{align*}
Repeating the proof \eqref{e2125}, we conclude that the series $S$
is also absolutely convergent.
Then, applying both an argument similar to that used in the proof of
\cite[(4.6)]{bhyy24} and (i), we obtain \eqref{e1453} holds in $\mathcal{S}'$,
which completes the proof of (ii) and hence Proposition
\ref{wavelet}.
\end{proof}

We also require the following growth property of $A_1$-weights.

\begin{lemma}\label{l1922}
If $\upsilon\in A_1$, then, for almost every $x\in\mathbb{R}^n$,
$\upsilon(x)\gtrsim\frac{\upsilon(B({\bf0},1))}{[\upsilon]_{A_1}^2
(1+|x|)^n}$,
where the implicit positive constant depends only on $n$.
\end{lemma}

\begin{proof}
Since $\upsilon\in A_1$, from Lemma \ref{ApProperty}(i),
it follows that, for almost every $x\in\mathbb{R}^n$,
there exists $y\in\mathbb{R}^n$ such that $x\in B(y,1)$ and hence
\begin{align}\label{l1922e1}
\upsilon(B(y,1))\lesssim M(\upsilon)(x)\le[\upsilon]_{A_1} \upsilon(x).
\end{align}
Fix $x$ satisfying this estimate and
let $k\in\mathbb{Z}_+$ be such that $k\in[\log_2(1+|y|),1+\log_2(1+|y|))$.
Then $B({\bf 0},1)\subset B(y,2^k)$. By this and
Lemma \ref{ApProperty}(ii), we obtain
\begin{align*}
\upsilon(B({\bf 0},1))&\le \upsilon\left(B\left(y,2^k\right)\right)\le[\upsilon]_{A_1}
2^{kn}\upsilon(B(y,1))\lesssim [\upsilon]_{A_1}(1+|y|)^n\upsilon(B(y,1))\\
&\lesssim[\upsilon]_{A_1}(1+|x|)^n\upsilon(B(y,1)),
\end{align*}
which, together with \eqref{l1922e1},
further implies the desired conclusion
and hence completes the proof of Lemma \ref{l1922}.
\end{proof}

Moreover, the following exquisite geometrical properties about shifted
dyadic grids (see, for instance, \cite[p.\,479]{mtt02},
\cite[Section 2.2]{lsu2012}, or \cite[Lemma 3.2.26]{hvvw16})
are needed and play a key role in subsequent
sections.

\begin{lemma}\label{2115}
For any $\alpha\in\{0,\frac{1}{3},\frac{2}{3}\}^n$,
let $\mathcal{D}^\alpha$ be the same as in \eqref{dyadic}.
Then the following properties hold.
\begin{enumerate}
\item[\textup{(i)}]
For any $Q,P\in\mathcal{D}^\alpha$ with
$\alpha\in\{0,\frac{1}{3},\frac{2}{3}\}^n$,
$Q\cap P\in\{\emptyset,Q,P\}${\rm;}
\item[$\mathrm{(ii)}$]
For any cube $P\in\mathcal{Q}$,
there exist $\alpha\in\{0,\frac13,\frac23\}^n$
and $Q\in\mathcal{D}^\alpha$
such that $P\subset Q$ and $l(Q)\in(\frac{3}{2}l(P),3l(P)]$.
\end{enumerate}
\end{lemma}

Based on this, we obtain the following
geometrical lemma, which is also
important for the proof of Theorem \ref{p2143}
and is of independent interest.

\begin{lemma}\label{lem-dom}
Let $K\in(0,\infty)$, $\mathcal{G}:=\{Q_j\}_{j}$ be a collection
of disjoint cubes with the same edge length,
and $\mathcal{S}:=\{KQ\}_{Q\in\mathcal{G}}$.
If, for any $Q\in\mathcal{S}$, define
\begin{align*}
{\rm Dom}(Q):=\left\{P\in\bigcup_{\alpha\in\{0,\frac13,\frac23\}^n}
\mathcal{D}^\alpha:P\supset Q,\ l(P)\in\left(\frac32l(Q),3l(Q)\right]\right\},
\end{align*}
then ${\rm Dom}(\mathcal{S}):=\bigcup_{Q\in\mathcal{S}}
{\rm Dom}(Q)$
is a nonempty set of cubes with the same edge length and, for any
$P\in{\rm Dom}(\mathcal{S})$,
$\sharp\{Q\in\mathcal{S}:P\in{\rm Dom}(Q)\}\le 3^nK^n$.
\end{lemma}

\begin{proof}
By Lemma \ref{2115}(ii), we find that,
for any $Q\in\mathcal{S}$, the set
${\rm Dom}(Q)$ is nonempty. On the other hand,
denote by $l$
the edge length of cubes in $\mathcal{S}$.
Choose the unique $j\in\mathbb{Z}$ satisfying
$2^{-j}\in(\frac32l,3l]$. Then every cube in ${\rm Dom}(\mathcal{S})$
has the edge length $2^{-j}$.
Next, we fix $P\in{\rm Dom}(\mathcal{S})$ and prove that
$$\sharp\left\{Q\in\mathcal{S}:P\in{\rm Dom}(Q)\right\}\le 3^nK^n.$$
Indeed, from the definition of $\mathcal{S}$, we
deduce that $\{\frac1K Q:Q\in\mathcal{S},\ P\in{\rm Dom}(Q)\}$
is a collection of disjoint cubes contained in $P$.
This implies that
\begin{align*}
3^nl^n\ge|P|\ge \sum_{\{Q\in\mathcal{S}:P\in{\rm Dom}(Q)\}}
\left|\frac1K Q\right|=\frac{l^n}{K^n}\sharp
\left\{Q\in\mathcal{S}:P\in{\rm Dom}(Q)\right\}.
\end{align*}
Therefore, $\sharp\{Q\in\mathcal{S}:P\in{\rm Dom}(Q)\}\le 3^nK^n$,
which then completes the proof of Lemma \ref{lem-dom}.
\end{proof}

Now, we are ready to prove Theorem \ref{p2143}.

\begin{proof}[Proof of Theorem \ref{p2143}]
Fix $f\in W^{1,1}_\upsilon$.
Using H\"older's inequality, Lemma \ref{l1922}, and
the definition of $\|\cdot\|_0$, we
conclude that, for any $\phi\in\mathcal{S}$,
\begin{align}\label{e1959}
\left|\int_{\mathbb{R}^n}f(x)\phi(x)\,dx\right|\le \|f\|_{L^1_\upsilon}
\left\|\upsilon^{-1}\phi\right\|_{L^\infty}\lesssim
\|f\|_{L^1_\upsilon}
\left\|(1+|\cdot|)^n\phi\right\|_{L^\infty}
\le\|f\|_{L^1_\upsilon}\|\phi\|_{0}.
\end{align}
Thus, $f$ is a tempered distribution of order $0$.
By this and Proposition \ref{wavelet}, we obtain \eqref{e1453} holds in $\mathcal{S}'$.

Now, we show the lower estimate of \eqref{e2026}.
Assume that, for any $i\in\{0,1\}$,
$\mathop\mathrm{\,supp\,}(\psi^i)\subset [-M,M]$
for some $M\in\mathbb{N}$.
Then it is easy to conclude that, for any $e\in E$ and $Q\in\mathcal{D}_+\setminus\mathcal{D}_0$
or $e\in\{0,1\}^n$ and $Q\in\mathcal{D}_0$,
$\mathop\mathrm{\,supp\,}(\psi^e_{Q,n})\subset 3MQ$.
For any $j\in\mathbb{Z}_+$, define $\mathcal{S}_j:=\{3MQ\}_{Q\in\mathcal{D}_j}$.
Choose the unique $j_M\in\mathbb{N}$ such that $2^{j_M}\in(\frac92M,9M]$.
Then, by the definition, we conclude that, for any $j\in\mathbb{Z}_+$,
the edge length of every cube in ${\rm Dom}(\mathcal{S}_j)$
is $2^{-j+j_M}$, where ${\rm Dom}(\mathcal{S}_j)$ is defined as in
Lemma \ref{lem-dom}. Applying Lemma \ref{2115}(ii) and H\"older's
inequality,
we find that, for any $Q\in\mathcal{D}_0$ and $e\in\{0,1\}^n$, there exists
$P_Q\in{\rm Dom}(\mathcal{S}_0)$ such that
$$\left|\left\langle f,\psi_{Q,n}^e\right\rangle\right|
\le \|f\|_{L^1(3MQ)}\left\|\psi_{Q,n}^e\right\|_{L^\infty}
\lesssim\|f\|_{L^1(P_Q)}.$$
From this, Tonelli's theorem, and Lemmas \ref{lem-dom}, \ref{ApProperty}(iii),
and \ref{2115}(i),
we deduce that, for any $\lambda\in(0,\infty)$,
\begin{align}\label{wavelet-d0}
&\lambda\sum_{e\in\{0,1\}^n}
\sum_{\{Q\in\mathcal{D}_0:|\langle f,\psi^e_{Q,n}\rangle|>\lambda|Q|^\beta\}}
|Q|^{\beta-1}\upsilon(Q)\notag\\
&\quad\le \sum_{e\in\{0,1\}^n}\sum_{Q\in\mathcal{D}_0}
\left|\left\langle f,\psi^e_{Q,n}\right\rangle\right|
\frac{\upsilon(Q)}{|Q|}
\lesssim\sum_{Q\in\mathcal{S}_0}\sum_{P\in{\rm Dom}(Q)}
\|f\|_{L^1(P)}\frac{\upsilon(P)}{|P|}\notag\\
&\quad=\sum_{P\in{\rm Dom}(\mathcal{S}_0)}
\sum_{\{Q\in\mathcal{S}_0:P\in{\rm Dom}(Q)\}}\|f\|_{L^1(P)}
\frac{\upsilon(P)}{|P|}\notag\\
&\quad\lesssim[\upsilon]_{A_1}\sum_{\alpha\in\{0,\frac13,\frac23\}^n}
\sum_{P\in\mathcal{D}^\alpha_{-j_M}}\int_{P}|f(x)|\upsilon(x)\,dx
\lesssim[\upsilon]_{A_1}\|f\|_{L^1_\upsilon}.
\end{align}
On the other hand, fixing $\lambda\in(0,\infty)$ and
using Lemma \ref{2115}(ii) again,
\eqref{e1937}, and H\"older's inequality,
we conclude that,
for any $j\in\mathbb{N}$,
$Q\in\mathcal{D}_j$ such that $\sup_{e\in E}
|\langle f,\psi^e_{Q,n}\rangle|>\lambda|Q|^\beta$,
there exists $P_Q\in{\rm Dom}(\mathcal{S}_j)$
such that, for any $e\in E$,
\begin{align*}
\left|\left\langle f,\psi^e_{Q,n}\right\rangle\right|
&=\left|\left\langle f-f_{P_Q},\psi^e_{Q,n}\right\rangle\right|
\le \left\|f-f_{P_Q}\right\|_{L^1(P_Q)}
\left\|\psi^e_{Q,n}\right\|_{L^\infty}\\
&\sim\left|P_Q\right|^
{-\frac1n}\left\|f-f_{P_Q}\right\|_{L^1(P_Q)}\le \omega_{P_Q}(f)
\end{align*}
with the positive equivalence constant depending only on $M$ and $n$;
therefore, there exists a positive constant $C_0$,
depending only on $M$, $n$, and $\beta$, such that $\omega_{P_Q}(f)>C_0\lambda|P_Q|^\beta$.
This, combined with Tonelli's theorem, Lemma \ref{lem-dom}, and
Theorem \ref{thm-cddd}(i) via letting
$p=1$ and replacing $\mathcal{D}$ therein
by $\{\mathcal{D}^\alpha\}_{\alpha\in\{0,\frac13,\frac23\}^n}$,
further implies that, for any $\lambda\in(0,\infty)$,
\begin{align*}
&\lambda\sum_{e\in E}
\sum_{\{Q\in\mathcal{D}_+\setminus
\mathcal{D}_0:|\langle f,\psi^e_{Q,n}\rangle|>
\lambda|Q|^\beta\}}|Q|^{\beta-1}\upsilon(Q)\\
&\quad\lesssim\lambda
\sum_{\{Q\in\mathcal{D}_+\setminus
\mathcal{D}_0:\sup_{e\in E}|\langle f,\psi^e_{Q,n}\rangle|
>\lambda|Q|^\beta\}}|Q|^{\beta-1}\upsilon(Q)\\
&\quad\lesssim\lambda\sum_{j\in\mathbb{N}}\sum_{Q\in\mathcal{S}_j}
\sum_{\{P\in{\rm Dom}(Q):\omega_P(f)>C_0\lambda|P|^\beta\}}
|P|^{\beta-1}\upsilon(P)\\
&\quad=\lambda\sum_{\alpha\in\{0,\frac13,\frac23\}^n}\sum_{j\in\mathbb{N}}
\sum_{P\in\mathcal{D}^\alpha_{C_0\lambda,\beta}[f]\cap
\mathcal{D}^\alpha_{j-j_M}}\sum_{\{Q\in\mathcal{D}^\alpha:P\in{\rm Dom}(3MQ)\}}
|P|^{\beta-1}\upsilon(P)\\
&\quad\lesssim\lambda\sum_{\alpha\in\{0,\frac13,\frac23\}^n}
\sum_{P\in\mathcal{D}^\alpha_{C_0\lambda,\beta}[f]}|P|^{\beta-1}\upsilon(P)
\lesssim[\upsilon]_{A_1}\|\,|\nabla f|\,\|_{L^1_\upsilon}.
\end{align*}
Combining this and \eqref{wavelet-d0}, we obtain
the lower estimate of \eqref{e2026}.

Finally, it remains to prove the upper estimate of
\eqref{e2026}. Without loss of generality,
we may assume that the rightmost-hand side of \eqref{e2026}
is finite.
For any $e\in E$ and $Q\in\mathcal{D}_+\setminus\mathcal{D}_0$
or $e\in\{0,1\}^n$ and $Q\in\mathcal{D}_0$,
using the definition of $\psi^e_{Q,n^*}$, the facts that
$\mathop\mathrm{\,supp\,}(\psi^e_{Q,n^*})\subset 3MQ$
and that $l(Q)\le 1$, and
Lemma \ref{ApProperty}(ii), we find that
\begin{align}\label{wavelet-e2}
\left\|\psi^e_{Q,n^*}\right\|_{W^{1,1}_\upsilon}
&\le|Q|^{-\frac{1}{n^*}}\upsilon(3MQ)\left\|\psi^e\right\|_{L^\infty}
+|Q|^{-\frac{1}{n^*}-\frac1n}\upsilon(3MQ)
\left\|\,\left|\nabla \psi^e\right|\,\right\|_{L^\infty}\notag\\
&\lesssim\frac{\upsilon(3MQ)}{|Q|}
\lesssim[\upsilon]_{A_1}\frac{\upsilon(Q)}{|Q|}.
\end{align}
If $\{\langle f,\psi_{\omega,n}\rangle\}_{\omega\in\Omega}$ is a finite sequence, that is,
only finite elements in $\{\langle f,\psi_{\omega,n}\rangle\}_{\omega\in\Omega}$ is nonzero,
then, by the triangle inequality, the Cauchy--Schwarz inequality,
and \eqref{wavelet-e2}, we conclude that
\begin{align}\label{wavelet-e4}
\left\|\sum_{\omega\in\Omega}\left\langle f,\psi_{\omega,n}\right\rangle
\psi_{\omega,n^*}\right\|_{W^{1,1}_{\upsilon}}
&\le\sum_{\omega\in\Omega}\left|\left\langle f,\psi_{\omega,n}\right\rangle\right|
\left\|\psi_{\omega,n^*}\right\|_{W^{1,1}_{\upsilon}}
\lesssim [\upsilon]_{A_1}\sum_{\omega\in\Omega}
\left|\left\langle f,\psi_{\omega,n}\right\rangle\right|
\frac{\upsilon(I_\omega)}{|I_\omega|}\notag\\
&=[\upsilon]_{A_1}\left\|\left\{
\frac{\left\langle f,\psi_{\omega,n}\right\rangle}{|I_\omega|^\beta}\right\}
_{\omega\in\Omega}\right\|_{\ell^1_{\beta,\upsilon}}.
\end{align}
For general sequence $f^{\rm w}$,
since $W^{1,1}_{\upsilon}$ is complete
(see, for instance, \cite[THEOREM 3.3]{af03}) and $W^{1,1}_\upsilon
\hookrightarrow\mathcal{S}'$ [which is an immediate
 consequence of \eqref{e1959}], from
\eqref{wavelet-e4} together with both a
standard limiting argument and \eqref{e1453}, we easily deduce the
desired upper estimate. This then finishes the proof of Theorem \ref{p2143}.
\end{proof}

Our second aim of the present section is to apply
Theorem \ref{p2143} to establish
the sharp real interpolation between the critical weighted
Sobolev space and weighted Besov spaces (that is, Theorem
\ref{t953}).
To this end, we first recall some basic concepts and
useful tools on
Besov spaces and real interpolation spaces.
Let $\theta_0,\theta_1\in\mathcal{S}$ be,
respectively, as in \eqref{ass-eta0}
and, for any $j\in\mathbb{N}\cap[2,\infty)$, define
$\theta_j(\cdot)=2^{jn}\theta_1(2^j\cdot)$.
For any $s\in\mathbb{R}$, $p\in[1,\infty)$, and $\upsilon\in A_p$,
the \emph{weighted Besov space} $B^{s,\upsilon}_{p,p}$ is defined to be the set of all
$f\in\mathcal{S}'$ such that
$\|f\|_{B^{s,\upsilon}_{p,p}}:=
(\sum_{j\in\mathbb{Z}_+}2^{jsp}
\|\theta_j\ast f\|_{L^p_\upsilon})^{\frac1p}<\infty;
$ we refer to, for example,
\cite{b82,bbd20,bd17,s18,Tr83} for its real-variable theory.
In particular, one can characterize
weighted Besov spaces
in terms of the Daubechies wavelet system, that is,
for any $s\in\mathbb{R}$, $p\in[1,\infty),$ and $\upsilon\in A_p$
$f\in B^{s,\upsilon}_{p,p}$ if and only if
\eqref{e1453} holds in $\mathcal{S}'$ and
\begin{align}\label{e2028}
\|f\|_{B^{s,\upsilon}_{p,p}}\sim\left\|\left\{
\frac{\langle f,\psi_{\omega,n}\rangle}{|I_\omega|^\beta}\right\}
_{\omega\in\Omega}\right\|_{\ell_{\beta,\upsilon}^{p}}<\infty,
\end{align}
where $\beta:=1+\frac{(s-1)p}{(p-1)n}$ and,
for any $\omega\in\Omega$, $\langle f,\psi_{\omega,n}\rangle$
is defined as in the right-hand side of \eqref{wavelet-e1}
with $\Phi$ therein replaced by $\psi_{\omega,n}$;
see, for instance, \cite[Corollary 10.3]{r03}.

Now, we recall some concepts on real
interpolation spaces (see, for example, \cite{bs1988,bl76}).
Let $(X_0,X_1)$ be a couple of Banach spaces that are both
continuously embedded into the same Banach space $\mathcal{X}$.
The \emph{space} $X_0+X_1$ is defined to be the set of all $f\in\mathcal{X}$
which can be written as $g+h$ with $g\in X_0$ and $h\in X_1$.
For any $f\in X_0+X_1$, we define its \emph{norm} in $X_0+X_1$ by setting
$\|f\|_{X_0+X_1}:=\inf_{f=g+h}\|g\|_{X_0}+\|h\|_{X_1}$
and the \emph{K-functional} $K(f,t;X_0,X_1)$ for any $t\in(0,\infty)$
by setting
$K(f,t;X_0,X_1):=\inf_{f=g+h}\|g\|_{X_0}+t\|h\|_{X_1}.$
Next, we define the real interpolation space by using $K$-functionals.
To be precise, if $\theta\in(0,1)$ and $q\in(0,\infty]$, then the
\emph{real interpolation space} $(X_0,X_1)_{\theta,q}$ is defined to be the set of all
$f\in X_0+X_1$ such that
\begin{align*}
\|f\|_{(X_0,_1)_{\theta,q}}:=
\begin{cases}
\displaystyle\left\{\int_{0}^{\infty}\left[t^{-\theta}K(f,t;X_0,X_1)\right]^q\,\frac{dt}{t}
\right\}^{\frac1q} & \mbox{if } q\in(0,\infty), \\
\displaystyle\sup_{t\in(0,\infty)}t^{-\theta}K(f,t;X_0,X_1) & \mbox{if }q=\infty
\end{cases}
\end{align*}
is finite. Moreover, it always holds the interpolation inequality
\begin{align}\label{e1126}
\|f\|_{(X_0,X_1)_{\theta,q}}\le \|f\|^{1-\theta}_{X_0}\|f\|_{X_1}^{\theta}
\end{align}
for any $f\in X_0\cap X_1$ (see, for instance, \cite[p.\,49]{bl76}).
In Section \ref{sec-gn}, we apply this
inequality to obtain a sharp Gagliardo--Nirenberg
inequality in various Sobolev type spaces.

Now, we are ready to prove Theorem \ref{t953}.
\begin{proof}[Proof of Theorem \ref{t953}]
Let $\beta:=1+\frac{(s-1)p}{(p-1)n}$. Notice that $\beta\in(-\infty,1-\frac1n)\cup
(1,\infty)$ due to $s\in (-\infty,\frac1p)\cup(1,\infty)$.
In Theorem \ref{p2143}, for any function $f\in W^{1,1}_\upsilon$,
we define its wavelet coefficients
$\{\langle f,\psi_{\omega,n}\rangle\}_{\omega\in\Omega}$
by using Lebesgue integrals.
Applying Proposition \ref{wavelet}(i),
we find that this way coincides with
that used in \eqref{e2028}.
Therefore, we can define the analysis operator
\begin{align*}
T: \left\{\begin{array}{rll}
W^{1,1}_{\upsilon}+B^{s,\upsilon}_{p,p}
&\to& w\ell^1_{\beta,\upsilon}
+\ell_{\beta,\upsilon}^{p},\\
f&\mapsto&{\displaystyle
\left\{\frac{\langle f,\psi_{\omega,n}\rangle}{|I_\omega|^\beta}\right\}
_{\omega\in\Omega}}
\end{array}\right.
\end{align*}
and the synthesis operator
\begin{align*}
S:\left\{
\begin{array}{rll}
{\displaystyle\ell^1_{\beta,\upsilon}
+\ell_{\beta,\upsilon}^{p}}&\to&{\displaystyle W^{1,1}_{\upsilon}+B^{s,\upsilon}_{p,p},}\\
{\displaystyle\left\{a_\omega\right\}_{\omega\in\Omega}}
&\mapsto&{\displaystyle \sum_{\omega\in\Omega}\left|I_\omega\right|^\beta
a_\omega\psi_{\omega,n}},
\end{array}\right.
\end{align*}
where $\{\langle f,\psi_{\omega,n}\rangle\}_{\omega\in\Omega}$
is defined as in
\eqref{e2028}.
From Theorem \ref{p2143} and \eqref{e2028},
it follows that both $T$ and $S$ are well
defined and $S\circ T={\rm Id}$, where
${\rm Id}$ denotes the identity mapping on
$W^{1,1}_{\upsilon}+B^{s,\upsilon}_{p,p}$.
Using \cite[p.\,113, Theorem 5.3.1]{bl76}
with the measure space therein replaced by
$(\Omega,\{|I_\omega|^{\beta-1}\upsilon(I_\omega)\}_{\omega\in\Omega})$,
we obtain
\begin{align*}
\left(w\ell^1_{\beta,\upsilon}, \ell_{\beta,\upsilon}^{p}\right)_{\theta,q}
=\left(\ell^1_{\beta,\upsilon}, \ell_{\beta,\upsilon}^{p}\right)_{\theta,q}
=\ell_{\beta,\upsilon}^{q}.
\end{align*}
Then, applying Theorem \ref{p2143} and \eqref{e2028} again, we conclude that
$S:\ell_{\beta,\upsilon}^{q}\to
(W^{1,1}_{\upsilon},B^{s,\upsilon}_{p,p})_{\theta,q}$
and $T:(W^{1,1}_{\upsilon},B^{s,\upsilon}_{p,p})_{\theta,q}
\to\ell_{\beta,\upsilon}^{q}$
are both bounded. By these
and the conditions that $\sigma=1+\theta(s-1)$ and $\frac1q=
1-\theta(1-\frac1p)$, we find that
$\frac{(s-1)p}{p-1}=\frac{(\sigma-1)q}{q-1}$ and hence,
for any $f\in B^{\sigma,\upsilon}_{q,q}$,
\begin{align*}
\|f\|_{(W^{1,1}_{\upsilon},B^{s,\upsilon}_{p,p})_{\theta,q}}
=\|S\circ Tf\|_{(W^{1,1}_{\upsilon},B^{s,\upsilon}_{p,p})_{\theta,q}}
\lesssim \left\|\left\{\frac{\langle f,\psi_{\omega,n}\rangle}{|I_\omega|^\beta}
\right\}_{\omega\in\Omega}\right\|_{\ell_{\beta,\upsilon}^q}
\sim\|f\|_{B^{\sigma,\upsilon}_{q,q}}
\end{align*}
and, for any $f\in (W^{1,1}_{\upsilon},B^{s,\upsilon}_{p,p})_{\theta,q}$,
\begin{align*}
\|f\|_{B^{\sigma,\upsilon}_{q,q}}
\sim\left\|\left\{\frac{\langle f,\psi_{\omega,n}\rangle}
{|I_\omega|^\beta}\right\}_{\omega\in\Omega}
\right\|_{\ell_{\beta,\upsilon}^{q}}
=\left\|Tf\right\|_{\ell_{\beta,\upsilon}^{q}}
\lesssim\|f\|_{(W^{1,1}_{\upsilon},B^{s,\upsilon}_{p,p})_{\theta,q}}.
\end{align*}
Thus, the desired conclusion holds,
which then completes the proof of Theorem \ref{t953}.
\end{proof}

\section{BSVY Formula in Ball Banach Function Spaces}\label{section2}

In this section,
we apply Theorem \ref{thm-cddd}
to establish the BSVY formula in ball Banach function spaces (see Theorem
\ref{xgamma<0}).
First, we recall some basic concepts about ball Banach function spaces.
The following definition of
ball (quasi-)Banach function spaces was
introduced in \cite[Definition 2.1]{shyy2017}.

\begin{definition}\label{1659}
A quasi-Banach space $X\subset\mathscr{M}$,
equipped with a quasi-norm $\|\cdot\|_X$
which makes sense for all the functions
in $\mathscr{M}$,
is called a \emph{ball quasi-Banach function space} if $X$ satisfies that
\begin{enumerate}
\item[\textup{(i)}]
for any $f\in\mathscr{M}$,
if $\|f\|_X=0$, then $f=0$ almost everywhere;
\item[\textup{(ii)}]
if $f,g\in\mathscr{M}$
with $|g|\leq|f|$ almost everywhere,
then $\|g\|_X\leq\|f\|_X$;
\item[\textup{(iii)}]
if a sequence $\{f_m\}_{m\in\mathbb{N}}\subset\mathscr{M}$
and $f\in\mathscr{M}$ satisfy
that $0\leq f_m\uparrow f$ almost everywhere
as $m\to\infty$, then $\|f_m\|_X\uparrow\|f\|_X$ as $m\to\infty$;
\item[\textup{(iv)}]
for any ball $B:=B(x,r)$ with $x\in\mathbb{R}^n$ and $r\in(0,\infty)$,
$\mathbf{1}_B\in X$.
\end{enumerate}
Moreover, a ball quasi-Banach function space $X$ is called a
\emph{ball Banach function space} if
$X$ satisfies the following additional conditions:
\begin{enumerate}
\item[\textup{(v)}]
for any $f,g\in X$,
$\|f+g\|_X\leq\|f\|_X+\|g\|_X$;
\item[\textup{(vi)}]
for any ball $B$,
there exists a positive constant $C_{(B)}$, depending on $B$,
such that, for any $f\in X$,
$
\int_B|f(x)|\,dx\leq C_{(B)}\|f\|_X.
$
\end{enumerate}
\end{definition}

\begin{remark}\label{rm-bqbf}
\begin{enumerate}
\item[{\rm(i)}] Observe that,
  in Definition \ref{1659}(iv),
  if we replace any ball $B$
  by any bounded measurable set
  $E$, we obtain an equivalent formulation
  of ball quasi-Banach function spaces.
\item[{\rm(ii)}]
Let $X$ be a ball quasi-Banach function space.
Then, by the definition, we can easily conclude
that, for any $f\in\mathscr{M}$,
$\|f\|_X=0$ if and only if $f=0$ almost everywhere
(see also \cite[Proposition 1.2.16]{lyh2320}).
\item[{\rm(iii)}]Applying both (ii) and (iii)
of Definition \ref{1659}, we find that any ball quasi-Banach
function space $X$ has Fatou's property, that is,
for any sequence $\{f_k\}_{k\in\mathbb{N}}$ in $X$,
\begin{align*}
\left\|\liminf_{k\to\infty}\left|f_k\right|\right\|_X
\leq\liminf_{k\to\infty}\left\|f_k\right\|_X
\end{align*}
(see also \cite[Lemma 2.4]{wyy.arxiv}).
\item[{\rm(iv)}] From \cite[Proposition 1.2.36]{lyh2320}
(see also \cite[Theorem~2]{dfmn2021}),
we deduce that every ball quasi-Banach function space
is complete.
\item[{\rm(v)}] Recall that a quasi-Banach space
$X\subset\mathscr{M}$
is called a \emph{quasi-Banach function space}
if it is a ball quasi-Banach function space
and it satisfies Definition \ref{1659}(iv)
with ball therein replaced by any
measurable set of \emph{finite measure}.
Moreover, a \emph{Banach function space}
is a quasi-Banach function space satisfying both
(v) and (vi) of Definition \ref{1659}
with ball therein replaced by any measurable set
of \emph{finite measure}, which was originally
introduced in
\cite[Chapter 1, Definitions 1.1 and 1.3]{bs1988}.
It is easy to show that every quasi-Banach
function space (resp. Banach function space)
is a ball quasi-Banach
function space (resp. ball Banach function space),
and the converse is not necessary to be true.
Several examples about ball (quasi-)Banach function spaces
are given in Section \ref{S5}.

\item[${\rm (vi)}$]
In Definition~\ref{1659},
if we replace (iv)
by the following \emph{saturation property}:
\begin{enumerate}
\item[]
for any measurable set $E\subset\mathbb{R}^n$
of positive measure, there exists a measurable set $F\subset E$
of positive measure satisfying that $\mathbf{1}_F\in X$,
\end{enumerate}
then we obtain the definition of quasi-Banach function spaces
in Lorist and Nieraeth
\cite{ln23}.
Moreover, by \cite[Proposition~2.5]{zyy23}
(see also \cite[Proposition~4.22]{n23}),
we find that, if the
quasi-normed vector space $X$ satisfies
the additional assumption that
the Hardy--Littlewood maximal operator is weakly bounded on
its convexification,
then the quasi-Banach function space in \cite{ln23} coincides with
the ball quasi-Banach function space. Thus,
under this additional assumption,
working with ball quasi-Banach function
spaces in the sense of Definition~\ref{1659}
or quasi-Banach function spaces in
the sense of \cite{ln23} would yield
exactly the same results.
\end{enumerate}
\end{remark}

The following definition of the $p$-convexification of a ball
quasi-Banach function space
is precisely \cite[Definition 2.6]{shyy2017}.

\begin{definition}
Let $X$ be a ball quasi-Banach function space and $p\in(0,\infty)$.
The \emph{$p$-convexification} $X^p$ of $X$ is defined by setting
$X^p:=\{f\in\mathscr{M}:|f|^p\in X\}$
equipped with the quasi-norm $\|f\|_{X^p}:=\|\,|f|^p\|_{X}^\frac{1}{p}$
for any $f\in X^p$.
\end{definition}

Moreover, the following concept
of the associate space of a ball
Banach function space can be found in \cite[p.\,9]{shyy2017};
see \cite[Chapter 1, Section 2]{bs1988} for more details.

\begin{definition}
Let $X$ be a ball Banach function space.
The \emph{associate space} (also called the \emph{K\"othe dual}) $X'$ of $X$
is defined by setting
\begin{align*}
X':=\left\{f\in\mathscr{M}:
\|f\|_{X'}:=\sup_{\{g\in X:\|g\|_{X}=1\}}
\left\|fg\right\|_{L^1}<\infty\right\}.
\end{align*}
\end{definition}

\begin{remark}
Let $X$ be a ball Banach function space.
Then, using \cite[Proposition 2.3]{shyy2017},
we find that $X'$ is also a ball Banach function space.
\end{remark}

We now recall the concept of absolutely continuous
norms of ball Banach function spaces;
see, for instance, \cite[Chapter 1, Definition 3.1]{bs1988}
or \cite[Definition 3.2]{wyy2020}.

\begin{definition}
A ball Banach function space $X$ is said
to have an \emph{absolutely continuous norm}
if, for any $f\in X$ and any sequence $\{E_j\}_{j\in\mathbb{N}}$
of measurable sets
satisfying that $\mathbf{1}_{E_j}\to0$
almost everywhere as $j\to\infty$, one has
$\|f\mathbf{1}_{E_j}\|_X\to0$ as $j\to\infty$.
\end{definition}

To establish the
BSVY formula, we also need the following
concept of the endpoint boundedness of the
Hardy--Littlewood maximal operator $M$, which
can be found in \cite[Definition 2.14]{pyyz2023}.

\begin{definition}\label{def-epbd}
Let $X$ be a ball Banach function space.
Then $M$
is said to be \emph{endpoint bounded on $X'$}
if there exists a sequence
$\{\theta_m\}_{m\in{\mathbb N}}\subset(0,1)$
satisfying $\lim_{m\to\infty}\theta_m=1$ such that,
for any $m\in{\mathbb N}$,
$X^{\frac{1}{\theta_m}}$ is a ball Banach function space,
$M$ is bounded on $(X^{\frac{1}{\theta_m}})'$, and
$\lim_{m\to\infty}
\|M\|_{(X^{\frac{1}{\theta_m}})'\to(X^{\frac{1}{\theta_m}})'}
<\infty.$
\end{definition}

Next, we recall the
concepts of homogeneous and inhomogeneous ball Banach Sobolev spaces as follows,
which were originally introduced, respectively, in \cite[Definition 2.4]{dlyyz.arxiv}
and \cite[Definition 2.6]{dgpyyz2022}.

\begin{definition}
Let $X$ be a ball Banach function space.
\begin{enumerate}
  \item[{\rm(i)}] The \emph{homogeneous ball Banach
Sobolev space} $\dot{W}^{1,X}$
is defined to be the set of all
distributions $f$ on $\mathbb{R}^n$ such that $|\nabla f|\in X$ equipped
with the seminorm
$\|f\|_{\dot{W}^{1,X}}:=\|\,|\nabla f|\,\|_X$.
  \item[{\rm(ii)}] The \emph{inhomogeneous ball Banach Sobolev space}
$W^{1,X}$ is defined to be the set
of all $f\in X$ such that
$|\nabla f|\in X$ equipped
with the norm
$\|f\|_{W^{1,X}}:=\|f\|_X+\|\,|\nabla f|\,\|_X.$
\end{enumerate}
\end{definition}

\begin{remark}\label{loc}
Recall that the space $W^{1,1}_{\loc}$
is defined to be the set of all $f\in L^1_{\loc}$
such that $f\phi\in W^{1,1}$
for any $\phi\in C_{\rm c}^\infty$.
Then, as showed in \cite[Proposition 2.5]{dlyyz.arxiv},
for any ball Banach function space $X$,
$\dot{W}^{1,X}\subset W^{1,1}_{\loc}$.
\end{remark}

In what follows, for any $p\in [1,\infty)$
and $q\in (0,\infty)$, let
\begin{align*}\Gamma_{p,q}: =
\begin{cases}
(-\infty,-q)\cup (0,\infty) & \mbox{if}\ p = 1,\\
\mathbb{R}\setminus\{0\} & \mbox{if}\ p\in (1,\infty).
\end{cases}\end{align*}
Furthermore, for any given $r\in(0,\infty)$,
the \emph{centered ball average operator}
$\mathcal{B}_r$ is defined by setting,
for any $f\in L_{{\loc}}^1$ and $x\in\mathbb{R}^n$,
$
\mathcal{B}_r(f)(x):=\frac{1}{|B(x,r)|}\int_{B(x,r)}|f(y)|\,dy.
$
Then we have the following BSVY formula.

\begin{theorem}\label{xgamma<0}
Let $p\in [1,\infty)$, $q\in(0,\infty)$, $\gamma\in \mathbb{R}\setminus\{0\}$,
and $X$ be a ball Banach function space
having an absolutely continuous norm.
Assume that
both $n(\frac{1}{p} - \frac{1}{q})<1$ and $\gamma\in \Gamma_{p,q}$
or there exists $r\in (n,\infty)$ such that the Hardy--Littlewood
maximal operator $M$ is bounded on $X^{\frac{1}{r}}$.
Assume that
$X^{\frac1p}$ is a ball Banach function space
and that either of the following assumptions holds:
\begin{enumerate}
  \item[{\rm(a)}] $M$ is bounded on $(X^{\frac1p})'$;
  \item[{\rm(b)}] $p=1$, $M$ is endpoint bounded on $X'$,
and the centered ball average
operators $\{\mathcal{B}_r\}_{r\in(0,\infty)}$
are uniformly bounded on $X$.
\end{enumerate}
Then, for any $f\in \dot{W}^{1,X}$,
\begin{align}\label{xgamma<0e1}
\sup_{\lambda\in(0,\infty)}\lambda
\left\|\left[\int_{\mathbb{R}^n}
{\bf 1}_{E_{\lambda,\frac{\gamma}{q}}[f]}(\cdot,y)
|\cdot-y|^{\gamma-n}\,dy\right]^{\frac1q}\right\|_X
\sim\left\|\,\left|\nabla f\right|\,\right\|_X
\end{align}
with the positive equivalence constants
independent of $f$, where $E_{\lambda,\frac\gamma q}[f]$
is as in \eqref{Elambda}.
\end{theorem}

\begin{remark}\label{rem411}
\begin{enumerate}
\item[${\rm (i)}$] Let $X: = L^p$.
In this case, Theorem \ref{xgamma<0} when $p=q$ reduces to
the well-known BSVY formula obtained in \cite{bsvy.arxiv}
(see also \cite[Theorems 3 and 4]{bsvy}), which
when $p = 1$, $q \neq p$, $n\in \mathbb{N}\cap[2,\infty)$, and $\gamma\in (-\infty,-1)$
is still new.

\item[${\rm (ii)}$] Theorem \ref{xgamma<0}
gives an affirmative answer to the question in \cite[Remark 3.17(iv)]{zyy23-1}
by removing the restriction $n=1$ in assumption (d) of
\cite[Theorem 3.29]{zyy23-1}.
Moreover,
Theorem \ref{xgamma<0}
improves \cite[Theorems 3.29(i) and 3.35(i)]{zyy23-1} by expanding
the range of $q$ in the assumptions (a) and (d)
of \cite[Theorems 3.29 and 3.35]{zyy23-1} from $q\in(0,p]$ to
$q\in(0,\infty)$ with $n(\frac1p-\frac1q)<1$.
\item[{\rm(iii)}] As pointed out in
\cite[Remark 3.13(iii)]{zyy23-1}, the
assumption $n(\frac1p-\frac1q)<1$
in Theorem \ref{xgamma<0} is sharp
(see also \cite[Remark 1.2(iii)]{hlyyz25}).

\item[${\rm (iv)}$] In Theorem \ref{xgamma<0},
the assumption $\gamma\in \Gamma_{p,q}$
is sharp in the case where $X: = L^p$
and $q: = p$; see the details in \cite[Corollary 1.6]{bsvy.arxiv} for the sharpness
of $\gamma\in \Gamma_{p,q}$
with $p\in (1,\infty)$ and \cite[Proposition 6.1]{bsvy.arxiv} for
the sharpness of $\gamma\in \Gamma_{1,q}$.
\end{enumerate}
\end{remark}

We first show the following upper
estimate of the BSVY formula.

\begin{proposition}\label{upper}
Let $p\in [1,\infty)$, $q\in(0,\infty)$
satisfy $n(\frac1p- \frac1q)<1$,
and $\gamma\in\Gamma_{p,q}$.
Assume that $X$ is a ball Banach function space
satisfying that $M$ is bounded on $(X^{\frac1p})'$.
Then there exist a positive constant $C$,
independent of $\|M\|_{(X^{\frac1p})'\to (X^\frac1p)'}$,
and an increasing continuous function $\phi$
on $[0,\infty)$ such that,
for any $f\in C^\infty$
with $|\nabla f|\in C_{\rm c}$,
\begin{align}\label{uppere1}
\sup_{\lambda\in(0,\infty)}\lambda
\left\|\left[\int_{\mathbb{R}^n}
{\bf 1}_{E_{\lambda,\frac{\gamma}{q}}[f]}(\cdot,y)
|\cdot-y|^{\gamma-n}\,dy\right]^{\frac1q}\right\|_X
\le C\phi\left(\left\|M\right\|_{(X^{\frac1p})'\to
(X^{\frac1p})'}\right)
\left\|\,\left|\nabla f\right|\,\right\|_X.
\end{align}
Moreover, if $X$ has an absolutely continuous norm, then
\eqref{uppere1} holds for any
$f\in \dot{W}^{1,X}$.
\end{proposition}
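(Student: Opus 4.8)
The plan is to obtain \eqref{uppere1} by the method of extrapolation from a one-weight family of estimates on the Lebesgue spaces $L^p_\upsilon(\mathbb{R}^n)$ with $\upsilon\in A_p(\mathbb{R}^n)$, and to prove those weighted estimates by resolving the pointwise difference quotient into a superposition of oscillations on shifted dyadic cubes (in the spirit of Proposition \ref{point}) and then feeding the outcome into Proposition \ref{cddd-suff}. To set this up, fix $f\in C^\infty(\mathbb{R}^n)$ with $|\nabla f|\in C_{\rm c}(\mathbb{R}^n)$ and, for each $\lambda\in(0,\infty)$, put
\begin{align*}
F_\lambda:=\left[\int_{\mathbb{R}^n}{\bf 1}_{E_{\lambda,\frac{\gamma}{q}}[f]}(\cdot,y)\,|\cdot-y|^{\gamma-n}\,dy\right]^{\frac1q},\qquad G:=|\nabla f|.
\end{align*}
Since $X^{\frac1p}$ is a ball Banach function space and $M$ is bounded on $(X^{\frac1p})'$, the Rubio de Francia extrapolation theorem for ball Banach function spaces, applied simultaneously to the whole family $\{(F_\lambda,G)\}_{\lambda\in(0,\infty)}$, reduces the desired bound $\sup_{\lambda}\lambda\|F_\lambda\|_X\le C\phi(\|M\|_{(X^{\frac1p})'\to(X^{\frac1p})'})\|G\|_X$ (with $\phi$ increasing) to showing that there is an increasing continuous $\phi_0$ on $[0,\infty)$ such that, for every $\upsilon\in A_p(\mathbb{R}^n)$,
\begin{align*}
&\sup_{\lambda\in(0,\infty)}\lambda^p\int_{\mathbb{R}^n}\left[\int_{\mathbb{R}^n}{\bf 1}_{E_{\lambda,\frac{\gamma}{q}}[f]}(x,y)\,|x-y|^{\gamma-n}\,dy\right]^{\frac pq}\upsilon(x)\,dx\\
&\quad\le\phi_0\left([\upsilon]_{A_p(\mathbb{R}^n)}\right)\int_{\mathbb{R}^n}|\nabla f(x)|^p\upsilon(x)\,dx;
\end{align*}
taking $\sup_\lambda$ after extrapolating is legitimate because the extrapolated constant does not depend on $\lambda$.

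For the weighted estimate, put $\beta:=\frac1p+\frac{\gamma}{nq}$, so that $n(\beta-\frac1p)=\frac{\gamma}{q}$, $\beta p-1=\frac{p\gamma}{nq}$, and $\beta+1-\frac1p=1+\frac{\gamma}{nq}$; the hypothesis $\gamma\in\Omega_{p,q}$ says precisely that this $\beta$ falls in the admissible range of Proposition \ref{cddd-suff} ($\beta\ne\frac1p$ if $p\in(1,\infty)$, and $\beta\in(-\infty,1-\frac1n)\cup(1,\infty)$ if $p=1$). I would first establish a pointwise-difference analogue of Proposition \ref{point}: two applications of Lemma \ref{Poin} (to balls shrinking to $x$ and to balls shrinking to $y$, telescoped through one common ball of radius $\sim|x-y|$) give, for almost every $(x,y)$,
\begin{align*}
|f(x)-f(y)|&\lesssim\sum_{j\in\mathbb{Z}_+}\fint_{B(x,2^{-j+1}|x-y|)}\left|f-f_{B(x,2^{-j+1}|x-y|)}\right|\\
&\quad+\sum_{j\in\mathbb{Z}_+}\fint_{B(y,2^{-j+1}|x-y|)}\left|f-f_{B(y,2^{-j+1}|x-y|)}\right|,
\end{align*}
and then the pigeonhole argument \eqref{pigeon} yields, for almost every $(x,y)\in E_{\lambda,\frac{\gamma}{q}}[f]$, a scale $j_{x,y}\in\mathbb{Z}_+$, a shift $\alpha_{x,y}$, and a cube $Q_{x,y}\in\mathcal{D}^{\alpha_{x,y}}_{\lambda(j_{x,y}),\beta+1-\frac1p}[f]$ with $\lambda(j):=c_{(n,\beta,p,\varepsilon)}\lambda 2^{j[1+\frac{\gamma}{q}-\varepsilon]}$, $|x-y|\sim 2^{j_{x,y}}|Q_{x,y}|^{1/n}$, and either $(x,y)\in Q_{x,y}\times(C2^{j_{x,y}}Q_{x,y})$ (the ``$x$-side'', exactly the configuration \eqref{pointe4}) or $(x,y)\in(C2^{j_{x,y}}Q_{x,y})\times Q_{x,y}$ (the ``$y$-side''). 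Integrating $|x-y|^{\gamma-n}$ in $y$ as in \eqref{pointe6}, raising to the power $\frac pq$, integrating against $\upsilon\,dx$, and using the sparseness Lemma \ref{lem-sd} together with the $\ell^{p/q}$-subadditivity of Proposition \ref{point}(i) (when $q\ge p$) or the Minkowski-inequality argument of Proposition \ref{point}(ii) (when $q<p$), one bounds the inner double integral by $\sum_{j\in\mathbb{Z}_+}2^{j\theta_\bullet}\sum_{\alpha}\mathcal{S}_{j,\alpha}(\upsilon)$, where $\mathcal{S}_{j,\alpha}(\upsilon)$ equals $\sum_{Q\in\mathcal{D}^\alpha_{\lambda(j),\beta+1-\frac1p}[f]}|Q|^{\beta p-1}\upsilon(Q)$ for $q\ge p$ and its $\ell^{q/p}$-power variant for $q<p$; on the $x$-side $\theta_\bullet=n(\beta p-1)=\frac{p\gamma}{q}$ as in \eqref{eq2105}, while on the $y$-side there is an extra doubling factor $2^{jnp}$ coming from $\upsilon(C2^jQ)\le[\upsilon]_{A_p(\mathbb{R}^n)}2^{jnp}\upsilon(Q)$ (Lemma \ref{ApProperty}(ii)). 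Proposition \ref{cddd-suff} then gives $\mathcal{S}_{j,\alpha}(\upsilon)\le C\varphi([\upsilon]_{A_p(\mathbb{R}^n)})\lambda(j)^{-p}\int_{\mathbb{R}^n}|\nabla f|^p\upsilon$; since $\lambda(j)^{-p}\sim\lambda^{-p}2^{-jp[1+\frac{\gamma}{q}-\varepsilon]}$, the resulting geometric series in $j$ has exponent $p(\varepsilon-1)<0$ on the $x$-side, and on the $y$-side it is negative precisely under the standing hypothesis $n(\frac1p-\frac1q)<1$ once $\varepsilon$ is small (for $p>1$ this last step needs a bound for $\upsilon(C2^jQ)$ sharper than the crude $2^{jnp}[\upsilon]_{A_p(\mathbb{R}^n)}\upsilon(Q)$, for instance reducing the $y$-side via the Poincar\'e inequality to a weighted fractional-integral estimate for $M(|\nabla f|)$, so that the threshold is $n(\frac1p-\frac1q)<1$ rather than $n(1-\frac1q)<1$). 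Summing and taking $\sup_\lambda$ gives the weighted inequality above with $\phi_0$ a composition of $\varphi$ with the polynomial $A_p$-factors, and then extrapolation yields \eqref{uppere1} for all such $f$.

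For the last assertion, assume $X$ has an absolutely continuous norm. Then $\{g\in C^\infty(\mathbb{R}^n):\ |\nabla g|\in C_{\rm c}(\mathbb{R}^n)\}$ is dense in $\dot{W}^{1,X}(\mathbb{R}^n)$, so for $f\in\dot{W}^{1,X}(\mathbb{R}^n)$ we choose such $f_k$ with $|\nabla f_k|\to|\nabla f|$ in $X$; passing to a subsequence, $\nabla f_k\to\nabla f$ almost everywhere and, modulo constants, $f_k\to f$ almost everywhere, so ${\bf 1}_{E_{\lambda,\frac{\gamma}{q}}[f]}\le\liminf_{k\to\infty}{\bf 1}_{E_{\lambda,\frac{\gamma}{q}}[f_k]}$ almost everywhere, and the Fatou property of $X$ (Remark \ref{rm-bqbf}(iii)) together with \eqref{uppere1} for the $f_k$ gives \eqref{uppere1} for $f$. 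The main obstacle throughout is the $y$-side of the two-sided chain of averages, which has no counterpart in Proposition \ref{point}: it forces one to control sums over cubes $Q\in\mathcal{D}^\alpha_{\lambda(j),\beta+1-\frac1p}[f]$ weighted against the indicator of a large dilate $C2^jQ$ instead of $Q$ itself, and absorbing the attendant doubling loss into the scaling gain of Proposition \ref{cddd-suff} is exactly what consumes the hypothesis $n(\frac1p-\frac1q)<1$; keeping the overlaps of these dilated cubes under control via Lemma \ref{lem-sd}, and checking that the weighted constant built along the way is genuinely increasing in $[\upsilon]_{A_p(\mathbb{R}^n)}$ and in $\|M\|_{(X^{\frac1p})'\to(X^{\frac1p})'}$ (as required for the extrapolation step), are the remaining delicate points.
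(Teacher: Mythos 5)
Your overall architecture is the same as the paper's (split the difference quotient through an average over a ball of radius $\sim|x-y|$ into an ``$x$-side'' and a ``$y$-side'', run the pigeonhole/sparse-cube argument of Proposition \ref{point}, feed the resulting sums into Proposition \ref{cddd-suff}, and then pass to $X$ by a Rubio de Francia argument using the boundedness of $M$ on $(X^{\frac1p})'$, with a density/Fatou step for the absolutely continuous norm). The genuine gap is in your reduction: you reduce \eqref{uppere1} to the weighted inequality for \emph{every} $\upsilon\in A_p(\mathbb{R}^n)$ with constant $\phi_0([\upsilon]_{A_p(\mathbb{R}^n)})$ under only $n(\frac1p-\frac1q)<1$, and, as you yourself compute, your $y$-side estimate cannot deliver this when $p\in(1,\infty)$: with the only available doubling bound $\upsilon(2^jQ)\le[\upsilon]_{A_p(\mathbb{R}^n)}2^{jnp}\upsilon(Q)$ from Lemma \ref{ApProperty}(ii), the geometric series in $j$ converges only when $n(1-\frac1q)<1$, which is strictly stronger than the hypothesis. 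The parenthetical repair you offer (``reducing the $y$-side via the Poincar\'e inequality to a weighted fractional-integral estimate for $M(|\nabla f|)$'') is not carried out and is not obviously capable of producing a bound depending only on $[\upsilon]_{A_p(\mathbb{R}^n)}$ in the full range $n(\frac1p-\frac1q)<1$; indeed the paper's own weighted results (Theorem \ref{1657} and the proof of Theorem \ref{bsvy}) only reach $A_p$ weights under conditions governed by $p_\upsilon$, i.e.\ essentially $n(1-\frac1q)<1$ for a generic $A_p$ weight, so the intermediate statement you reduce to is, at best, unproved and quite possibly false in that generality.

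The fix is to notice that you never need the full $A_p$ family: the extrapolation step you invoke, when implemented concretely (as the paper does via Lemma \ref{4.6} and the Rubio de Francia operator of Lemma \ref{lem-extrap}), only tests the weighted inequality against the weights $R_{(X^{\frac1p})'}g$, which lie in $A_1(\mathbb{R}^n)$ with $[R_{(X^{\frac1p})'}g]_{A_1(\mathbb{R}^n)}\le 2\|M\|_{(X^{\frac1p})'\to(X^{\frac1p})'}$. Proving the weighted estimate only for $\upsilon\in A_1(\mathbb{R}^n)$ (the paper's \eqref{uppere2}, via Proposition \ref{point2}, which is stated precisely for $A_1$ weights) changes the $y$-side doubling loss from $2^{jnp}$ to $2^{jn}$, and then the exponent of the series becomes $p[\varepsilon-1+n(\frac1p-\frac1q)]$, which is negative exactly under the standing hypothesis $n(\frac1p-\frac1q)<1$. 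With that restriction your argument aligns with the paper's proof; also note that for $q\in(0,p)$ the $y$-side needs the additional duality/Rubio de Francia step of Proposition \ref{point2}(ii) (to move the indicator from $2^jQ$ back to $Q$), not merely the Minkowski argument of Proposition \ref{point}(ii).
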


To prove this proposition, we apply
the famous telescope method. Thus, we
need the following inequality;
see, for instance, \cite[p.\,59]{klv21}.

\begin{lemma}\label{Poin}
Let $M\in(0,\infty)$ and $f\in L^1_{\loc}$.
Then there exist a set $A\subset\mathbb{R}^n$ with $|A|=0$
and a positive constant $C_{(M,n)}$, depending only on
both $M$ and $n$, such that,
for any $x\in\mathbb{R}^n\setminus A$, any $r\in(0,\infty)$,
and any ball $B_1\subset B:=B(x,r)\subset MB_1$,
\begin{align*}
\left|f(x)-f_{B_1}\right|
\le C_{(M,n)}\sum_{j\in\mathbb{Z}_+}\fint_{2^{-j}B}
\left|f(y)-f_{2^{-j}B}\right|\,dy.
\end{align*}
\end{lemma}

We now use Lemma \ref{Poin} to establish
the following two key estimates of level sets.
In what follows, for any $x,y\in\mathbb{R}^n$,
let $B_{x,y}:=B(y,\frac{1}{20}|x-y|)$
and, for any $\lambda\in(0,\infty)$,
$b\in\mathbb{R}$, and $f\in L^1_{\loc}$, let
\begin{align}\label{df-e1}
E^{(1)}_{\lambda,b}[f]:=
\left\{(x,y)\in\mathbb{R}^n\times\mathbb{R}^n
:x\ne y,\ \frac{|f(x)-f_{B_{x,y}}|}{|x-y|^{1+b}}>\lambda\right\}.
\end{align}

\begin{proposition}\label{point}
Let $p\in[1,\infty)$, $q,\varepsilon,\lambda\in(0,\infty)$,
$\beta\in\mathbb{R}\setminus\{\frac1p\}$, and
$\upsilon\in L^1_{\loc}$ be nonnegative.
Then the following two statements hold.
\begin{enumerate}
\item[$\rm (i)$] If $q\in [p,\infty)$,
then there exists a positive constant $C_{(n,\beta,p,q)}$,
depending only on $n$, $\beta$, $p$, and $q$,
such that, for any $f\in L^1_{\loc}$,
\begin{align*}
&\int_{\mathbb{R}^n}
\left[\int_{\mathbb{R}^n}{\bf 1}_{
E^{(1)}_{\lambda,n(\beta-\frac1p)}[f]}(x,y)
|x-y|^{qn(\beta-\frac1p)-n}\,dy\right]^{\frac pq}
\upsilon(x)\,dx\notag\\
&\quad\le C_{(n,\beta,p,q)}
\sum_{j\in\mathbb{Z}_+}2^{jn(\beta p-1)}
\sum_{\alpha\in\{0,\frac13,\frac23\}^n}
\sum_{Q\in\mathcal{D}^\alpha_{\lambda(j),\beta+1-\frac1p}[f]}
|Q|^{\beta p-1}\upsilon(Q),
\end{align*}
where, for any
$j\in\mathbb{Z}_+$,
$\lambda(j):=c_{(n,\beta,p,\varepsilon)}\lambda
2^{j[1+n(\beta-\frac1p)-\varepsilon]}$
with a
positive constant $c_{(n,\beta,p,\varepsilon)}$
depending only on $n$, $\beta$, $p$, and $\varepsilon$.
\item[$\rm (ii)$] If $q\in (0,p)$, then there exists a positive constant
$\widetilde C:=C_{(n,\beta,p,q)}$,
depending only on $n$, $\beta$, $p$, and $q$,
such that, for any
$f\in C^\infty$ with
$|\nabla f|\in C_{\rm c}$.
\begin{align}\label{pointe7+}
&\int_{\mathbb{R}^n}
\left[\int_{\mathbb{R}^n}{\bf 1}_{
E^{(1)}_{\lambda,n(\beta-\frac1p)}[f]}(x,y)
|x-y|^{qn(\beta-\frac1p)-n}\,dy\right]^{\frac pq}
\upsilon(x)\,dx\notag\\
&\quad\le \widetilde C
\left\{\sum_{j\in\mathbb{Z}_+}2^{jqn(\beta-\frac1p)}
\sum_{\alpha\in\{0,\frac13,\frac23\}^n}\left[
\sum_{Q\in\mathcal{D}^\alpha_{\lambda(j),\beta+1-\frac1p}[f]}
|Q|^{\beta p-1}\upsilon(Q)\right]^{\frac qp}
\right\}^{\frac{p}{q}},
\end{align}
where, for any
$j\in\mathbb{Z}_+$,
$\lambda(j):=c_{(n,\beta,p,\varepsilon)}\lambda
2^{j[1+n(\beta-\frac1p)-\varepsilon]}$
with a
positive constant $c_{(n,\beta,p,\varepsilon)}$
depending only on $n$, $\beta$, $p$, and $\varepsilon$.
\end{enumerate}
\end{proposition}

\begin{proof}
For any given $f\in L^1_{\loc}$,
let the set $A$ be the same as in Lemma \ref{Poin}.
We first estimate
\begin{align*}
\int_{\mathbb{R}^n}{\bf 1}_{E^{(1)}_{\lambda,n(\beta-\frac1p)}[f]}
(\cdot,y)|\cdot-y|^{qn(\beta-\frac1p)-n}\,dy.
\end{align*}
To this end, for any $(x,y)\in E^{(1)}_{\lambda,n(\beta-
\frac1p)}[f]
\setminus (A\times\mathbb{R}^n)$,
using Lemma \ref{Poin} with $B_1:=B_{x,y}$,
$B:=B(x,2|x-y|)$, and $M:=60$, we find that
there exists a positive constant $C_{(n)}$,
depending only on $n$, such that
\begin{align}\label{pointe2}
\lambda|x-y|^{1+n(\beta-\frac1p)}
&<\left|f(x)-f_{B_{x,y}}\right|
\le C_{(n)}\sum_{j\in\mathbb{Z}_+}
\fint_{B(x,2^{-j+1}|x-y|)}
\left|f(z)-f_{B(x,2^{-j+1}|x-y|)}\right|\,dz.
\end{align}
In what follows,
let $c_0:=\frac{1-2^{-\varepsilon}}{C_{(n)}}$.
We claim that, for any $(x,y)\in E^{(1)}_{\lambda,n(\beta-\frac1p)}[f]
\setminus (A\times\mathbb{R}^n)$,
there exists $j_{x,y}\in\mathbb{Z}_+$ such that
\begin{align*}
c_0\lambda 2^{-j_{x,y}\varepsilon}|x-y|^{1+n(\beta-\frac1p)}
\le \fint_{B(x,2^{-j_{x,y}+1}|x-y|)}
\left|f(z)-f_{B(x,2^{-j_{x,y}+1}|x-y|)}\right|\,dz.
\end{align*}
Otherwise, it holds that
\begin{align*}
\frac{\lambda}{C_{(n)}}|x-y|^{1+n(\beta-\frac1p)}
&=c_0\lambda|x-y|^{1+n(\beta-\frac1p)}
\sum_{j\in\mathbb{Z}_+}2^{-j\varepsilon}\\
&>\sum_{j\in\mathbb{Z}_+}\fint_{B(x,2^{-j+1}|x-y|)}
\left|f(z)-f_{B(x,2^{-j+1}|x-y|)}\right|\,dz,
\end{align*}
which contradicts \eqref{pointe2}. This then finishes
the proof of the above claim.

In addition, from Lemma \ref{2115}(ii),
we infer that there exists a positive constant $\widetilde{C}_{(n)}$,
depending only on $n$,
such that, for any $(x,y)\in E^{(1)}_{\lambda,n(\beta
-\frac1p)}[f]\setminus (A\times\mathbb{R}^n)$,
there exist $\alpha_{x,y}\in \{0,\frac13,\frac23\}^n$
and $Q_{x,y}\in \mathcal{D}^{\alpha_{x,y}}$ satisfying that
$B(x,2^{-j_{x,y}+1}|x-y|)\subset Q_{x,y}\subset
B(x,2^{-j_{x,y}+1}\widetilde{C}_{(n)}|x-y|).$
Applying this and the above claim,
we conclude that, for any $(x,y)\in E^{(1)}_{\lambda,
n(\beta-\frac1p)}[f]
\setminus (A\times\mathbb{R}^n)$,
\begin{align*}
&2^{j_{x,y}[1+n(\beta-\frac1p)-\varepsilon]}\lambda
\left|Q_{x,y}\right|^{\beta-\frac1p+\frac1n}\\
&\quad=\lambda 2^{-j_{x,y}\varepsilon}
\left|2^{j_{x,y}}Q_{x,y}\right|^{\beta-\frac1p+\frac1n}
\sim\lambda 2^{-j_{x,y}\varepsilon}|x-y|^{n(\beta-
\frac1p)+1}\notag\\
&\quad\lesssim\fint_{B(x,2^{-j_{x,y}+1}|x-y|)}
\left|f(z)-f_{B(x,2^{-j_{x,y}+1}|x-y|)}\right|\,dz\\
&\quad\le 2\inf_{c\in\mathbb{C}}
\fint_{B(x,2^{-j_{x,y}+1}|x-y|)}
\left|f(z)-c\right|\,dz\lesssim\inf_{c\in\mathbb{C}}
\fint_{Q_{x,y}}
\left|f(z)-c\right|\,dz\le\fint_{Q_{x,y}}
\left|f(z)-f_{Q_{x,y}}\right|\,dz\\
&\quad=\left|Q_{x,y}\right|^{-2}\int_{Q_{x,y}}
\left|\int_{Q_{x,y}}
\left[f(z)-f(w)\right]\,dw\right|\,dz\notag\\
&\quad\le \left|Q_{x,y}\right|^{-1+\frac1n}
\left[\left|Q_{x,y}\right|^{-1-\frac1n}\int_{Q_{x,y}}
\int_{Q_{x,y}}|f(z)-f(w)|\,dw\,dz\right]
=\left|Q_{x,y}\right|^{-1+\frac1n}\omega_{Q_{x,y}}(f),
\end{align*}
which further implies that there exists a positive constant
$c_{(n,\beta,p,\varepsilon)}$,
depending only on $n$, $\beta$, $p$, and $\varepsilon$,
such that $Q_{x,y}\in \mathcal{D}^{\alpha_{x,y}}
_{\lambda(j_{x,y}),\beta+1-\frac1p}[f]$
with $\lambda(j_{x,y}):=c_{(n,\beta,p,\varepsilon)}
\lambda2^{j_{x,y}[1+n(\beta-\frac1p)-\varepsilon]}$ and
$\mathcal{D}^{\alpha_{x,y}}
_{\lambda(j_{x,y}),\beta+1-\frac1p}[f]$ defined the same as in
\eqref{df-ab} with $\alpha: = \alpha_{x,y}$,
$\lambda: = \lambda(j_{x,y})$,
and $b: = \beta+1-\frac1n$.
Furthermore, notice that, for any $(x,y)\in
E^{(1)}_{\lambda,n(\beta-\frac1p)}[f]
\setminus (A\times\mathbb{R}^n)$,
\begin{align}\label{pointe4}
(x,y)\in\left[2^{-j_{x,y}}B(x,2|x-y|)\right]
\times B(x,2|x-y|)\subset
Q_{x,y}\times 2^{j_{x,y}}Q_{x,y}
\end{align}
and
\begin{align}\label{pointe5}
|x-y|\sim\left|2^{j_{x,y}}Q_{x,y}\right|^{\frac1n},
\end{align}
where the positive equivalence constants depend only on $n$.
By \eqref{pointe4}, we obtain
\begin{align*}
E^{(1)}_{\lambda,n(\beta-\frac1p)}[f]
\setminus \left(A\times\mathbb{R}^n\right)
\subset\bigcup_{j\in\mathbb{Z}_+}
\bigcup_{\alpha\in\{0,\frac13,\frac23\}^n}
\bigcup_{Q\in\mathcal{D}^{\alpha}_{\lambda(j),\beta+1-\frac1p}[f]}
\left(Q\times 2^{j}Q\right).
\end{align*}
From this and \eqref{pointe5}, it further follows that,
for any $x\in\mathbb{R}^n\setminus A$,
\begin{align}\label{pointe6}
&\int_{\mathbb{R}^n}{\bf 1}_{E^{(1)}_{\lambda,n(\beta-\frac1p)}[f]}
(x,y)|x-y|^{qn(\beta-\frac1p)-n}\,dy\notag\\
&\quad\lesssim\sum_{j\in\mathbb{Z}_+}
\sum_{\alpha\in\{0,\frac13,\frac23\}^n}
\sum_{Q\in\mathcal{D}^{\alpha}_{\lambda(j),\beta+1-
\frac1p}[f]}
\int_{2^jQ}\left|2^jQ\right|^{q(\beta-\frac1p)-1}
\,dy{\bf 1}_{Q}(x)\notag\\
&\quad=\sum_{j\in\mathbb{Z}_+}
\sum_{\alpha\in\{0,\frac13,\frac23\}^n}
\sum_{Q\in\mathcal{D}^{\alpha}_{\lambda(j),\beta+1-\frac1p}[f]}
\left|2^jQ\right|^{q(\beta-\frac1p)}{\bf 1}_{Q}(x),
\end{align}
where the implicit positive constant depends only on $n$, $\beta$,
$p$, and $q$.

Next, we prove (i). Assume that $q\in [p,\infty)$.
Using this, \eqref{pointe6}, the fact that, for any $r\in(0,1]$
and $\{a_j\}_{j\in\mathbb{N}}\subset\mathbb{C}$,
$(\sum_{j\in\mathbb{N}}|a_j|)^r
\le\sum_{j\in\mathbb{N}}|a_j|^r,$
and Tonelli's theorem,
we find that
\begin{align}\label{eq2105}
&\int_{\mathbb{R}^n}
\left[\int_{\mathbb{R}^n}{\bf 1}_{E^{(1)}_
{\lambda,n(\beta-\frac1p)}}(x,y)|x-y|^{qn(\beta-\frac1p)-n}\,dy\right]
^{\frac pq}\upsilon(x)\,dx\notag\\
&\quad\lesssim\int_{\mathbb{R}^n}
\left\{\sum_{j\in\mathbb{Z}_+}\sum_{\alpha\in
\{0,\frac13,\frac23\}^n}\sum_{Q\in\mathcal{D}^{\alpha}
_{\lambda(j),\beta+1-\frac1p}[f]}
\left|2^jQ\right|^{q(\beta-\frac1p)}{\bf 1}_Q(x)\right\}
^{\frac pq}\upsilon(x)\,dx\notag\\
&\quad\le\sum_{j\in\mathbb{Z}_+}2^{jn(\beta p-1)}
\sum_{\alpha\in\{0,\frac13,\frac23\}^n}\sum_{Q\in\mathcal{D}^{\alpha}
_{\lambda(j),\beta+1-\frac1p}[f]}|Q|^{\beta p-1}\upsilon(Q),
\end{align}
where the implicit positive constant depends only on $n$, $\beta$,
$p$, and $q$. This shows (i).

Now, we prove (ii). Assume that $q\in (0,p)$.
By this, \eqref{pointe6},
and Minkowski's inequality, we find that
\begin{align}\label{gamma<0e1-z}
&\left\{\int_{\mathbb{R}^n}
\left[\int_{\mathbb{R}^n}{\bf 1}_{E^{(1)}_
{\lambda,n(\beta-\frac1p)}}(x,y)|x-y|^{qn(\beta-
\frac1p)-n}\,dy\right]
^{\frac pq}\upsilon(x)\,dx \right\}^{\frac qp}\notag\\
&\quad\le\sum_{j\in\mathbb{Z}_+}\sum_{\alpha\in
\{0,\frac13,\frac23\}^n}\left\{\int_{\mathbb{R}^n}
\left[\sum_{Q\in\mathcal{D}^{\alpha}
_{\lambda(j),\beta+1-\frac1p}[f]}
\left|2^jQ\right|^{q(\beta-\frac1p)}
{\bf 1}_Q(x)\right]^{\frac pq}\upsilon(x)\,dx\right\}
^{\frac qp}.
\end{align}
From this and Lemma \ref{lem-sd}
with $r: = q$ and $\lambda: = \lambda(j)$,
we deduce that, for any $x\in\mathbb{R}^n$,
\begin{align*}
\left[\sum_{Q\in\mathcal{D}^{\alpha}
_{\lambda(j),\beta+1-\frac1p}[f]}
\left|2^jQ\right|^{q(\beta-\frac1p)}
{\bf 1}_Q(x)\right]^{\frac pq}
&\lesssim2^{jn(\beta p -1)}\left|Q_x\right|^{\beta p-1}\\
&\le2^{jn(\beta p -1)}
\sum_{Q\in\mathcal{D}^{\alpha}_{\lambda(j),\beta
+1-\frac1p}[f]}
|Q|^{\beta p-1}{\bf 1}_Q(x),
\end{align*}
where $Q_x$ is the same as in Lemma \ref{lem-sd}.
This, combined with \eqref{gamma<0e1-z}, further implies that
\eqref{pointe7+} and hence (ii) holds.
This then finishes the proof of Proposition \ref{point}.
\end{proof}

On the other hand,
for any $\lambda\in(0,\infty)$,
$b\in\mathbb{R}$, and $f\in L^1_{\loc}$,
let
\begin{align}\label{df-e2}
E^{(2)}_{\lambda,b}[f]:=
\left\{(x,y)\in\mathbb{R}^n\times \mathbb{R}^n
:x\ne y,\ \frac{|f(y)-f_{B_{x,y}}|}{|x-y|^{1+b}}>\lambda\right\}.
\end{align}
We now turn to prove the following estimate of
the level set $E^{(2)}_{\lambda,b}[f]$.

\begin{proposition}\label{point2}
Let $p\in [1,\infty)$, $q,\varepsilon,\lambda\in(0,\infty)$,
$\beta\in \mathbb{R}\setminus\{\frac1p\}$,
$\upsilon\in A_1$, and $f\in C^\infty$ with
$|\nabla f|\in C_{\rm c}$.
Then the following two statements hold.
\begin{enumerate}
\item[$\rm (i)$] If $q\in [p,\infty)$,
then there exists a positive constant $\widetilde C:=C_{(n,\beta,p,q)}$,
depending only on $n$, $\beta$, $p$, and $q$,
such that
\begin{align*}
&\int_{\mathbb{R}^n}
\left[\int_{\mathbb{R}^n}{\bf 1}_{
E^{(2)}_{\lambda,n(\beta-\frac1p)}[f]}(x,y)
|x-y|^{qn(\beta-\frac1p)-n}\,dy\right]^{\frac pq}
\upsilon(x)\,dx\notag\\
&\quad\le \widetilde C[\upsilon]_{A_1}
\sum_{j\in\mathbb{Z}_+}2^{jn(\beta p-1)}
\sum_{\alpha\in\{0,\frac13,\frac23\}^n}
\sum_{Q\in\mathcal{D}^\alpha_{\lambda(j),\beta+1-\frac1p}[f]}
|Q|^{\beta p-1}\upsilon(Q),
\end{align*}
where, for any
$j\in\mathbb{Z}_+$,
$\lambda(j):=c_{(n,\beta,q,\varepsilon)}\lambda
2^{j[1+n(\beta-\frac1p)-\varepsilon]}$
with a
positive constant $c_{(n,\beta,q,\varepsilon)}$
depending only on $n$, $\beta$, $q$, and $\varepsilon$.
\item[$\rm (ii)$] If $q\in (0,p)$, then there exists a positive constant
$\widetilde C:=C_{(n,\beta,p,q)}$,
depending only on $n$, $\beta$, $p$, and $q$,
such that
\begin{align*}
&\int_{\mathbb{R}^n}
\left[\int_{\mathbb{R}^n}{\bf 1}_{
E^{(2)}_{\lambda,n(\beta-\frac1p)}[f]}(x,y)
|x-y|^{qn(\beta-\frac1p)-n}\,dy\right]^{\frac pq}
\upsilon(x)\,dx\notag\\
&\quad\le \widetilde C
[\upsilon]_{A_1}^{\frac{p}{q}}
\left\{\sum_{j\in\mathbb{Z}_+}2^{jqn(\beta-\frac1p)}
\sum_{\alpha\in\{0,\frac13,\frac23\}^n}\left[
\sum_{Q\in\mathcal{D}^\alpha_{\lambda(j),\beta+1-\frac1p}[f]}
|Q|^{\beta p-1}\upsilon(Q)\right]^{\frac qp} \right\}^{\frac{p}{q}},
\end{align*}
where, for any
$j\in\mathbb{Z}_+$,
$\lambda(j):=c_{(n,\beta,q,\varepsilon)}\lambda
2^{j[1+n(\beta-\frac1p)-\varepsilon]}$
with a
positive constant $c_{(n,\beta,q,\varepsilon)}$
depending only on $n$, $\beta$, $q$, and $\varepsilon$.
\end{enumerate}
\end{proposition}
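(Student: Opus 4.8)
Here is the strategy. The plan is to run the proof of Proposition \ref{point} almost verbatim, the sole structural novelty being that, since in \eqref{df-e2} the ball $B_{x,y}$ is centred at $y$ rather than at $x$, the dyadic cube produced by the pigeonhole construction will contain $y$, so that $x$ ends up in a fixed dilate of it instead of in the cube itself; this dilate is then absorbed by the $A_1$-doubling of $\upsilon$, which is why $[\upsilon]_{A_1(\mathbb R^n)}$ enters. Concretely, fix $f$, let $A$ be the null set provided by Lemma \ref{Poin} for this $f$, and fix $(x,y)\in E^{(2)}_{\lambda,n(\beta-\frac1p)}[f]$ with $y\notin A$. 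Applying Lemma \ref{Poin} with centre $y$, with $B_1:=B_{x,y}$, $B:=B(y,2|x-y|)$ and $M:=40$, and then arguing exactly as for \eqref{pigeon}, we obtain $j_{x,y}\in\mathbb{Z}_+$ such that
\begin{align*}
c_0\lambda2^{-j_{x,y}\varepsilon}|x-y|^{1+n(\beta-\frac1p)}
\le\fint_{B(y,2^{-j_{x,y}+1}|x-y|)}\left|f(z)-f_{B(y,2^{-j_{x,y}+1}|x-y|)}\right|\,dz.
\end{align*}
By Lemma \ref{2115}(ii) there are $\alpha_{x,y}\in\{0,\frac13,\frac23\}^n$ and $Q_{x,y}\in\mathcal{D}^{\alpha_{x,y}}$ with $B(y,2^{-j_{x,y}+1}|x-y|)\subset Q_{x,y}\subset C_{(n)}B(y,2^{-j_{x,y}+1}|x-y|)$, and the same chain of estimates as in the proof of Proposition \ref{point} (bounding the right-hand side above by $2\inf_{c}\fint_{Q_{x,y}}|f-c|\le\fint_{Q_{x,y}}|f-f_{Q_{x,y}}|$ and then by $|Q_{x,y}|^{-1+\frac1n}\omega_{Q_{x,y}}(f)$) shows $Q_{x,y}\in\mathcal{D}^{\alpha_{x,y}}_{\lambda(j_{x,y}),\beta+1-\frac1p}[f]$ with $\lambda(j):=c_{(n,\beta,q,\varepsilon)}\lambda2^{j[1+n(\beta-\frac1p)-\varepsilon]}$. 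Since $y\in Q_{x,y}$ and $|x-y|\sim|2^{j_{x,y}}Q_{x,y}|^{\frac1n}$ (two-sided, constant depending only on $n$), there is $\kappa_{(n)}\in[1,\infty)$ depending only on $n$ with $x\in\kappa_{(n)}2^{j_{x,y}}Q_{x,y}$. Running through this single triple $(j_{x,y},\alpha_{x,y},Q_{x,y})$, replacing $|x-y|^{qn(\beta-\frac1p)-n}$ by the comparable $|2^{j_{x,y}}Q_{x,y}|^{q(\beta-\frac1p)-1}$, and then enlarging to the full sum, we arrive at the pointwise bound, valid for every $x\in\mathbb{R}^n$,
\begin{align*}
\int_{\mathbb{R}^n}{\bf 1}_{E^{(2)}_{\lambda,n(\beta-\frac1p)}[f]}(x,y)|x-y|^{qn(\beta-\frac1p)-n}\,dy
\lesssim\sum_{j\in\mathbb{Z}_+}\sum_{\alpha\in\{0,\frac13,\frac23\}^n}\sum_{Q\in\mathcal{D}^\alpha_{\lambda(j),\beta+1-\frac1p}[f]}2^{-jn}\left|2^jQ\right|^{q(\beta-\frac1p)}{\bf 1}_{\kappa_{(n)}2^jQ}(x),
\end{align*}
the new features relative to \eqref{pointe6} being the extra factor $2^{-jn}$ (one integrates ${\bf 1}_Q$, not ${\bf 1}_{2^jQ}$, in $y$) and the dilate $\kappa_{(n)}2^jQ$ in place of $Q$.

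For part (i), where $q\in[p,\infty)$ so $\frac pq\le1$, I would raise the last display to the power $\frac pq$, bring it inside the sum by subadditivity, integrate against $\upsilon$, and absorb the dilate by the $A_1$-doubling estimate $\upsilon(\kappa_{(n)}2^jQ)\le[\upsilon]_{A_1(\mathbb{R}^n)}(\kappa_{(n)}2^j)^n\upsilon(Q)$, which is Lemma \ref{ApProperty}(ii) with $p=1$ applied to the pair $Q\subset\kappa_{(n)}2^jQ$. Collecting the powers of $2^{jn}$, namely $2^{-jnp/q}\cdot2^{jn(\beta p-1)}\cdot2^{jn}=2^{jnp(\beta-\frac1q)}$, yields precisely the estimate in (i), with $\widetilde C$ absorbing $\kappa_{(n)}^n$ together with the remaining $(n,\beta,p,q)$-constants. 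This is the only use of the weight hypothesis, and it forces $\upsilon\in A_1$: for a generic $\upsilon\in A_p$ the doubling bound would carry an extra factor $2^{jn(p-1)}$ ruining the $j$-summation.

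For part (ii), where $q\in(0,p)$, subadditivity is unavailable, so I would instead apply the Minkowski inequality in $L^{p/q}(\upsilon\,dx)$ (legitimate since $\frac pq\ge1$) to pull the sum over $(j,\alpha)$ outside the norm, reducing matters to controlling, for each fixed $(j,\alpha)$, the quantity $\big\|\sum_{Q}|Q|^{q(\beta-\frac1p)}{\bf 1}_{\kappa_{(n)}2^jQ}\big\|_{L^{p/q}(\upsilon)}$. The point where the dilates must be handled with care is exactly here: for fixed $j$ the family $\{\kappa_{(n)}2^jQ:\ Q\in\mathcal{D}^\alpha\}$ is a single dilation of the shifted dyadic grid $\mathcal{D}^\alpha$, hence is still nested, while by \eqref{min} and \eqref{max} the cubes $Q\in\mathcal{D}^\alpha_{\lambda(j),\beta+1-\frac1p}[f]$ have edge lengths bounded from below (if $\beta<\frac1p$) or from above (if $\beta>\frac1p$). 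Therefore, for each fixed $x$ and $j$, the cubes $\kappa_{(n)}2^jQ$ with $Q$ in the level set and $x\in\kappa_{(n)}2^jQ$ form a chain possessing a minimal (resp. maximal) element $\kappa_{(n)}2^jQ^*_{x,j}$, and repeating the geometric-series argument in the proof of Lemma \ref{lem-sd} gives $\sum_{Q}|Q|^{q(\beta-\frac1p)}{\bf 1}_{\kappa_{(n)}2^jQ}(x)\lesssim|Q^*_{x,j}|^{q(\beta-\frac1p)}$. Raising this to the power $\frac pq$ and using $|Q^*_{x,j}|^{\beta p-1}\le\sum_Q|Q|^{\beta p-1}{\bf 1}_{\kappa_{(n)}2^jQ}(x)$, then integrating against $\upsilon$ and again invoking $A_1$-doubling, one bounds the inner $L^{p/q}(\upsilon)$-norm by $[\upsilon]_{A_1(\mathbb{R}^n)}^{q/p}2^{jn(\beta q-1)}[\sum_Q|Q|^{\beta p-1}\upsilon(Q)]^{q/p}$; summing over $(j,\alpha)$, using $2^{jn(\beta q-1)}\le2^{jqn(\beta-\frac1p)}$ for $j\ge0$ together with $[\upsilon]_{A_1(\mathbb{R}^n)}^{q/p}\le[\upsilon]_{A_1(\mathbb{R}^n)}$, and raising to the power $\frac pq$, gives exactly the estimate in (ii). The main obstacle throughout is precisely this passage from the cube $Q$ (which in Proposition \ref{point} contained $x$) to its dilate $\kappa_{(n)}2^jQ$; it is resolved by $A_1$-doubling for the weighted integral and, in the range $q<p$, by the observation that dilating a shifted dyadic grid preserves its nestedness, so that the sparseness mechanism of Lemma \ref{lem-sd} still applies.
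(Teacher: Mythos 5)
Your treatment of part (i) is fine and matches the paper's: the swapped pigeonhole argument yields the covering of $E^{(2)}_{\lambda,n(\beta-\frac1p)}[f]$ by sets $2^jQ\times Q$ (the paper's \eqref{pointe22}, your $\kappa_{(n)}2^jQ$ being an inessential cosmetic variant), and for $q\ge p$ the subadditivity of $t\mapsto t^{p/q}$ plus the $A_1$-doubling $\upsilon(2^jQ)\le[\upsilon]_{A_1(\mathbb{R}^n)}2^{jn}\upsilon(Q)$ gives exactly the weight $2^{jnp(\beta-\frac1q)}$.

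Part (ii), however, contains a genuine gap. Your key step is the bound $\sum_{Q}|Q|^{q(\beta-\frac1p)}\mathbf{1}_{\kappa_{(n)}2^jQ}(x)\lesssim|Q^*_{x,j}|^{q(\beta-\frac1p)}$ with a constant independent of $j$, justified by the claim that $\{\kappa_{(n)}2^jQ:\ Q\in\mathcal{D}^\alpha\}$ is ``still nested'' because it is a single dilation of the grid. This is false: $\lambda Q$ here is the \emph{concentric} dilate of each cube about its own center, and concentric dilation destroys the dyadic structure (already in $\mathbb{R}$, the dilates $3(0,1]=(-1,2]$ and $3(1,2]=(0,3]$ overlap with neither containing the other). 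Consequently, for a fixed point $x$ and a fixed scale there can be of order $2^{jn}$ cubes $Q$ of that scale with $x\in\kappa_{(n)}2^jQ$, so the correct form of your sparse bound carries an extra factor $2^{jn}$; a configuration with $\sim 2^{jn}$ equal-size cubes of the level set near $x$ shows the factor cannot be removed. Propagating this loss through your computation turns the per-$j$ weight $2^{jn(q\beta-1)}$ into $2^{jnq\beta}$, which exceeds the required $2^{jqn(\beta-\frac1p)}$ by $2^{jnq/p}$, while the slack you identified is only $2^{jn(1-\frac qp)}$; so the argument does not yield the stated inequality. The paper avoids this precisely because it never applies the sparseness lemma to dilated cubes: starting from \eqref{gamma<22e1-z} it dualizes against $g\in L^{r'}_\mu(\mathbb{R}^n)$ with $r=\frac pq$ and $\mu=\upsilon^{1-r'}$, replaces $g$ by the Rubio de Francia majorant $R_{L^{r'}_\mu(\mathbb{R}^n)}g\in A_1(\mathbb{R}^n)$ from Lemma \ref{lem-extrap}, and uses \emph{its} $A_1$-doubling to pass from $R g(2^jQ)$ to $2^{jn}Rg(Q)$ (a single factor $2^{jn}$), then applies the H\"older inequality and (iv) and (v) of Lemma \ref{ApProperty} to return to $\|\sum_Q|Q|^{q(\beta-\frac1p)}\mathbf{1}_Q\|_{L^{p/q}_\upsilon(\mathbb{R}^n)}$ with the \emph{original nested} cubes, to which Lemma \ref{lem-sd} legitimately applies. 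To repair your proof you would need to replace the dilated-cube sparseness step by such a duality/extrapolation argument (or another mechanism that transfers the mass from $2^jQ$ back to $Q$ before invoking Lemma \ref{lem-sd}).
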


To establish this estimate,
we need several
auxiliary lemmas.
The following one about
weighted Lebesgue spaces is important; see, for instance,
\cite[Section 7.1]{shyy2017} and \cite[Remarks 2.7 and 3.4]{wyy2020}.

\begin{lemma}\label{Lpv}
Let $p\in[1,\infty)$ and $\upsilon\in
A_p$. Then
\begin{enumerate}
  \item[{\rm(i)}] $L^p_\upsilon$
  is a ball Banach function space having an absolutely
  continuous norm;
  \item[{\rm(ii)}] for any $s\in(0,\infty)$,
  $(L^p_{\upsilon})^{s}=
  L^{ps}_\upsilon$;
  \item[{\rm(iii)}] $(L^p_\upsilon)'
  =L^{p'}_{\upsilon^{1-p'}}$.
\end{enumerate}
\end{lemma}

To show Proposition \ref{point2}, we also need
the following two technical lemmas about extrapolation,
the first one is precisely \cite[Lemma 4.6]{dlyyz.arxiv}
and the second one can be found in \cite[Lemma 3.3]{zyy23-1}.

\begin{lemma}\label{lem-extrap}
Let $X$ be a ball Banach function space
such that the Hardy--Littlewood maximal operator
$M$ is bounded on $X$.
For any $g\in X$ and $x\in\mathbb{R}^n$,
let $R_{X}g(x) :
= \sum_{k=0}^{\infty}\frac{M^k g(x)}{2^k \|M\|^k_{
X\to X}},$
where, for any $k\in\mathbb{N}$, $M^k$ is the $k$-fold iteration of $M$
and $M^0g(x): = |g(x)|$. Then,
for any $g\in X$,
\begin{enumerate}
\item[\rm (i)] for any $x\in \mathbb{R}^n$, $|g(x)| \le
R_{X}g(x)$;
\item[\rm (ii)] $R_{X}
g\in A_1$ and $[R_{X}g]_{A_1} \le 2
\|M\|_{X\to X}$;
\item[\rm (iii)] $\|R_{X}g\|_{X} \le 2\|g\|_
{X}$.
\end{enumerate}
\end{lemma}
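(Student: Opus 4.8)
This is Rubio de Francia's iteration algorithm in the setting of ball Banach function spaces, so the plan is to verify the three listed properties of the operator $R_X$ directly from its definition as the geometric series $R_Xg=\sum_{k=0}^{\infty}M^kg/(2^k\|M\|_{X\to X}^k)$. The one preliminary observation I would record is that $\|M\|_{X\to X}\ge 1$: by the Lebesgue differentiation theorem, $Mh\ge|h|$ almost everywhere for every $h\in L^1_{\rm loc}(\mathbb{R}^n)$, hence, by the lattice property in Definition~\ref{1659}(ii) (and using $X\subset L^1_{\rm loc}(\mathbb{R}^n)$ from Definition~\ref{1659}(vi)), $\|Mh\|_X\ge\|h\|_X$ for all $h\in X$. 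In particular the summands of $R_Xg$ are dominated by $2^{-k}$-multiples of iterated maximal functions of $g$, so the series converges almost everywhere and the termwise manipulations below are legitimate.

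For (i), the $k=0$ summand of $R_Xg$ is exactly $M^0g=|g|$ and all remaining summands are nonnegative, so $|g|\le R_Xg$ pointwise. For (iii), I would use countable subadditivity of $\|\cdot\|_X$---which follows by applying the triangle inequality in Definition~\ref{1659}(v) to the partial sums and then passing to the limit via the Fatou property recorded in Remark~\ref{rm-bqbf}(iii)---to get $\|R_Xg\|_X\le\sum_{k=0}^{\infty}\|M^kg\|_X/(2^k\|M\|_{X\to X}^k)$, and then iterate the boundedness of $M$ on $X$ (with $\|M^0g\|_X=\|g\|_X$) to bound $\|M^kg\|_X\le\|M\|_{X\to X}^k\|g\|_X$, which yields $\|R_Xg\|_X\le\|g\|_X\sum_{k=0}^{\infty}2^{-k}=2\|g\|_X$.

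For (ii), the heart of the matter is the pointwise bound $M(R_Xg)(x)\le 2\|M\|_{X\to X}\,R_Xg(x)$ for almost every $x\in\mathbb{R}^n$; once this is in hand, it is precisely the statement that $R_Xg\in A_1(\mathbb{R}^n)$ with $[R_Xg]_{A_1(\mathbb{R}^n)}\le 2\|M\|_{X\to X}$, by the definition of the $A_1(\mathbb{R}^n)$ constant in Definition~\ref{1557}(i) (equivalently, by the elementary converse of Lemma~\ref{ApProperty}(i); here I also use that $R_Xg\in X\subset L^1_{\rm loc}(\mathbb{R}^n)$ by (iii), and that $R_Xg>0$ almost everywhere when $g\not\equiv0$, the case $g\equiv0$ being trivial). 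To prove the pointwise bound, I would use the sublinearity of $M$ over countable sums of nonnegative functions---a consequence of monotone convergence---to write $M(R_Xg)\le\sum_{k=0}^{\infty}M^{k+1}g/(2^k\|M\|_{X\to X}^k)$, and then reindex: this series equals $2\|M\|_{X\to X}\sum_{m=1}^{\infty}M^mg/(2^m\|M\|_{X\to X}^m)$, which is at most $2\|M\|_{X\to X}\,R_Xg$ because it is $R_Xg$ with its nonnegative $m=0$ term deleted. I do not foresee a genuine obstacle; the only points needing a word of justification are the countable (sub)additivity of $\|\cdot\|_X$ and the termwise action of $M$ on the series, both dispatched as above.
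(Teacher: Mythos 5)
Your proof is correct and is the standard Rubio de Francia iteration argument: the paper itself gives no proof of Lemma \ref{lem-extrap}, quoting it directly from \cite[Lemma 4.6]{dlyyz.arxiv}, and your verification of (i)--(iii) (termwise norm estimate with the Fatou property, and the pointwise bound $M(R_Xg)\le 2\|M\|_{X\to X}R_Xg$ via countable subadditivity and reindexing) is exactly the expected route. The only cosmetic point is the degenerate case $g=0$ almost everywhere, where $R_Xg\equiv 0$ is not an $A_1(\mathbb{R}^n)$-weight in the strict sense of Definition \ref{1557}(i); this is the usual tacit convention and you flag it, so no genuine gap remains.
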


\begin{lemma}\label{4.6}
Let $X$ be a ball Banach function space and $p\in[1,\infty)$.
Assume that $X^\frac{1}{p}$ is a ball Banach function space and
$M$ is bounded on $Y:=(X^\frac{1}{p})'$.
Then, for any $f\in X$,
$$
\|f\|_X\leq\sup_{\{g\in Y:\|g\|_{Y}\leq1\}}
\left[\int_{\mathbb{R}^n}
\left|f(x)\right|^pR_{Y}g(x)\,dx\right]^\frac{1}{p}
\leq2^\frac{1}{p}\|f\|_X.
$$
\end{lemma}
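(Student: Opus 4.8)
The plan is to deduce the lemma from three ingredients already available: the definition of the convexification, the self-duality $Y=Y''$ (with equal norms) of ball Banach function spaces, and the two basic properties of the Rubio de Francia--type operator $R_{(X^{1/p})'}$ recorded in Lemma \ref{lem-extrap}.

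\emph{Step 1: Rewrite $\|f\|_X^p$ as a norm on $X^{1/p}$.} For $f\in X$ one has $|f|\in X$, hence $(|f|^p)^{1/p}=|f|\in X$, so $|f|^p\in X^{1/p}$ (which is a ball Banach function space by hypothesis), and, by Definition \ref{tuhua},
\begin{align*}
\big\|\,|f|^p\,\big\|_{X^{1/p}}=\big\|\,(|f|^p)^{1/p}\,\big\|_X^{p}=\|f\|_X^{p}.
\end{align*}

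\emph{Step 2: Dualize.} Since $X^{1/p}$ is a ball Banach function space, it has the Fatou property (see Remark \ref{rm-bqbf}(iii)) and hence satisfies $X^{1/p}=(X^{1/p})''$ with equal norms (see, e.g., \cite{shyy2017}); moreover $(X^{1/p})'$ is again a ball Banach function space (see Remark \ref{2143}). Noting that $\|\,|g|\,\|_{(X^{1/p})'}=\|g\|_{(X^{1/p})'}$ and that $R_{(X^{1/p})'}g$ depends only on $|g|$, we may restrict the associate-norm supremum to nonnegative $g$ and write
\begin{align*}
\|f\|_X^{p}=\big\|\,|f|^p\,\big\|_{X^{1/p}}
=\sup_{\|g\|_{(X^{1/p})'}\le1}\int_{\mathbb{R}^n}|f(x)|^{p}g(x)\,dx.
\end{align*}

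\emph{Step 3: The two inequalities.} For the first, since $M$ is bounded on $(X^{1/p})'$, Lemma \ref{lem-extrap}(i) gives $g(x)\le R_{(X^{1/p})'}g(x)$ for almost every $x\in\mathbb{R}^n$, so each integral in Step 2 is at most $\int_{\mathbb{R}^n}|f(x)|^{p}R_{(X^{1/p})'}g(x)\,dx$; taking the supremum over $g$ and then $p$-th roots (legitimate since $t\mapsto t^{1/p}$ is increasing) yields $\|f\|_X\le\sup_{\|g\|_{(X^{1/p})'}\le1}[\int_{\mathbb{R}^n}|f(x)|^{p}R_{(X^{1/p})'}g(x)\,dx]^{1/p}$. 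For the second, fix $g$ with $\|g\|_{(X^{1/p})'}\le1$; by the H\"older inequality for the dual pair $(X^{1/p},(X^{1/p})')$, Step 1, and Lemma \ref{lem-extrap}(iii),
\begin{align*}
\int_{\mathbb{R}^n}|f(x)|^{p}R_{(X^{1/p})'}g(x)\,dx
\le\big\|\,|f|^p\,\big\|_{X^{1/p}}\big\|R_{(X^{1/p})'}g\big\|_{(X^{1/p})'}
\le2\|f\|_X^{p}\|g\|_{(X^{1/p})'}\le2\|f\|_X^{p};
\end{align*}
taking the supremum over $g$ and then $p$-th roots gives the bound $2^{1/p}\|f\|_X$.

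Since all the needed tools are in place, there is no substantial obstacle; the only points requiring care are the bookkeeping of the convexification exponents in Step 1 (in particular, that $|f|^p$ genuinely belongs to $X^{1/p}$ and that $\|\,|f|^p\,\|_{X^{1/p}}=\|f\|_X^{p}$) and the reduction in Step 2 to nonnegative test functions $g$, which is exactly what makes the passage from $g$ to $R_{(X^{1/p})'}g$ meaningful.
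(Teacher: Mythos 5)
Your argument is correct. Note that the paper does not prove this lemma at all: it is quoted verbatim from \cite[Lemma 3.3]{zyy23-1}, and the proof there is essentially the one you give, so there is nothing genuinely different in route. Your three steps are exactly the standard ones: the convexification bookkeeping $\|\,|f|^p\,\|_{X^{1/p}}=\|f\|_X^p$ (which you handle correctly with Definition \ref{tuhua} applied with exponent $\frac1p$), the recovery of the norm from the associate space, and then Lemma \ref{lem-extrap}(i) for the lower bound and the H\"older inequality for the pair $(X^{1/p},(X^{1/p})')$ together with Lemma \ref{lem-extrap}(iii) for the upper bound. The only point to tighten is the citation in Step 2: the identity $\|h\|_{X^{1/p}}=\sup_{\|g\|_{(X^{1/p})'}\le1}\int_{\mathbb{R}^n}|h(x)g(x)|\,dx$ rests on the Lorentz--Luxemburg-type theorem that every ball Banach function space equals its second associate space with equal norms; this is true and standard in this literature, but it is not recorded in the present paper (Remark \ref{rm-bqbf}(iii) and Remark \ref{2143} give only the Fatou property and that $(X^{1/p})'$ is a ball Banach function space), so you should cite it precisely rather than gesture at \cite{shyy2017}. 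Your reduction to nonnegative $g$ is harmless since $R_{(X^{1/p})'}g=R_{(X^{1/p})'}|g|$ and the associate norm is unchanged under $g\mapsto|g|$.
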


\begin{proof}[Proof of Proposition \ref{point2}]
For any given $f\in C^\infty$
with $|\nabla f|\in C^\infty_{\rm c}$,
let the set $A$ be the same as in Lemma \ref{Poin}.
Repeating an argument used in the proof of
Proposition \ref{point} with $x$ and $y$
replaced, respectively, by $y$ and $x$, we obtain
\begin{align*}
E^{(2)}_{\lambda,
n(\beta-\frac1p)}[f]\setminus\left(\mathbb{R}^n
\times A\right)
\subset\bigcup_{j\in\mathbb{Z}_+}
\bigcup_{\alpha\in\{0,\frac13,\frac23\}^n}
\bigcup_{Q\in\mathcal{D}^{\alpha}_{\lambda(j),\beta+1-\frac1p}[f]}
\left(2^jQ\times Q\right)
\end{align*}
and hence, for any $x\in\mathbb{R}^n$,
\begin{align}\label{pointe22}
&\int_{\mathbb{R}^n}{\bf 1}_{E^{(2)}_{\lambda,
n(\beta-\frac1p)}}(x,y)|x-y|^{qn(\beta-\frac1p)-n}\,dy\notag\\
&\quad\lesssim\sum_{j\in\mathbb{Z}_+}
\sum_{\alpha\in\{0,\frac13,\frac23\}^n}
\sum_{Q\in\mathcal{D}^{\alpha}_{\lambda(j),\beta+1-\frac1p}[f]}
\int_{Q}\left|2^jQ\right|^{q(\beta-\frac1p)-1}\,dy{\bf 1}_{2^jQ}(x)\notag\\
&\quad=\sum_{j\in\mathbb{Z}_+}
\sum_{\alpha\in\{0,\frac13,\frac23\}^n}
\sum_{Q\in\mathcal{D}^{\alpha}_{\lambda(j),\beta+1-\frac1p}[f]}
\left|2^jQ\right|^{q(\beta-\frac1p)-1}|Q|{\bf 1}_{2^jQ}(x)
\end{align}
with the implicit positive constant depending only on $n$,
$\beta$, $p$, and $q$.

Next, we prove (i). Assume that $q\in [p,\infty)$.
From this, \eqref{pointe22}, and Lemma \ref{ApProperty}(ii), similar to
the estimate \eqref{eq2105}, we easily infer that
(i) holds.

Now, we show (ii). Assume that $q\in (0,p)$.
Applying this, \eqref{pointe22},
and Minkowski's inequality, we conclude that
\begin{align}\label{gamma<22e1-z}
&{\rm I}: =\left\{\int_{\mathbb{R}^n}
\left[\int_{\mathbb{R}^n}{\bf 1}_{E^{(2)}_
{\lambda,n(\beta-\frac1p)}}(x,y)|x-y|^{qn(\beta-\frac1p)-n}\,dy\right]
^{\frac pq}\upsilon(x)\,dx \right\}^{\frac qp}\notag\\
&\quad\lesssim\sum_{j\in\mathbb{Z}_+}\sum_{\alpha\in
\{0,\frac13,\frac23\}^n}\left\{\int_{\mathbb{R}^n}
\left[\sum_{Q\in\mathcal{D}^{\alpha}
_{\lambda(j),\beta+1-\frac1p}[f]}
\left|2^jQ\right|^{q(\beta-\frac1p)-1}|Q|{\bf 1}_{2^jQ}(x)
\right]^{\frac pq}\upsilon(x)\,dx\right\}^{\frac qp}.
\end{align}
For any given $j\in\mathbb{Z}_+$ and $\alpha\in
\{0,\frac13,\frac23\}^n$ and for any $Q\in\mathcal{D}^{\alpha}
_{\lambda(j),\beta+1-\frac1p}[f]$, let $J_Q: = |2^jQ|^{q(\beta-\frac1p)-1}|Q|$.
Moreover, let $r: = \frac{p}{q}\in (1,\infty)$ and $\mu: =
\upsilon^{1-r'}$. Choose $g\in L^{r'}_\mu$
such that $\|g\|_{L^{r'}_\mu} = 1$.
Using this, Tonelli's theorem,
Lemma \ref{lem-extrap} with
$X:=L^{r'}_{\mu}$,
(ii), (v), and
(iv) of Lemma \ref{ApProperty}, and H\"{o}lder's
inequality, we find that
\begin{align*}
& \left|\int_{\mathbb{R}^n}
\sum_{Q\in\mathcal{D}^{\alpha}
_{\lambda(j),\beta+1-\frac1p}[f]}J_Q {\bf 1}_{2^jQ}(x) g(x)\,dx\right|\\
& \quad\le \sum_{Q\in\mathcal{D}^{\alpha}
_{\lambda(j),\beta+1-\frac1p}[f]}J_Q
R_{L^{r'}_\mu}g\left(2^jQ\right) \le 2^{jn}
\left[R_{L^{r'}_\mu}g\right]_{A_1}
\sum_{Q\in\mathcal{D}^{\alpha}
_{\lambda(j),\beta+1-\frac1p}[f]}J_Q
R_{L^{r'}_\mu}g(Q)\\
&\quad \lesssim 2^{jn}\|M\|_{L^{r'}_\mu
\to L^{r'}_\mu}\int_{\mathbb{R}^n}
\sum_{Q\in\mathcal{D}^{\alpha}
_{\lambda(j),\beta+1-\frac1p}[f]}J_Q {\bf 1}_{Q}(x)
R_{L^{r'}_\mu}g(x)\,dx\\
&\quad \le 2^{jn}[\mu]_{A_{r'}}^{\frac{1}{r'-1}}
\left\{\int_{\mathbb{R}^n}\left[
\sum_{Q\in\mathcal{D}^{\alpha}
_{\lambda(j),\beta+1-\frac1p}[f]}J_Q {\bf 1}_{Q}(x)
\right]^{r}\upsilon(x)\,dx\right\}^{\frac{1}{r}}
\left\|R_{L^{r'}_\mu}g
\right\|_{L^{r'}_\mu}\\
&\quad \lesssim 2^{jqn(\beta-\frac1p)}[\upsilon]_{A_1}
\left\{\int_{\mathbb{R}^n}\left[
\sum_{Q\in\mathcal{D}^{\alpha}
_{\lambda(j),\beta+1-\frac1p}[f]}|Q|^{q(\beta-\frac1p)}{\bf 1}_{Q}(x)
\right]^{\frac{p}{q}}\upsilon(x)\,dx\right\}^{\frac qp}.
\end{align*}
From this, Lemma \ref{Lpv}(iii), and \eqref{gamma<22e1-z},
it follows that
\begin{align*}
{\rm I} \lesssim [\upsilon]_{A_1}
\sum_{j\in\mathbb{Z}_+}2^{jqn(\beta-\frac1p)}\sum_{\alpha\in
\{0,\frac13,\frac23\}^n}\left\{\int_{\mathbb{R}^n}
\left[\sum_{Q\in\mathcal{D}^{\alpha}
_{\lambda(j),\beta+1-\frac1p}[f]}
|Q|^{q(\beta-\frac1p)}{\bf 1}_{Q}(x)\right]^{\frac pq}\upsilon(x)\,dx\right\}^{\frac qp},
\end{align*}
which, together with Lemma \ref{lem-sd} with $r: = q$,
further implies that the desired conclusion holds.
This then finishes the proof of
Proposition \ref{point2}.
\end{proof}

Based on these, we now prove Proposition \ref{upper}.

\begin{proof}[Proof of Proposition \ref{upper}]
We first show that there exists an increasing
continuous function $\psi$ on $[0,\infty)$
such that, for any $\upsilon\in A_1$
and $f\in C^\infty$ with $|\nabla f|
\in C_{\rm c}$,
\begin{align}\label{uppere2}
&\sup_{\lambda\in(0,\infty)}\lambda^p
\int_{\mathbb{R}^n}
\left[\int_{\mathbb{R}^n}{\bf 1}_{E_{\lambda,\frac{\gamma}{q}}[f]}
(x,y)|x-y|^{\gamma-n}\,dy\right]^{\frac pq}\upsilon(x)\,dx
\lesssim\psi\left([\upsilon]_{A_1}\right)
\int_{\mathbb{R}^n}\left|\nabla f(x)\right|^p
\upsilon(x)\,dx.
\end{align}
We only consider the case $q\in[p,\infty)$ because
the case $q\in(0,p)$ is similar and hence we omit the details.

Fix $\lambda\in(0,\infty)$ and
$f\in C^\infty$ with
$|\nabla f|\in C_{\rm c}$.
Applying \eqref{Elambda}, \eqref{df-e1}, and \eqref{df-e2},
we find that
\begin{align}\label{gamma<0e1}
&\int_{\mathbb{R}^n}
\left[\int_{\mathbb{R}^n}{\bf 1}_{E_{\lambda,
\frac{\gamma}{q}}[f]}(x,y)|x-y|^{\gamma-n}\,dy\right]
^{\frac pq}\upsilon(x)\,dx\notag\\
&\quad\lesssim\int_{\mathbb{R}^n}
\left[\int_{\mathbb{R}^n}{\bf 1}_{E^{(1)}_{\frac\lambda2,
\frac{\gamma}{q}}[f]}(x,y)|x-y|^{\gamma-n}\,dy\right]
^{\frac pq}\upsilon(x)\,dx\notag\\
&\qquad+\int_{\mathbb{R}^n}\left[\int_{\mathbb{R}^n}{\bf 1}_{E^{(2)}_{
\frac\lambda2,\frac{\gamma}{q}}
[f]}(x,y)|x-y|^{\gamma-n}\,dy\right]
^{\frac pq}\upsilon(x)\,dx\notag\\
&\quad=:{\rm I}_1+{\rm I}_2.
\end{align}

We first deal with ${\rm I}_1$.
To this end, choose $\varepsilon\in(0,1-n(\frac1p-\frac1q))$.
From both Proposition \ref{point}(i)
and Theorem \ref{thm-cddd}(i) with $\beta:=
\frac{1}{p}+\frac{\gamma}{qn}$,
the definition of $\lambda(j)$,
and the assumption $\varepsilon\in(0,1)$,
we deduce that
\begin{align}\label{gamma<0e4}
{\rm I_1}
&\lesssim\sum_{j\in\mathbb{Z}_+}2^{\frac{j\gamma p}{q}}
\sum_{\alpha\in\{0,\frac13,\frac23\}^n}
\sum_{Q\in\mathcal{D}^{\alpha}_{\frac{\lambda(j)}{2},
1+\frac{\gamma}{qn}}[f]}
|Q|^{\frac{\gamma p}{qn}}\upsilon(Q)\notag\\
&\lesssim\frac{[\upsilon]_{A_1}}{\lambda^p}
\sum_{j\in\mathbb{Z}_+}2^{\frac{j\gamma p}{q}}
2^{jp(\varepsilon-1-\frac{\gamma}{q})}
\int_{\mathbb{R}^n}\left|\nabla f(x)\right|^p
\upsilon(x)\,dx\notag\\
&\le\frac{[\upsilon]_{A_1}}{\lambda^p}
\sum_{j\in\mathbb{Z}_+}2^{jp(\varepsilon-1)}
\int_{\mathbb{R}^n}\left|\nabla f(x)\right|^p
\upsilon(x)\,dx
\sim\frac{[\upsilon]_{A_1}}{\lambda^p}
\int_{\mathbb{R}^n}\left|\nabla f(x)\right|^p
\upsilon(x)\,dx.
\end{align}
This then finishes the estimation of ${\rm I}_1$.

Next, we estimate ${\rm I}_2$.
By Propositions \ref{point2}(i) and
Theorem \ref{thm-cddd}(i) with $\beta:=\frac1p+\frac{\gamma}{qn}$,
the definition of $\lambda(j)$,
and the assumption $\varepsilon\in(0,
1-n(\frac1p-\frac1q))$,
we further conclude that
\begin{align*}
{\rm I}_2
&\le[\upsilon]_{A_1}
\sum_{j\in\mathbb{Z}_+}2^{jnp(
\frac1p+\frac{\gamma}{qn}-\frac{1}{q})}
\sum_{\alpha\in\{0,\frac13,\frac23\}^n}
\sum_{Q\in\mathcal{D}^{\alpha}_
{\frac{\lambda(j)}{2},1+\frac{\gamma}{qn}}[f]}
|Q|^{\frac{\gamma p}{qn}}\upsilon(Q)\notag\\
&\lesssim\frac{[\upsilon]_{A_1}^2}{\lambda^p}
\sum_{j\in\mathbb{Z}_+}2^{jnp(\frac1p+
\frac{\gamma}{qn}-\frac{1}{q})}
2^{jp(\varepsilon-1-\frac{\gamma}{q})}
\int_{\mathbb{R}^n}\left|\nabla f(x)\right|^p
\upsilon(x)\,dx\notag\\
&\le\frac{[\upsilon]_{A_1}^2}
{\lambda^p}\sum_{j\in\mathbb{Z}_+}
2^{jp[\varepsilon-1+n(\frac1p-\frac1q)]}
\int_{\mathbb{R}^n}\left|\nabla f(x)\right|^p
\upsilon(x)\,dx
\sim\frac{[\upsilon]_{A_1}^2}
{\lambda^p}\int_{\mathbb{R}^n}\left|\nabla f(x)\right|^p
\upsilon(x)\,dx,
\end{align*}
which completes the estimation of ${\rm I}_2$.
Combining this estimate, \eqref{gamma<0e1},
and \eqref{gamma<0e4},
we find that \eqref{uppere2} holds
with $\psi(t):=(1+t)t$ for any $t\in[0,\infty)$.

Now, we prove that \eqref{uppere1} holds
for any $f\in C^\infty$
with $|\nabla f|\in C_{\rm c}$.
Indeed, from the assumption that $M$ is bounded on $Y:=(X^{\frac1p})'$,
Lemma \ref{4.6},
\eqref{uppere2}, the fact that $\psi$
in \eqref{uppere2} is increasing, and Lemma \ref{lem-extrap}(ii),
it follows that, for any $\lambda\in(0,\infty)$ and
$f\in C^\infty$
with $|\nabla f|\in C_{\rm c}$,
\begin{align*}
&\lambda^p
\left\|\left[\int_{\mathbb{R}^n}
{\bf 1}_{E_{\lambda,\frac{\gamma}{q}}[f]}
(\cdot,y)|\cdot-y|^{\gamma-n}\,dy
\right]^{\frac1q}\right\|_{X}^p\\
&\quad\sim\sup_{\{g\in Y:\|g\|_{Y}\le 1\}}
\lambda^p\int_{\mathbb{R}^n}
\left[\int_{\mathbb{R}^n}
{\bf 1}_{E_{\lambda,\frac\gamma q}[f]}
(x,y)|x-y|^{\gamma-n}\right]^{\frac pq}
R_{Y}g(x)\,dx\notag\\
&\quad\lesssim
\sup_{\{g\in Y:\|g\|_{Y}\le 1\}}
\psi\left(\left[R_{Y}g\right]
_{A_1}\right)
\int_{\mathbb{R}^n}\left|\nabla f(x)\right|^p
R_{Y}g(x)\,dx\\
&\quad\le\psi\left(2\left\|M\right\|_{
Y\to Y}\right)
\sup_{\{g\in Y:\|g\|_{Y}\le 1\}}
\int_{\mathbb{R}^n}\left|\nabla f(x)\right|^p
R_{Y}g(x)\,dx\sim\psi\left(2\left\|M\right\|_{
Y\to Y}\right)
\left\|\,|\nabla f|\,\right\|_{X}^p.
\end{align*}
Thus, \eqref{uppere1} holds
for any $f\in C^\infty$
with $|\nabla f|\in C_{\rm c}$.

Moreover, if $X$ has an absolutely continuous norm,
then, repeating the standard extension argument used in the proof of
\cite[Theorem 3.29]{zyy23-1} with
Theorems 3.1, 3.6, 3.8, and 3.14 and Corollary 3.26
therein replaced by \eqref{uppere1},
we further conclude that \eqref{uppere1} also
holds for any $f\in \dot{W}^{1,X}$.
This then finishes the proof of Proposition \ref{upper}.
\end{proof}

Next, we show the lower estimate of Theorem \ref{xgamma<0} as follows.

\begin{proposition}\label{lower}
If $X$ is a ball Banach function space,
$q\in(0,\infty)$, $\gamma\in\mathbb{R}\setminus\{0\}$,
and $f\in \dot{W}^{1,X}$, then
\begin{align*}
\liminf_{\lambda\to L}
\left\|\left[\int_{\mathbb{R}^n}
{\bf 1}_{E_{\lambda,\frac\gamma q}[f]}
(\cdot,y)|\cdot-y|^{\gamma-n}\,dy\right]
^{\frac1q}\right\|_{X}
\ge\left[\frac{2\Gamma(\frac{q+1}{2})\pi^{\frac{n-1}{2}}}
{|\gamma|\Gamma(\frac{q+n}{2})}\right]^{\frac1q}
\left\|\,|\nabla f|\,\right\|_X,
\end{align*}
where $\lambda\to L$ means $\lambda\to\infty$
when $\gamma\in(0,\infty)$ and
$\lambda\to 0^+$ when $\gamma\in(-\infty,0)$.
\end{proposition}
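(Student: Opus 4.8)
The plan is to establish a pointwise-in-$\lambda$ lower bound for the inner integral at Lebesgue points of $\nabla f$, and then pass to the limit using the monotone/Fatou properties built into the ball Banach function space structure. The heuristic is classical: near a point $x$ where $f$ is differentiable with gradient $\nabla f(x)\neq 0$, the difference quotient $|f(x)-f(y)|/|x-y|^{1+\gamma/q}$ behaves like $|\nabla f(x)\cdot(y-x)/|y-x||\,\cdot\,|x-y|^{-\gamma/q}$, so for $y$ in a thin cone around the direction of $\nabla f(x)$ and at the appropriate distance scale the pair $(x,y)$ lands in $E_{\lambda,\gamma/q}[f]$. Integrating $|x-y|^{\gamma-n}$ over the set of such $y$ and sending $\lambda\to L$ (which forces the relevant annulus of radii to shrink to $0$ when $\gamma<0$ and to expand to $\infty$ when $\gamma>0$, in both cases localizing the estimate so that the linearization is exact in the limit) produces exactly the constant $\kappa(q,n)/|\gamma|$ times $|\nabla f(x)|^q$. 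This is the same computation that underlies the lower bound in the classical BSVY formula, so I would cite or mimic \cite{bsvy.arxiv} (and \cite{zyy23-1}) for the scalar pointwise limit.

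\textbf{Key steps, in order.} First, by Remark \ref{loc}, $f\in \dot W^{1,X}(\mathbb{R}^n)\subset W^{1,1}_{\rm loc}(\mathbb{R}^n)$, so $f$ is approximately differentiable almost everywhere and almost every $x$ is a Lebesgue point of $\nabla f$. Fix such an $x$. Second, for fixed $\lambda$ consider the set $\{y:\ (x,y)\in E_{\lambda,\gamma/q}[f]\}$ and show, via the approximate differentiability of $f$ at $x$ and a change to polar coordinates centered at $x$, that
\begin{align*}
\liminf_{\lambda\to L}\int_{\mathbb{R}^n}{\bf 1}_{E_{\lambda,\frac\gamma q}[f]}(x,y)|x-y|^{\gamma-n}\,dy
\ge\frac{\kappa(q,n)}{|\gamma|}\left|\nabla f(x)\right|^q
\end{align*}
for almost every $x$; here the factor $\kappa(q,n)=2\Gamma(\frac{q+1}{2})\pi^{(n-1)/2}/\Gamma(\frac{q+n}{2})$ arises as $\int_{\mathbb{S}^{n-1}}|e\cdot\theta|^q\,d\sigma(\theta)$ for a unit vector $e$, and the $1/|\gamma|$ comes from the radial integral $\int |x-y|^{\gamma-1}\,d|x-y|$ over the limiting half-line $(0,\infty)$ or $(0,\infty)$ localized appropriately by the $\lambda\to L$ limit. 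Third, take $q$-th roots and then apply $\|\cdot\|_X$: writing $g_\lambda(x):=[\int_{\mathbb{R}^n}{\bf 1}_{E_{\lambda,\gamma/q}[f]}(x,y)|x-y|^{\gamma-n}\,dy]^{1/q}$, the pointwise $\liminf$ bound gives $\liminf_{\lambda\to L} g_\lambda \ge [\kappa(q,n)/|\gamma|]^{1/q}|\nabla f|$ a.e.; since $|\nabla f|\in X$, the Fatou property of ball quasi-Banach function spaces (Remark \ref{rm-bqbf}(iii)), applied along a sequence $\lambda_k\to L$ realizing the $\liminf$ of $\|g_{\lambda}\|_X$, yields
\begin{align*}
\liminf_{\lambda\to L}\|g_\lambda\|_X
\ge\left\|\liminf_{k\to\infty} g_{\lambda_k}\right\|_X
\ge\left\|\liminf_{\lambda\to L} g_\lambda\right\|_X
\ge\left[\frac{\kappa(q,n)}{|\gamma|}\right]^{\frac1q}\left\|\,|\nabla f|\,\right\|_X,
\end{align*}
using the monotonicity axiom Definition \ref{1659}(ii) for the middle step.

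\textbf{Main obstacle.} The delicate point is the pointwise liminf estimate in the second step: one must show that the contribution of the \emph{error term} in the approximate differentiability expansion $f(y)=f(x)+\nabla f(x)\cdot(y-x)+o(|y-x|)$ does not destroy the lower bound. The standard remedy is to exploit that, as $\lambda\to L$, only $y$ with $|x-y|$ in a shrinking (when $\gamma<0$) or growing-but-rescaled (when $\gamma>0$) range contributes, so the $o(|y-x|)$ term is uniformly negligible compared to $\lambda|x-y|^{1+\gamma/q}$ on that range; this is exactly where the hypothesis $\gamma\neq 0$ is used, since $\gamma=0$ would not produce a degenerating scale. Care is also needed to handle the set of non-differentiability points and points where $\nabla f(x)=0$ (the bound is trivially true there), and to make the application of the Fatou property rigorous by first extracting a minimizing sequence $\lambda_k$ for $\|g_\lambda\|_X$. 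All the remaining manipulations—polar coordinates, the identification of $\kappa(q,n)$, and the radial integral giving $1/|\gamma|$—are routine and can be quoted from \cite{bsvy.arxiv} or \cite{zyy23-1}.
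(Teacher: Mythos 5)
Your overall route is the same as the paper's: a pointwise lower bound at almost every point coming from differentiability of $f$, followed by the Fatou property of $X$ (Remark \ref{rm-bqbf}(iii)) applied along a sequence $\lambda_k\to L$ realizing the liminf, together with the monotonicity axiom of Definition \ref{1659}(ii). Where you differ is in the differentiability input, and this is precisely where the paper's actual work lies. The paper first proves \eqref{lowere1}, i.e.\ that for almost every $(x,h)$ one has $\lim_{\delta\to0^+}\delta^{-1}[f(x+\delta h)-f(x)]=\nabla f(x)\cdot h$; since $\dot W^{1,X}(\mathbb{R}^n)$ only embeds into $W^{1,1}_{\rm loc}(\mathbb{R}^n)$, this is obtained by multiplying by cutoffs $\rho_k$ so that $f\rho_k\in\dot W^{1,1}(\mathbb{R}^n)$ and the lemma of \cite{bsvy.arxiv} on ray-wise difference quotients applies; afterwards the pointwise-limit argument of \cite{bsvy.arxiv} is repeated with the Fatou lemma on $L^1(\mathbb{R}^n)$ replaced by Remark \ref{rm-bqbf}(iii). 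You instead invoke approximate differentiability and Lebesgue points of $\nabla f$ and propose to rerun the polar-coordinate computation, quoting \cite{bsvy.arxiv} for the scalar limit. This can be made to work, but it is not simply quotable: the pointwise lemmas there are proved for globally $\dot W^{1,1}$ functions, so either you perform the same cutoff localization the paper does, or you must genuinely control the weighted contribution of the density-zero exceptional set in the approximate-differentiability expansion over the annuli of radii selected by $\lambda$ (the step you yourself flag as the ``main obstacle'' but do not carry out). Either way the structure of the proof is the paper's.

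One further point: the normalizing factor $\lambda$ in front of the norm is missing both in the stated proposition (evidently a typo; compare \eqref{xgamma<0e1} and \eqref{xgamma<0e2}, for whose lower bounds Proposition \ref{lower} is used) and, more importantly, in your pointwise claim. Without the factor $\lambda^q$, the inequality $\liminf_{\lambda\to L}\int_{\mathbb{R}^n}{\bf 1}_{E_{\lambda,\gamma/q}[f]}(x,y)|x-y|^{\gamma-n}\,dy\ge \kappa(q,n)|\gamma|^{-1}|\nabla f(x)|^q$ is false when $\gamma\in(0,\infty)$: at a point of differentiability the inner integral behaves like $\lambda^{-q}\kappa(q,n)|\gamma|^{-1}|\nabla f(x)|^q$ and tends to $0$ as $\lambda\to\infty$. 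With $\lambda^q$ (respectively $\lambda$) reinstated throughout, your scheme is the intended one and coincides with the paper's.
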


\begin{proof}
By $f\in \dot{W}^{1,X}$
and Remark \ref{loc}, we obtain
$f\in W^{1,1}_{\loc}$.
We next show that, for almost every
$(x,h)\in\mathbb{R}^n\times\mathbb{R}^n$,
\begin{align}\label{lowere1}
\lim_{\delta\to0^+}\frac{f(x+\delta h)-f(x)}{\delta}
=\nabla f(x)\cdot h.
\end{align}
To this end, for any $k\in\mathbb{Z}^n$,
choose $\rho_k\in C^\infty_{\rm c}$
such that $\rho_k\equiv 1$ on $B(k,2)$.
Then, for any $k\in\mathbb{Z}^n$,
$f\rho_k\in \dot{W}^{1,1}$.
Combining this and \cite[Lemma 3.1]{bsvy.arxiv},
we further conclude that, for any given $k\in\mathbb{Z}^n$
and almost every
$(x,h)\in\mathbb{R}^n\times\mathbb{R}^n$,
\begin{align}\label{lowere2}
\lim_{\delta\to0^+}
\frac{(f\rho_k)(x+\delta h)-(f\rho_k)(x)}{\delta}
=\nabla\left(f\rho_k\right)(x)\cdot h.
\end{align}
For any $k\in\mathbb{Z}^n$, let
$A_k:=\{(x,h)\in\mathbb{R}^n\times\mathbb{R}^n:
\eqref{lowere2}\ \text{does not hold}\}$
and $A:=\bigcup_{k\in\mathbb{Z}^n}A_k$.
Then $|A|=0$. From the assumption that,
for any $k\in\mathbb{Z}^n$,
$\rho_k\in C^\infty_{\rm c}$
is equal to 1 in $B(k,2)$, \eqref{lowere2},
and the definition of
weak derivatives, we deduce that,
for any $k\in\mathbb{Z}^n$ and $(x,h)\in
[B(k,1)\times\mathbb{R}^n]\setminus A$,
\begin{align*}
\lim_{\delta\to0^+}\frac{f(x+\delta h)-f(x)}{\delta}
&=\lim_{\delta\to0^+}
\frac{(f\rho_k)(x+\delta h)-(f\rho_k)(x)}{\delta}\\
&=\nabla\left(f\rho_k\right)(x)\cdot h=
\left[\nabla f(x)\rho_k(x)
+f(x)\nabla\rho_k(x)\right]\cdot h=\nabla f(x)\cdot h,
\end{align*}
which completes the proof of \eqref{lowere1}.
Therefore,
repeating an argument similar to that used in
the proof of \cite[Lemma 3.2]{bsvy.arxiv}
with Lemma 3.1 and Fatou's lemma on $L^1$
replaced, respectively, by \eqref{lowere1}
and Remark \ref{rm-bqbf}(iii), we further
obtain the desired conclusion of the present proposition.
This then finishes the proof of Proposition \ref{lower}.
\end{proof}

We now prove Theorem \ref{xgamma<0}.

\begin{proof}[Proof of Theorem \ref{xgamma<0}]
We first assume (a) holds. Then,
combining Propositions \ref{upper} and
\ref{lower} and \cite[Theorem 3.6]{zyy23-1},
we conclude that \eqref{xgamma<0e1} holds.
Using this and repeating an argument
similar to that used in the proof of
\cite[Theorem 3.35]{zyy23-1},
we find that \eqref{xgamma<0e1} also holds when (b) holds.
This then finishes the proof of Theorem \ref{xgamma<0}.
\end{proof}

\section{Proof of Theorem \ref{cddd}
(New Characterizations of $A_p$-Weights)}\label{sec-6}

The main target of this section is to
prove Theorem \ref{cddd}.
To this end, let us first recall some related concepts.
Notice that the definitions of the (inverse)
Fourier transform are naturally extended
from $\mathcal{S}$ to
$\mathcal{S}'/\mathcal{P}$ (that is, the set of
all tempered distributions modulo polynomials).
Then, for any $\beta\in\mathbb{R}$ and
$f\in\mathcal{S}'/\mathcal{P}$,
the \emph{Riesz potential} $I_\beta f$ of $f$
is defined by setting
\begin{align}\label{df-ibeta}
I_{\beta} f:=\mathcal{F}^{-1}
\left[(2\pi|\cdot|)^{-\beta}\mathcal{F}f\right];
\end{align}
see \cite{ysy10} or
\cite[Chapter 1]{g2014-2} for more details.
Let $p\in [1,\infty)$ and $\upsilon\in L^1_{\loc}$.
The \emph{homogeneous weighted
Sobolev space} $\dot{H}^{1,p}_\upsilon$
is defined to be the set of all $f\in\mathcal{S}'/\mathcal{P}$ such that
$\|f\|_{\dot{H}^{1,p}_\upsilon}
:=\left\|I_{-1}f\right\|_{L^p_\upsilon}$ is finite.
Moreover, let
\begin{align}\label{df-Ynp}
\dot{Y}^{1,p}_{\upsilon}:=
\begin{cases}
\dot{W}^{1,p}_\upsilon&
\text{if}\ n=1\ \text{or}\ p=1,\\
\dot{H}^{1,p}_\upsilon&
\text{if}\ n\in\mathbb{N}\cap[2,\infty)\
\text{and}\ p\in(1,\infty).
\end{cases}
\end{align}

To show Theorem \ref{cddd}, we first
prove two technical propositions as follows.

\begin{proposition}\label{cddd-nece}
Let $p\in[1,\infty)$, $\beta\in\mathbb{R}
\setminus\{\frac1p\}$,
and $\upsilon\in L^1_{\loc}$ be nonnegative.
If there exists a positive constant
$C$ such that, for any $\alpha\in\{0,\frac13,\frac23\}^n$ and
$f\in \dot{Y}^{1,p}_{\upsilon}\cap L^1_{\loc}$,
\begin{align}\label{cddd-necee0}
\sup_{\lambda\in(0,\infty)}\lambda^p
\sum_{Q\in\mathcal{D}^\alpha_{\lambda,\beta+1-\frac1p}[f]}
|Q|^{\beta p-1}\upsilon(Q)\le
C\|f\|_{\dot{Y}^{1,p}_{\upsilon}}^p,
\end{align}
then $\upsilon\in A_p$.
\end{proposition}

\begin{proof}
Using Proposition \ref{point}(i)
with $q:=p$, we conclude that,
for any $\lambda\in(0,\infty)$, $\varepsilon\in (0,1),$ and
$f\in L^1_{\loc}$,
\begin{align*}
&\int_{\mathbb{R}^n}
\int_{\mathbb{R}^n}
{\bf 1}_{E^{(1)}_{\lambda,n(\beta-\frac1p)}[f]}
(x,y)|x-y|^{pn(\beta-\frac1p)-n}\upsilon(x)\,dx\,dy\\
&\quad\lesssim
\sum_{j\in\mathbb{Z}_+}2^{jn(\beta p-1)}\sum_{\alpha\in\{0,\frac13,
\frac23\}^n}\sum_{Q\in\mathcal{D}^{\alpha}_{\lambda(j),\beta+1-\frac1p}[f]}
|Q|^{\beta p-1}\upsilon(Q),
\end{align*}
where, for any $j\in\mathbb{Z}_+$,
$\lambda(j)\sim2^{j[1+n(\beta-\frac1p)-\varepsilon]}\lambda$.
This, combined with \eqref{cddd-necee0},
further implies that, for any $\lambda\in(0,\infty)$, $\varepsilon\in (0,1),$
and $f\in\dot{Y}^{1,p}_{\upsilon}
\cap L^1_{\loc}$,
\begin{align}\label{cddd-necee2}
&\lambda^p\int_{\mathbb{R}^n}\int_{\mathbb{R}^n}
{\bf 1}_{E_{\lambda,n(\beta-\frac1p)}^{(1)}[f]}(x,y)|x-y|^{pn(\beta-
\frac1p)-n}
\upsilon(x)\,dx\,dy\lesssim
\sum_{j\in\mathbb{Z}_+}2^{jp(\varepsilon-1)}
\left\|f\right\|_{\dot{Y}^{1,p}_{\upsilon}}^p
\sim\left\|f\right\|_{\dot{Y}^{1,p}_{\upsilon}}^p.
\end{align}

In what follows, let
$F_0:=B({\bf 0},6)\setminus B({\bf 0},3)$,
$F_1:=B({\bf 0},2)\setminus B({\bf 0},1)$,
$F_2:=B({\bf 0},18)\setminus B({\bf 0},9)$,
and $F_3:= B({\bf 0},48)\setminus B({\bf 0},24)$.
We now complete the present proof by considering
the following three cases for both $n$ and $p$.

\emph{Case 1: $n=1$ and $p\in(1,\infty)$.}
In this case, from \eqref{df-Ynp}, we infer that
$\dot{Y}^{1,p}_{\upsilon}(\mathbb{R})
=\dot{W}^{1,p}_{\upsilon}(\mathbb{R})$.
Applying this and \eqref{cddd-necee2} and
repeating an argument similar to that used in the proof of
\cite[Theorem 3.7]{dlyyz.arxiv}
with $E_{f}(\lambda,p)$ therein replaced by
$E^{(1)}_{\lambda,n(\beta-\frac1p)}[f]$, we
conclude that $\upsilon\in A_p(\mathbb{R})$ and hence
finish the proof of the present proposition
in this case.

\emph{Case 2: $n\in\mathbb{N}$ and $p=1$.} In this case,
$\dot{Y}^{1,1}_{\upsilon}
=\dot{W}^{1,1}_{\upsilon}$.
To prove $\upsilon\in A_1$,
we only need to show that,
for any nonnegative and radial $g\in L^1_{\loc}$,
\begin{align}\label{cddd-necee3}
\fint_{F_0}g(w)\,d w
\lesssim\frac{1}{\upsilon(F_0)}\int_{F_0}
g(w)\upsilon(w)\,d w.
\end{align}
Indeed, assume that \eqref{cddd-necee3} holds for the moment.
Let $a:=\mathop\mathrm{\,ess\,inf\,}_{x\in F_0}\upsilon(x)$.
We next claim that, for any $\varepsilon\in(0,\infty)$,
there exist $3\le c_\varepsilon<b_\varepsilon\le 6$
such that $\upsilon{\bf 1}_{B({\bf0},b_\varepsilon)
\setminus B({\bf0},c_\varepsilon)}<a+\varepsilon$.
Otherwise, there exists $\varepsilon_0\in(0,\infty)$
such that, for any $3\le c<b\le6$,
$\upsilon{\bf 1}_{B({\bf 0},b)\setminus B({\bf 0},c)}
\ge a+\varepsilon_0$.
This further implies that
$$\mathop\mathrm{\,ess\,inf\,}_{x\in F_0}\upsilon(x)
=\inf_{3\le c<b\le6}\mathop\mathrm{\,ess\,inf\,}_{x\in
B({\bf 0},b)\setminus B({\bf 0},c)}\upsilon(x)
\ge a+\varepsilon_0>a,$$
which contradicts the definition of $a$.
Thus, the above claim holds.
For any $\varepsilon\in(0,\infty)$,
let $g_{\varepsilon}:={\bf 1}_{B({\bf 0},b_\varepsilon)
\setminus B({\bf 0},c_\varepsilon)}$.
Then, by \eqref{cddd-necee3}, we obtain,
for any $\varepsilon\in(0,\infty)$,
\begin{align*}
\frac{|B({\bf 0},b_\varepsilon)
\setminus B({\bf 0},c_\varepsilon)|}{|F_0|}
\lesssim\frac{1}{\upsilon(F_0)}
\int_{B({\bf 0},b_\varepsilon)
\setminus B({\bf 0},c_\varepsilon)}\upsilon(z)\,dz
<\frac{a+\varepsilon}{\upsilon(F_0)}\left|B({\bf 0},b_\varepsilon)
\setminus B({\bf 0},c_\varepsilon)\right|,
\end{align*}
which further implies
$\frac{\upsilon(F_0)}{|F_0|}\lesssim a+\varepsilon$.
From this and the arbitrariness of $\varepsilon$,
it follows that
\begin{align}\label{cddd-necee10}
\frac{\upsilon(F_0)}{|F_0|}\lesssim a
=\mathop\mathrm{\,ess\,inf\,}_{x\in F_0}\upsilon(x).
\end{align}
Choose $Q_0\in\mathcal{Q}$ such that
$Q_0\subset F_0$ and $|Q_0|\sim|F_0|$.
Using \eqref{cddd-necee10}, we conclude that
\begin{align*}
\frac{\upsilon(Q_0)}{|Q_0|}
\lesssim\frac{\upsilon(F_0)}{|F_0|}
\lesssim\mathop\mathrm{\,ess\,inf\,}_{x\in F_0}\upsilon(x)
\le \mathop\mathrm{\,ess\,inf\,}_{x\in Q_0}\upsilon(x).
\end{align*}
This, together with the fact that \eqref{cddd-necee2}
has both the dilation invariance and the translation invariance
[that is, for any $\delta\in(0,\infty)$ and
$z\in\mathbb{R}^n$, both $\upsilon(\delta\cdot)$ and
$\upsilon(\cdot-z)$ satisfy \eqref{cddd-necee2} with
the same implicit positive constant], further implies that,
for any cube $Q\in\mathcal{Q}$,
$\frac{\upsilon(Q)}{|Q|}\lesssim
\mathop\mathrm{\,ess\,inf\,}_{x\in Q}\upsilon(x)$
and hence $\upsilon\in A_1$.

Now, we prove \eqref{cddd-necee3}.
To this end, we first assume that $g\in C^\infty$
is nonnegative and radial
and choose $\eta\in C^\infty$
such that $\eta$ is radial and
\begin{align}\label{cddd-necee17}
{\bf 1}_{F_0}\le \eta\le {\bf 1}_{B({\bf 0},8)\setminus
B({\bf 0},2)}.
\end{align}
Then there exist two infinitely differentiable
functions
$g_0,\eta_0$ on $[0,\infty)$
such that, for any $x\in\mathbb{R}^n$,
$g(x)=g_0(|x|)$ and $\eta(x)=\eta_0(|x|)$.
For any $x\in\mathbb{R}^n$, let
\begin{align}\label{cddd-necee12}
f(x):=\int_{B({\bf 0},|x|)}g(w)\eta(w)\,dw.
\end{align}
Applying the polar coordinate, we obtain,
for any $x\in\mathbb{R}^n$,
\begin{align*}
f(x)=\int_{0}^{|x|}\int_{\mathbb{S}^{n-1}}g_0(r)
\eta_0(r)r^{n-1}\,d\sigma(\xi)\,dr
=\omega_{n-1}\int_{0}^{|x|}g_0(r)\eta_0(r)r^{n-1}\,dr,
\end{align*}
which further implies that
\begin{align}\label{cddd-necee6}
\nabla f(x)=\omega_{n-1}g_0(|x|)\eta_0(|x|)|x|^{n-1}
\frac{x}{|x|}=\omega_{n-1}g(x)\eta(x)|x|^{n-2}x,
\end{align}
where $\omega_{n-1}: = 2\pi^{\frac{n}{2}}/\Gamma(\frac{n}{2})$
is the surface area of $\mathbb{S}^{n-1}$
and $\Gamma(\cdot)$ denotes the Gamma function.
Thus, $|\nabla f|\in C_{\rm c}$ and hence
$f\in \dot{W}^{1,1}_{\upsilon}
=\dot{Y}^{1,1}_{\upsilon}\cap L^1_{\loc}$.
For any $x\in A_1$, $y\in A_2$, and
$z\in B_{x,y}$,
\begin{align*}
|z|\ge|y|-|y-z|\ge|y|-\frac{1}{20}|x-y|>8,
\end{align*}
which, combined with \eqref{cddd-necee12} and
\eqref{cddd-necee17},
further implies that
\begin{align}\label{cddd-necee4}
f(z)-f(x)&>\int_{B({\bf 0},8)\setminus
B({\bf 0},2)}g(w)\eta(w)\,d w\ge\int_{F_0}g(w)\,d w
\sim\int_{F_0}g(w)\,d w |x-y|^{1+n(\beta-1)}
\end{align}
with the positive equivalence constants depending
only on $n$ and $\beta$.
On the other hand, notice that,
for any $x\in F_2$, $y\in F_1$, and $z\in B_{x,y}$,
$|z|\le|z-y|+|y|<
\frac{1}{20}|x-y|+|y|< 3,$
which, together with \eqref{cddd-necee12} and
\eqref{cddd-necee17} again, further implies that
\begin{align*}
f(x)-f(z)&>\int_{B({\bf 0},9)
\setminus B({\bf 0},3)}g(w)\eta(w)\,d w\ge\int_{F_0}g(w)\,d w
\sim\int_{F_0}g(w)\,d w |x-y|^{1+n(\beta-1)}
\end{align*}
with the positive equivalence constants depending only
on $n$ and $\beta$.
Using this and \eqref{cddd-necee4},
we find that there exists a positive
constant $C_{(n,\beta)}$, depending only on
$n$ and $\beta$, such that,
for any $(x,y)\in (F_1\times F_2)
\cup(F_2\times F_1)$,
\begin{align*}
\left|f(x)-f_{B_{x,y}}\right|
=\left|\fint_{B_{x,y}}[f(x)-f(w)]\,d w\right|
\ge C_{(n,\beta)}
\int_{F_0}g(w)\,d w|x-y|^{1+n(\beta-1)}.
\end{align*}
This further implies
$(F_1\times F_2)\cup
(F_2\times F_1)\subset
E^{(1)}_{\lambda_{(n,\beta)},n(\beta-1)}[f],$
where $\lambda_{(n,\beta)}:=C_{(n,\beta)}
\int_{F_0}g(w)\,d w$.
From this, \eqref{cddd-necee2}, the fact that
$\dot{Y}^{1,1}_{\upsilon}=\dot{W}^{1,1}_\upsilon$
in this case,
\eqref{cddd-necee6},
and \eqref{cddd-necee17}, it follows that
\begin{align}\label{cddd-necee5}
&\upsilon\left(F_1\cup F_2\right)\int_{F_0}g(w)\,d w\notag\\
&\quad\sim\lambda_{(n,\beta)}\int_{F_1\cup F_2}
\upsilon(w)\,d w\lesssim\lambda_{(n,\beta)}
\int_{\mathbb{R}^n}\int_{\mathbb{R}^n}
{\bf 1}_{E^{(1)}_{\lambda_\beta,n(\beta-1)}[f]}
(x,y)|x-y|^{(\beta-2)n}\upsilon(x)\,dx\,dy\notag\\
&\quad\lesssim\int_{\mathbb{R}^n}\left|\nabla f(w)\right|
\upsilon(w)\,dw
\lesssim\int_{B({\bf 0},8)\setminus B({\bf 0},2)}
g(w)\upsilon(w)\,dw.
\end{align}
Next, fix a nonnegative and radial
$g\in L^1_{\loc}$ and
choose a nonnegative and radial
$\varphi\in C^\infty_{\rm c}$
such that $\int_{\mathbb{R}^n}\varphi(x)\,dx=1$.
For any $\varepsilon\in(0,\infty)$ and $x\in \mathbb{R}^n$,
let $\varphi_\varepsilon(x):=\frac{1}{\varepsilon^{n}}
\varphi(\frac{x}{\varepsilon})$ and
$g_\varepsilon(x):=(g{\bf 1}_{F_0})\ast
\varphi_{\varepsilon}(x).$
Therefore, $\{g_\varepsilon\}_{\varepsilon\in(0,\infty)}$
in $C^\infty$
are all nonnegative and radial and hence \eqref{cddd-necee5}
holds with $g:=g_{\varepsilon}$ for any $\varepsilon\in(0,\infty)$.
By \cite[Corollary 2.9]{D2000}, we conclude that
$\lim_{\varepsilon\to0^+}g_{\varepsilon}=g$
almost everywhere in $F_0$. Moreover, applying
Young's inequality, we find that, for any $\varepsilon\in(0,\infty)$,
\begin{align*}
\left\|g_{\varepsilon}\right\|_{L^\infty}
\le\left\|\varphi_{\varepsilon}\right\|_{L^1}
\left\|g{\bf 1}_{F_0}\right\|_{L^\infty}
=\left\|g{\bf 1}_{F_0}\right\|_{L^\infty}.
\end{align*}
From this, the
Lebesgue dominated convergence theorem, and
\eqref{cddd-necee5} with $g:=g_\varepsilon$ for any $\varepsilon\in(0,\infty)$,
we deduce that
\begin{align}\label{cddd-necee8}
\upsilon(F_1\cup F_2)\fint_{F_0}g(w)\,dw
&\sim\upsilon(F_1\cup F_2)\int_{F_0}g(w)\,dw\lesssim
\int_{F_0}g(w)\upsilon(w)\,dw.
\end{align}
Applying this and the fact that \eqref{cddd-necee2}
has the dilation invariance again,
we conclude that, for any nonnegative and radial
$g\in L^1_{\loc}$,
$$\upsilon(F_0\cup[B({\bf 0},54)
\setminus B({\bf 0},27)])\fint_{F_2}g(w)\,d w
\lesssim\int_{F_2}g(w)\upsilon(w)\,dw.$$
Letting $g:={\bf 1}_{F_2}$, we obtain
$$\upsilon(F_0)
\le\upsilon(F_0\cup [B({\bf 0},54)
\setminus B({\bf 0},27)])
\lesssim\upsilon(F_2)\le\upsilon(F_1\cup F_2).$$
From this and \eqref{cddd-necee8},
we infer \eqref{cddd-necee3}.
This then finishes the proof of the present proposition
in this case.

\emph{Case 3: $n\in \mathbb{N}\cap[2,\infty)$ and $p\in(1,\infty)$.} In this case,
$\dot{Y}^{1,p}_{\upsilon}=\dot{H}^{1,p}_{\upsilon}$.
Let $g\in C^\infty$
be nonnegative and let
$\eta\in C^\infty$
satisfy \eqref{cddd-necee17}.
Let $f:=I_{1}(g\eta)$, where
$I_1$ is the same as in \eqref{df-ibeta} with
$\beta$ replaced by $1$.
Then, by \cite[pp.\,10--11]{g2014-2},
we find that $f\in L^1_{\loc}$
and, for any $x\in\mathbb{R}^n$,
\begin{align}\label{cddd-necee18}
f(x)=\frac{\Gamma(\frac{n-1}{2})}{2\pi^{\frac n2}
\Gamma(\frac12)}
\int_{\mathbb{R}^n}\frac{g(w)\eta(w)}{|x-w|^{n-1}}
\,d w,
\end{align}
where $\Gamma(\cdot)$ denotes the Gamma function.
In addition, applying the semigroup
property of Riesz potentials (see, for instance,
\cite[p.\,9]{g2014-2}) and \eqref{cddd-necee17}, we conclude that
\begin{align}\label{cddd-necee19}
\left\|f\right\|_{\dot{Y}^{1,p}_{\upsilon}}^p
&=\left\|I_{-1}I_1(g\eta)\right\|_{L^p_\upsilon}
=\int_{\mathbb{R}^n}\left|g(w)\eta(w)
\right|^p\upsilon(w)\,dw\lesssim\int_{B({\bf 0},8)\setminus
B({\bf 0},2)}|g(w)|^p\upsilon(w)\,d w.
\end{align}

Observe that, for any $x\in F_1$, $y\in F_3$, $z\in B_{x,y}$,
and $w\in B({\bf 0},8)\setminus
B({\bf 0},2)$,
$|x-w|\le|x|+|w|<10$
and $|z-w|\ge|y-w|-|y-z|\ge
|y|-|w|-\frac{1}{20}|x-y|
>13.$
Using these, \eqref{cddd-necee18}, and
\eqref{cddd-necee17},
we find that, for any
$x\in F_1$, $y\in F_3$, and $z\in B_{x,y}$,
\begin{align}\label{cddd-necee20}
f(x)-f(z)
&\sim\int_{B({\bf 0},8)\setminus B({\bf 0},2)}
g(w)\eta(w)\left(
\frac{1}{|x-w|^{n-1}}-\frac{1}{|z-w|^{n-1}}\right)
\,d w\notag\\
&\gtrsim\int_{B({\bf 0},8)\setminus B({\bf 0},2)}
g(w)\eta(w)\,d w
\ge \int_{F_0}g(w)\,d w
\sim\int_{F_0}g(w)\,d w |x-y|^{1+n(\beta-\frac1p)},
\end{align}
where the implicit positive constants
depend only on $n$, $p$, and $\beta$.
In addition, for any $x\in F_3$, $y\in F_1$, $z\in B_{x,y}$,
and $w\in B({\bf 0},8)\setminus
B({\bf 0},2)$, we have $|x-w|\ge|x|-|w|>16$
and
\begin{align*}
|z-w|\le |z-y|+|y-w|<\frac{1}{20}
|x-y|+|y|+|w|<13.
\end{align*}
Therefore, from \eqref{cddd-necee18} and
\eqref{cddd-necee17} again,
it follows that,
for any $x\in F_3$, $y\in F_1$, and $z\in B_{x,y}$,
\begin{align*}
f(z)-f(x)
&\sim\int_{B({\bf 0},8)\setminus B({\bf 0},2)}
g(w)\eta(w)\left(
\frac{1}{|z-w|^{n-1}}-\frac{1}{|x-w|^{n-1}}\right)
\,d w\notag\\
&\gtrsim\int_{B({\bf 0},8)\setminus B({\bf 0},2)}
g(w)\eta(w)\,d w
\ge \int_{F_0}g(w)\,d w
\sim\int_{F_0}g(w)\,d w |x-y|^{1+n(\beta-\frac1p)}
\end{align*}
with the implicit positive constants depending only
on $n$, $p$, and $\beta$.
Combining this and \eqref{cddd-necee20}, we further
conclude that there exists a positive constant
$C_{(n,p,\beta)}$, depending only on $n$, $p$,
and $\beta$, such that, for any
$(x,y)\in(F_1\times F_3)\cup(F_3\times F_1)$,
\begin{align*}
\left|f(x)-f_{B_{x,y}}\right|
=\left|\fint_{B_{x,y}}[f(x)-f(w)]\,d w\right|
\ge C_{(n,p,\beta)}\int_{F_0}g(w)\,d w|x-y|^{1+n
(\beta-\frac1p)}.
\end{align*}
This then implies that
$(F_1\times F_3)\cup(F_3\times F_1)\subset
E^{(1)}_{\lambda_{(n,p,\beta)},n(\beta-\frac1p)}[f]$
with $\lambda_{(n,p,\beta)}:=
C_{(n,p,\beta)}\int_{F_0}g(w)\,d w$.
Applying this and \eqref{cddd-necee19}, similar
to \eqref{cddd-necee5}, we obtain
\begin{align*}
\upsilon(F_1\cup F_3)
\left[\fint_{F_0}g(w)\,d w\right]^p
\lesssim\int_{B({\bf 0},8)\setminus
B({\bf 0},2)}\left[g(w)\right]^p
\upsilon(w)\,d w.
\end{align*}
By this and an argument similar to that
used in the estimation of \eqref{cddd-necee8},
we further find that, for any nonnegative
$g\in L^1_{\loc}$,
\begin{align}\label{cddd-necee21}
\upsilon(F_1\cup F_3)
\left[\fint_{F_0}g(w)\,d w\right]^p
\lesssim\int_{F_0}\left[g(w)\right]^p
\upsilon(w)\,d w.
\end{align}
Letting $g:={\bf 1}_{F_0}$,
we obtain $\upsilon(F_1\cup F_3)
\lesssim\upsilon(F_0)$.
This further implies that
$\upsilon(B({\bf 0},2)\setminus
B({\bf 0},1))
\lesssim \upsilon(B({\bf 0},6)\setminus
B({\bf 0},3))$
and
$\upsilon(B({\bf 0},48)\setminus
B({\bf 0},24))
\lesssim \upsilon(B({\bf 0},6)\setminus
B({\bf 0},3)).$
From these and the fact that \eqref{cddd-necee2}
has the dilation invariance, we deduce that
\begin{align*}
\upsilon\left(B({\bf 0},16)\setminus
B({\bf 0},8)\right)
\lesssim\upsilon\left(B({\bf 0},2)\setminus
B({\bf 0},1)\right)
\lesssim \upsilon\left(B({\bf 0},6)\setminus
B({\bf 0},3)\right)
\end{align*}
and hence
\begin{align}\label{cddd-necee22}
\upsilon\left(B\left({\bf 0},\frac{16}{3}\right)\setminus
B\left({\bf 0},\frac{8}{3}\right)\right)
\lesssim\upsilon\left(B({\bf 0},2)\setminus
B({\bf 0},1)\right).
\end{align}
Choose a cube $Q_0\subset B({\bf 0},5)\setminus
B({\bf 0},4)$ such that
$|Q_0|\sim 1$. Then, applying
the fact that $$B({\bf 0},5)\setminus
B({\bf 0},4)\subset B\left({\bf 0},\frac{16}{3}\right)\setminus
B\left({\bf 0},\frac{8}{3}\right)$$ and \eqref{cddd-necee22},
we further obtain
\begin{align*}
\upsilon(Q_0)\le
\upsilon\left(B\left({\bf 0},\frac{16}{3}\right)\setminus
B\left({\bf 0},\frac{8}{3}\right)\right)\lesssim
\upsilon\left(B({\bf 0},2)\setminus
B({\bf 0},1)\right)\le
\upsilon(A_1\cup A_3).
\end{align*}
Using this and replacing
$g$ by $g{\bf 1}_{Q_0}$ in \eqref{cddd-necee21},
we conclude that, for any nonnegative
$g\in L^1_{\loc}$,
\begin{align*}
\left[\fint_{Q_0}
g(w)\, d w \right]^p
\lesssim\frac{1}{\upsilon(Q_0)}
\int_{Q_0}\left[g(w)\right]^p
\upsilon(w)\,d w.
\end{align*}
This, combined with the fact that \eqref{cddd-necee2}
has both the dilation invariance and the translation invariance,
further implies that, for any cube
$Q\in \mathcal{Q}$ and any nonnegative
$g\in L^1_{\loc}$,
\begin{align*}
\left[\fint_{Q}
g(w)\, d w \right]^p
\lesssim\frac{1}{\upsilon(Q)}
\int_{Q}\left[g(w)\right]^p
\upsilon(w)\,d w.
\end{align*}
From this and Lemma \ref{ApProperty}(iii),
we infer that $\upsilon\in A_p$
and hence the present proposition in this case.
This then finishes the proof of
Proposition \ref{cddd-nece}.
\end{proof}

\begin{proposition}\label{bsvy-nece}
Let $p\in[1,\infty)$, $q\in(0,\infty)$,
$\gamma\in\mathbb{R}$, and $\upsilon$
be a nonnegative locally integrable function
on $\mathbb{R}^n$. If there exists a
positive constant $C$ such that, for any
$f\in \dot{Y}^{1,p}_{\upsilon}
\cap L^1_{\loc}$,
\begin{align*}
&\sup_{\lambda\in(0,\infty)}
\lambda^p\int_{\mathbb{R}^n}
\left[\int_{\mathbb{R}^n}{\bf 1}_{E_{\lambda,
\frac{\gamma}{q}}[f]}(x,y)|x-y|^{\gamma-n}\,dy
\right]^{\frac pq}\upsilon(x)\,dx
\le C\|f\|_{\dot{Y}^{1,p}_{\upsilon}}^p,
\end{align*}
then $\upsilon\in A_p$.
\end{proposition}

\begin{proof}
We prove the present proposition
by considering the following two cases for $n$ and $p$.

\emph{Case 1: $n=1$ and $p\in(1,\infty)$.}
In this case,
repeating an argument similar to that used in the proof of
\cite[Theorem 3.7]{dlyyz.arxiv}
with $E_{f}(\lambda,p)$ therein replaced by
$E_{\lambda,\frac{\gamma}{q}}[f]$, we
find that $\upsilon\in A_p(\mathbb{R})$ and hence
finish the proof of the present proposition
in this case.

\emph{Case 2: $n\in\mathbb{N}$ and $p=1$
or $n,p\in(1,\infty)$.} In this case,
repeating an argument similar to that used
in Cases 2 and 3 of the proof of
Proposition \ref{cddd-nece} with $f_{B_{x,y}}$
replaced by $f(y)$, we obtain $\upsilon\in A_p$.
Therefore, the present proposition holds also in this case.
This then finishes the proof of Proposition \ref{bsvy-nece}.
\end{proof}

To obtain Theorem \ref{cddd},
we also need the following conclusion, which
easily follows from \cite[Theorem 2.8 and Remark 2.9]{b82};
we omit the details.

\begin{lemma}\label{w and h}
If $p\in(1,\infty)$ and
$\upsilon\in A_p$,
then
$\dot{H}^{1,p}_\upsilon=\dot{W}^{1,p}_{\upsilon}$
with equivalent seminorms.
\end{lemma}

Via the above preparations,
we now show Theorem \ref{cddd}.

\begin{proof}[Proof of Theorem \ref{cddd}]
By Theorem \ref{thm-cddd},
Proposition \ref{cddd-nece}, and Lemma \ref{w and h},
we find that (i) and (ii) are equivalent.
In addition, from Proposition \ref{bsvy-nece},
we deduce that, if (iii) holds, then (i) holds.

Therefore, it suffices to prove that (i) implies (iii).
Assume $\upsilon\in A_p$, $q\in(0,\infty)$ satisfies
$n(1-\frac1q)<1$, and
$\gamma\in\Gamma_{p,q}$.
If $p=1$, then, using Lemmas \ref{ApProperty}
and \ref{Lpv}, we conclude that
the weighted Lebesgue space $L^1_\upsilon$
under consideration satisfies
Proposition \ref{upper} with
$p:=1$ and hence, for any
$f\in \dot{W}^{1,1}_\upsilon$,
\eqref{cddd-necee11} holds.
On the other hand, if $p\in(1,\infty)$,
then, from (iv) and (vi) of Lemma \ref{ApProperty} and
the assumption $n(1-\frac1q)<1$, we infer that
there exists $r\in(1,\infty)$ such that
$\upsilon\in A_{\frac pr}$ and
$n(\frac1r-\frac1q)<1$. Applying this,
(iv) and (v) of Lemma \ref{ApProperty}, and
Lemma \ref{Lpv}, we find that
the space $L^p_\upsilon$
under consideration satisfies
Proposition \ref{upper} with $p$ therein replaced
by $r$ and hence \eqref{cddd-necee11} holds
for any $f\in \dot{W}^{1,p}_\upsilon$.
Thus, from Lemma \ref{w and h},
it follows that
$\dot{Y}^{1,p}_{\upsilon}
=\dot{W}^{1,p}_\upsilon$
and hence (i) holds.
By this, we conclude that (i) implies (iii),
which then completes the proof of Theorem \ref{cddd}.
\end{proof}

\section{Gagliardo--Nirenberg Type Inequalities}\label{sec-gn}

In this section, we aim to apply Theorems \ref{t953}
and \ref{xgamma<0} to establish three Gagliardo--Nirenberg type inequalities
in the framework of ball Banach function spaces,
which are sharp or improve the corresponding known results.

Assume that $0\le s_1< s< s_2\le 1$, $1\le p_1,p,p_2\le \infty$, and $\theta\in(0,1)$
satisfy
$s=\theta s_1+(1-\theta)s_2$ and $\frac1p=\frac{\theta}{p_1}+\frac{1-\theta}{p_2}.$
Gagliardo--Nirenberg inequalities focus on
the validity of the following inequalities:
for any $f\in W^{s_1,p_1}\cap W^{s_2,p_2}$,
\begin{align}\label{gn}
\|f\|_{W^{s,p}}\lesssim \|f\|_{W^{s_1,p_1}}^\theta
\|f\|_{W^{s_2,p_2}}^{1-\theta},
\end{align}
where $W^{s,p}$, $W^{s_1,p}$, and $W^{s_2,p}$ denote
the (fractional) Sobolev spaces
(see, for example, \cite{bm18}).
In what follows,
we only consider the critical case $s_2=p_2=1$.
In this case, Cohen et al.\ \cite[Theorem 1.5]{cddd03}
obtained \eqref{gn} when $s_1\in(0,\frac{1}{p_1})$ by using the
almost wavelet characterization of $W^{1,1}$.
Moreover, Brezis and Mironescu \cite{bm18} showed \eqref{gn}
fails if $s_1\ge \frac{1}{p_1}$, that is, the assumption
on the smoothness exponent $s_1$
considered by Cohen et al. \cite{cddd03} is necessary.
Notably, a suitable substitute for \eqref{gn}
in the range $s_1\ge \frac{1}{p_1}$ has recently been investigated by
Brezis et al. \cite{bvy2021} based on BSVY formulae.
In addition, when $s_1=0$, it is known that \eqref{gn} holds if and only if
$p_1\in [1,\infty)$. Also, Brezis et al. \cite{bvy2021}
established a good substitute for \eqref{gn} when $p_1=\infty$
via BSVY formulae.

In this section, applying both the sharp real interpolation
between critical weighted Sobolev and weighted
Besov spaces (Theorem \ref{t953}) and
the sharp BSVY formula in
ball Banach function spaces (Theorem \ref{xgamma<0}),
we establish several variants of
the aforementioned critical Gagliardo--Nirenberg type inequalities
in the framework of ball Banach function spaces.

We first recall the concept of
Triebel--Lizorkin spaces based on general ball Banach function spaces.
Let $\{\theta_j\}_{j\in\mathbb{Z}_+}$ be a
standard inhomogeneous smooth dyadic
decomposition of unity (see, for instance, \cite[(3.3)]{syy24}),
$X$ a ball Banach function space defined in Definition \ref{1659},
$s\in\mathbb{R}$, and $q\in(0,\infty)$.
The \emph{Triebel--Lizorkin space} $F^s_{X,q}$,
based on $X$, is defined to be the set of all
$f\in\mathcal{S}'$ such that
$\|f\|_{F^s_{X,q}}:=\|(\sum_{j\in\mathbb{Z}_+}
2^{jsq}|\theta_j\ast f|^q)^{\frac1q}\|_{X}<\infty;$
we refer to \cite[Section 3]{fsyy25} for its real-variable
characterizations.
Now, we establish the
following inequality, which extends Cohen et al.
\cite[Theorem 1.5]{cddd03} to various Sobolev type spaces.

\begin{theorem}\label{c1016}
Let $s,s_1\in\mathbb{R}$, $p,p_1\in(1,\infty)$, and
$\theta\in(0,1)$ satisfy
\begin{align}\label{idd}
s=1-\theta+\theta s_1\ \text{and}\ \frac{1}{p}=1-\theta+\frac{\theta}{p_1}.
\end{align}
Assume that
$X$ is a ball Banach function space such that the
Hardy--Littlewood maximal operator
$M$ is bounded or endpoint bounded on $X'$.
If $s_1\in(-\infty,\frac{1}{p_1})\cup(1,\infty)$,
then there exists a positive constant $C$
such that, for any $f\in W^{1,X}\cap F^{s_1}_{X^{p_1},p_1}$,
\begin{align}\label{e853}
\|f\|_{F^{s}_{X^p,p}}
\le C\|f\|_{W^{1,X}}^{1-\theta}\|f\|_{F^{s_1}_{X^{p_1},p_1}}^\theta.
\end{align}
\end{theorem}
\begin{proof}
Fix $f\in W^{1,X}\cap F^{s_1}_{X^{p_1},p_1}$. We first assume that
$M$ is bounded on $X'$. Then, by Lemma \ref{lem-extrap}(ii), we conclude that, for any $g\in X'$
with $\|g\|_{X'}\leq1$, $\upsilon_g:=R_{X'}g\in A_1$.
From this, Lemma \ref{4.6}, Theorem \ref{t953}, \eqref{e1126},
and Lemma \ref{lem-extrap} again, we deduce that
\begin{align}\label{e2139}
\|f\|_{F^{s}_{X^p,p}}
&\le\sup_{\{g\in X':\|g\|_{X'}\le 1\}}\|f\|_{B^{s,\upsilon_g}_{p,p}}
\lesssim \sup_{\{g\in X':\|g\|_{X'}\le 1\}}
\|f\|_{W^{1,1}_{\upsilon_g}}^{1-\theta}\|f\|_{B^{s_1,\upsilon_g}_{p_1,p_1}}^{\theta}\notag\\
&\le \left(\sup_{\{g\in X':\|g\|_{X'}\le 1\}}
\|f\|_{W^{1,1}_{\upsilon_g}}\right)^{1-\theta}
\left(\sup_{\{g\in X':\|g\|_{X'}\le 1\}}\|f\|_{B^{s_1,\upsilon_g}_{p_1,p_1}}\right)^{\theta}
\lesssim\|f\|_{W^{1,X}}^{1-\theta}\|f\|_{F^{s_1}_{X^{p_1},p_1}}^\theta.
\end{align}
Next, we assume that $M$ is endpoint bounded on $X'$ and
let $\{\theta_m\}_{m\in\mathbb{N}}$ be as in
Definition \ref{def-epbd}. Applying \eqref{e2139}
with $X$ replaced by $X^{\frac{1}{\theta_m}}$ for any $m\in\mathbb{N}$
and an argument similar to that used in the proof in \cite[p.\,56]{dlyyz.arxiv},
we find that \eqref{e2139} still holds in this case, which then
completes the proof of Theorem \ref{c1016}.
\end{proof}

\begin{remark}\label{rem52}
\begin{enumerate}
\item[(i)] In Theorem \ref{c1016},
the assumption $(-\infty,\frac{1}{p_1})\cup(1,\infty)$
on the smoothness exponent $s_1$ is sharp.
Indeed, let $X:=L^1$.
Then, in this case, by \cite[Theorem 1]{bm18}, we find that
\eqref{e853} fails if $s_1\in[\frac{1}{p_1},1)$.

\item[(ii)]Theorem \ref{c1016} when $X:=L^1$ reduces to \cite[Theorem 1.5]{cddd03}
and, to our best knowledge, the other cases of Theorem \ref{c1016} are new.

\item[(iii)] It is still unclear whether \eqref{e853} holds when $s_1=1$.
\end{enumerate}
\end{remark}

Furthermore,
repeating the proof of \cite[Theorem 4.4]{zyy23-1}
with Theorem 3.35 therein replaced by
Theorem \ref{xgamma<0} here, we obtain the following inequality,
which removes the restriction $s_1<\frac{1}{p_1}$
of Theorem \ref{c1016} by involving the weak norms.

\begin{theorem}\label{2256}
Let $\gamma\in \Gamma_{1,1}$, $s,s_1\in[0,1)$,
$p,p_1\in(1,\infty)$, and $\theta\in(0,1)$ satisfy \eqref{idd}. Assume that
$X$ is a ball Banach function space satisfying all the assumptions of
Theorem \ref{xgamma<0} with $p:=q:=1$ therein and the above $\gamma$.
Then there exists a positive constant $C$ such that,
for any $f\in \dot{W}^{1,X}$,
\begin{align}\label{939}
&\sup_{\lambda\in(0,\infty)}\lambda
\left\|\int_{\mathbb{R}^n}
\mathbf{1}_{E_{\lambda,\frac{\gamma}{p}+s-1}[f]}(\cdot,y)
\left|\cdot-y\right|^{\gamma-n}\,dy\right\|_{X}^{\frac{1}{p}}\notag\\
&\quad\leq C
\sup_{\lambda\in(0,\infty)}\lambda
\left\|\int_{\mathbb{R}^n}
\mathbf{1}_{E_{\lambda,\frac{\gamma}{p_1}+s_1-1}[f]}(\cdot,y)
\left|\cdot-y\right|^{\gamma-n}\,dy\right\|_{X}^{\frac{\theta}{p_1}}
\left\|\,\left|\nabla f\right|\,\right\|_X^{1-\theta}.
\end{align}
\end{theorem}

On the other hand,
repeating the proof of \cite[Theorem 4.1]{zyy23-1}
with Theorem 3.35 therein replaced by
Theorem \ref{xgamma<0} here, we also obtain
the following Gagliardo--Nirenberg type inequality
with $s_1=0$.

\begin{theorem}\label{2255}
Let $\gamma\in \Gamma_{1,1}$, $s,\theta\in(0,1)$, and $p,p_1\in[1,\infty]$
satisfy \eqref{idd}.
Assume that $X$ is a ball Banach function space
satisfying all the assumptions of
Theorem \ref{xgamma<0} with $p:=q:=1$ therein and the above $\gamma$.
Then the following statements hold.
\begin{enumerate}
\item[\textup{(i)}]
If $p_1\in[1,\infty)$, then there exists a positive constant $C$ such that,
for any $f\in\dot{W}^{1,X}$,
\begin{align}\label{2053}
\sup_{\lambda\in(0,\infty)}\lambda
\left\|\int_{\mathbb{R}^n}
\mathbf{1}_{E_{\lambda,\frac{\gamma}{p}+s-1}[f]}(\cdot,y)
\left|\cdot-y\right|^{\gamma-n}\,dy\right\|_{X}^\frac{1}{p}
\leq C\|f\|_{X^{p_1}}^{\theta}\left\|\,\left|\nabla f\right|\,\right\|_X^{1-\theta}.
\end{align}
\item[\textup{(ii)}]
If $p_1=\infty$, then there exists a
positive constant $C$ such that,
for any $f\in\dot{W}^{1,X}$,
\begin{align}\label{2054}
\sup_{\lambda\in(0,\infty)}\lambda
\left\|\int_{\mathbb{R}^n}
\mathbf{1}_{E_{\lambda,\frac{\gamma}{p}+s-1}[f]}(\cdot,y)
\left|\cdot-y\right|^{\gamma-n}\,dy\right\|_{X}^\frac{1}{p}
\leq C\|f\|_{L^\infty}^{\theta}\left\|\,\left|\nabla f\right|\,\right\|_X^{1-\theta}.
\end{align}
\end{enumerate}
\end{theorem}

\begin{remark}\label{rm48}
\begin{enumerate}
\item[(i)] In Theorem \ref{2255}(i), it is not necessary
to assume that $X^{p_1}$ is a ball Banach function space
as in \cite[Theorem 4.1(i)]{zyy23-1} because it must be true if
$X$ is a ball Banach function space and $p_1\in[1,\infty)$ (see,
for instance, \cite[Section 1.d]{lt79}).

\item[(ii)] Theorems \ref{2256} and \ref{2255}
when $\gamma\in (-\infty,-1)$ and
$n\in\mathbb{N}\cap[2,\infty)$ are new and,
in the other cases, reduce to
\cite[Theorems 4.4 and 4.1]{zyy23-1},
respectively.
\end{enumerate}
\end{remark}

\section{Applications to Specific Function Spaces}
\label{S5}

The main target of this section is to apply
Theorems \ref{xgamma<0}, \ref{c1016},
\ref{2256}, and \ref{2255}
to obtain various Gagliardo--Nirenberg type inequalities and
BSVY formulae in specific function spaces,
including weighted Lebesgue spaces
(see Subsection \ref{5.4}),
(Bourgain--)Morrey type spaces (see Subsection \ref{5.1}),
variable Lebesgue spaces (see Subsection \ref{5.3}),
Orlicz and Orlicz-slice
spaces (see Subsection \ref{5.5}), and
local and global generalized Herz spaces
(see Subsection \ref{5.2}). All the results
are new or essentially improve those corresponding ones obtained
in \cite{zyy23-1} by removing the restriction $n=1$
and making the range of the exponent $q$ sharp.

\subsection{Weighted Lebesgue Spaces}\label{5.4}

Let $r\in(0,\infty)$ and $\upsilon$
be a nonnegative locally integrable function on
$\mathbb{R}^n$.
As pointed out in \cite[p.\,86]{shyy2017},
the weighted Lebesgue space $L^r_\upsilon$
is a ball quasi-Banach function space,
but it may not be a Banach function space.
If $r\in[1,\infty)$ and $\upsilon\in A_r$,
then define
$r_\upsilon:=
\inf\{r\in[1,\infty):
\upsilon\in A_r\}.$

Now, we obtain the BSVY formula and
Gagliardo--Nirenberg type inequalities in weighted Lebesgue spaces
as follows.
\begin{theorem}\label{1657}
Let $r\in[1,\infty)$ and $\upsilon\in A_r$.
\begin{enumerate}
\item[{\rm (i)}] If $q\in(0,\infty)$ satisfies $n(\frac{r_\upsilon}{r}-
\frac1q)<1$ and $\gamma\in \Gamma_{r,q}$,
then, for any $f\in \dot{W}^{1,r}_\upsilon$,
\eqref{xgamma<0e1} holds with $X:=L^r_\upsilon$.
\item[{\rm (ii)}] Let all the symbols be the same as in Theorem \ref{c1016}.
If $s_1\in(-\infty,\frac{1}{p_1})\cup(1,\infty)$,
then, for any $f\in F^{s_1}_{L^{rp_1}_\upsilon,p_1}\cap W^{1,r}_\upsilon$,
\eqref{e853} holds with $X:=L^r_\upsilon$.
\item[{\rm (iii)}]Let all the symbols be the same as in Theorem \ref{2256}.
Then, for any $f\in\dot{W}^{1,r}_\upsilon$, \eqref{939} holds with
$X:=L^r_\upsilon$.
\item[{\rm (iv)}]Let all the symbols be the same as in Theorem \ref{2255} and
$f\in\dot{W}^{1,r}_\upsilon$.
If $p_1\in[1,\infty)$, then \eqref{2053} holds with $X:=L^r_\upsilon$ and,
if $p_1=\infty$, then \eqref{2054} holds with $X:=L^r_\upsilon$.
\end{enumerate}
\end{theorem}

\begin{proof}
We first show (i). Repeating an argument similar to that used in
the proof that (i) implies (iii) of
Theorem \ref{cddd} and replacing $p$ therein by
$r$ and replacing $r$ therein by
$\frac{r}{r_\upsilon+\varepsilon}$ with
$\varepsilon\in(0,r-r_\upsilon)$, we conclude that
the weighted Lebesgue space $L^r_\upsilon$
under consideration satisfies all the assumptions
of Theorem \ref{xgamma<0} with $p:=\frac{r}{r_\upsilon+\varepsilon}$
therein and $q,\gamma$ as in (i).
Thus, using Theorem \ref{xgamma<0}, we obtain (i).

Next, from the proof of \cite[Theorem 5.14]{zyy23-1},
we infer that the weighted Lebesgue space
$L^r_{\upsilon}$ under consideration satisfies
all the assumptions of Theorem \ref{xgamma<0} with $p:=q:=1$ therein
and $\gamma\in\Gamma_{1,1}$.
Thus, Theorems \ref{c1016}, \ref{2256}, and \ref{2255} respectively
imply (ii), (iii), and (iv),
which then completes the proof of Theorem \ref{1657}.
\end{proof}
\begin{remark}
\begin{enumerate}
  \item[{\rm(i)}] Theorem \ref{1657}(i) when
  $r=1$ and $\gamma\in(-\infty,-q)$ or
  $r\in(1,\infty)$ and $q\in(r,\infty)$ is new
  and, in the other cases, coincides with \cite[Theorem 5.14]{zyy23-1}.
  To the best of our knowledge, Theorem \ref{1657}(ii) is completely new.
  \item[{\rm(ii)}]
  Both (iii) and (iv) of Theorem \ref{1657}
  when $r = 1$, $\gamma\in (-\infty,-1)$, and
  $n\in\mathbb{N}\cap[2,\infty)$ are new and,
  in the other cases, coincide with \cite[Theorems 5.15 and 5.16]{zyy23-1}.
\end{enumerate}
\end{remark}

\subsection{(Bourgain--)Morrey Type Spaces}\label{5.1}

In this subsection, we investigate the
BSVY formulae and Gagliardo--Nirenberg type inequalities
in (Bourgain--)Morrey type
spaces, including Morrey, Bourgain--Morrey, Besov--Bourgain--Morrey,
and Triebel--Lizorkin--Bourgain--Morrey spaces.

First, we give a short introduction
on the history of Bourgain--Morrey type spaces.
Recall that Morrey spaces [see Definition
\ref{df-43}(i) for the definition] were originally
introduced by Morrey \cite{m1938} in 1938
to study the regularity of the solution of
partial differential equations. Nowadays,
these spaces have proved important in the theory of
partial differential equations, potential theory,
and harmonic analysis; we refer to
\cite{hms2017,hms2020,hss2018,hs2017} and the monographs
\cite{a2015,sdh20201,sdh20202,ysy10}.
On the other hand, in order to study
the Bochner--Riesz multiplier
problems in ${\mathbb R}^3$, Bourgain \cite{b91}
introduced a new function space
which is a special case of Bourgain--Morrey spaces.
After that, to explore some problems on
nonlinear Schr\"{o}dinger equations, Masaki \cite{m16-09234}
introduced Bourgain--Morrey spaces for
the full range of exponents [see Definition
\ref{df-43}(ii) for the definition]. Later on,
Bourgain--Morrey spaces play important roles
in the study of some linear and nonlinear
partial differential equations (see,
for example, \cite{BV07,b98,kpv00,ms18,ms18-2,mvv96,mvv99})
and, moreover, their several fundamental real-variable
properties were recently revealed by Hatano
et al. \cite{hnsh22}.
Very recently, via combining the structures
of both Besov spaces (or Triebel--Lizorkin spaces)
and Bourgain--Morrey spaces,
Zhao et al. \cite{zstyy23} and Hu et al. \cite{hly23}
introduced Besov--Bourgain--Morrey spaces
[see Definition
\ref{df-43}(iii) for the definition] and
Triebel--Lizorkin--Bourgain--Morrey spaces
[see Definition
\ref{df-43}(iv) for the definition], respectively.

Recall that the \emph{dyadic cube}
$Q_{j,m}$ of $\mathbb{R}^n$ with $j\in\mathbb{Z}$ and $m\in\mathbb{Z}^n$
is defined by setting
$Q_{j,m}: = 2^j[m+ (0,1]^n ].$
We now present the definitions of aforementioned
Bourgain--Morrey type spaces
as follows (see, for example, \cite{hnsh22,hly23,zstyy23}).

\begin{definition}\label{df-43}
Let $0<t\le u\le r\le\infty$ and $\tau\in(0,\infty]$.
\begin{enumerate}
  \item[{\rm(i)}] The \emph{Morrey space}
  $\mathcal{M}^u_t$
is defined to be the
set of all $f\in L^t_{\loc}$ such that
\begin{equation*}
\|f\|_{\mathcal{M}^u_t}:=
\sup_{j\in\mathbb{Z},\,m\in\mathbb{Z}^n}
\left|Q_{j,m}\right|^{\frac 1u-\frac 1t}
\left\|f{\mathbf{1}}_{Q_{j,m}}
\right\|_{L^{t}}<\infty.
\end{equation*}
  \item[{\rm(ii)}] The \emph{Bourgain--Morrey space}
$\mathcal{M}^u_{t,r}$ is defined to be the
set of all $f\in L^t_{\loc}$ such that
\begin{equation*}
\|f\|_{\mathcal{M}^u_{t,r}}
:=\left\{\sum_{j\in{\mathbb Z},\,m\in{\mathbb Z}^n}
\left[\left|Q_{j,m}\right|^{\frac{1}{u}-\frac{1}{t}}
\left\|f{\mathbf{1}}_{Q_{j,m}}
\right\|_{L^{t}}\right]^r\right\}^{\frac{1}{r}},
\end{equation*}
with the usual modification made when $r=\infty$, is finite.
  \item[{\rm(iii)}] The \emph{Besov--Bourgain--Morrey space
  $\mathcal{M}\dot{B}_{t,r}^{u,\tau}$} is defined to be the set
of all $f\in L^t_{\loc}$ such that
\begin{align*}
\|f\|_{\mathcal{M}\dot{B}_{t,r}^{u,\tau}}:=
\left\{\sum_{j\in{\mathbb Z}}
\left[\sum_{m\in{\mathbb Z}^n}
\left\{\left|Q_{j, m}\right|^{\frac{1}{u}-\frac{1}{t}}
\left\|f{\mathbf{1}}_{Q_{j,m}}\right\|_{L^{t}}
\right\}^r \right]^\frac{\tau}{r}\right\}^\frac{1}{\tau},
\end{align*}
with the usual modifications made when $r=\infty$ or $\tau=\infty$,
is finite.
\item[{\rm(iv)}] The \emph{Triebel--Lizorkin--Bourgain--Morrey space
$\mathcal{M}\dot{F}_{t,r}^{u,\tau}$} is defined to be the set
of all $f\in L^t_{\loc}$ such that
\begin{align*}
\|f\|_{\mathcal{M}\dot{F}_{t,r}^{u,\tau}}
:=\left(\int_{\mathbb{R}^n}
\left\{\int_{0}^{\infty}\left[\xi^{n(\frac1u
-\frac1t-\frac1r)}\left\|f{\bf 1}_{B(y,\xi)}
\right\|_{L^t}\right]^\tau\,
\frac{d\xi}{\xi}\right\}^{\frac r\tau}\,dy\right)^{\frac1r},
\end{align*}
with the usual modifications made when $r=\infty$ or $\tau=\infty$,
is finite.
\end{enumerate}
\end{definition}

\begin{remark}\label{bbm-rm}
Let $0<t\le u\le r\le\infty$ and
$\tau\in(0,\infty]$.
\begin{enumerate}
  \item[{\rm(i)}] It is obvious that
  $\mathcal{M}^{u}_{t,\infty}=\mathcal{M}^u_t$
  and $\mathcal{M}\dot{B}^{u,r}_{t,r}
  =\mathcal{M}^u_{t,r}$.
  Moreover, from \cite[Proposition 3.6(iii)]{hly23},
  we deduce that $\mathcal{M}\dot{F}^{u,r}_{t,r}
  =\mathcal{M}^u_{t,r}$.
  \item[{\rm(ii)}] Applying
\cite[Theorem 2.9]{zstyy23},
we find that $f\in\mathcal{M}\dot{B}^{u,\tau}_{t,r}$
if and only if $f\in L^t_{{\loc}}$
and
\begin{align}\label{bbm-rme1}
\left\|f\right\|_{\mathcal{M}\dot{B}^{u,\tau}
_{t,r}}^\star
:=\left(\int_{0}^{\infty}\left\{\int_{\mathbb{R}^n}
\left[\xi^{n(\frac1u-\frac1t-\frac1r)}
\left\|f{\bf 1}_{B(y,\xi)}\right\|_{L^t}
\right]^r\,dy\right\}^{\frac \tau r}\,\frac{d\xi}{\xi}\right)
^{\frac1\tau},
\end{align}
with the usual modifications made when
$r=\infty$ or $\tau=\infty$,
is finite. Moreover, $\|\cdot\|_{\mathcal{M}\dot{B}
^{u,\tau}_{t,r}}^\star$ is
an equivalent quasi-norm of $\mathcal{M}\dot{B}
^{u,\tau}_{t,r}$.
  \item[{\rm(iii)}] By \cite[p.\,87]{shyy2017},
we conclude that, if $1\leq t\leq u<\infty$,
then the Morrey space $\mathcal{M}_t^u$
is a ball Banach function space, but may not be a Banach function space.
Moreover, as proved in \cite[Lemma 4.10]{zyy23},
if $1\le t<u<r\le\infty$ and $\tau\in[1,\infty)$
or if $1\le t\le u\le r\le \tau=\infty$, then
the Besov--Bourgain--Morrey space
$\mathcal{M}\dot{B}^{u,\tau}_{t,r}$
is also a ball Banach function space. In addition,
applying a similar argument to that used in the proof of
\cite[Lemma 4.10]{zyy23}, we can further show that,
if $1\le t<u<r\le \infty$ and $\tau\in(0,\infty)$
or if $1\le t<u\le r<\tau=\infty$,
then the Triebel--Lizorkin--Bourgain--Morrey space
$\mathcal{M}\dot{F}^{u,\tau}_{t,r}$
is a ball Banach function space.
\end{enumerate}
\end{remark}

We now obtain the following conclusions for
Bourgain--Morrey type spaces via the corresponding
ones for weighted Sobolev spaces in Theorem \ref{1657}.

\begin{theorem}\label{1524}
Let $1\le t<u< r\le\infty$, $\tau\in(0,\infty]$, $A\in\{B,F\}$,
and $f\in \dot{W}^{1,\mathcal{M}\dot{A}^{u,\tau}_{t,r}}$.
\begin{enumerate}
\item[{\rm (i)}] Let $q\in(0,\infty)$ satisfy $n(\frac1t-\frac1q)<1$ and
$\gamma\in\Gamma_{t,q}$. Then \eqref{xgamma<0e1} holds with $X:=\mathcal{M}\dot{A}^{u,\tau}_{t,r}$.
\item[{\rm (ii)}]Let all the symbols be the same as in Theorem \ref{c1016}.
Further assume $f\in F^{s_1}_{\mathcal{M}\dot{A}^{up_1,\tau p_1}_{tp_1,rp_1},p_1}\cap
W^{1,\mathcal{M}\dot{A}^{u,\tau}_{t,r}}$.
If $s_1\in(-\infty,\frac{1}{p_1})\cup(1,\infty)$,
then \eqref{e853} holds with $X:=\mathcal{M}\dot{A}^{u,\tau}_{t,r}$.
\item[{\rm (iii)}]Let all the symbols be the same as in Theorem \ref{2256}.
Then \eqref{939} holds with
$X:=\mathcal{M}\dot{A}^{u,\tau}_{t,r}$.
\item[{\rm (iv)}]Let all the symbols be the same as in Theorem \ref{2255}.
If $p_1\in[1,\infty)$, then \eqref{2053} holds with $X:=\mathcal{M}\dot{A}^{u,\tau}_{t,r}$
and, if $p_1=\infty$, then \eqref{2054} holds with $X:=\mathcal{M}\dot{A}^{u,\tau}_{t,r}$.
\end{enumerate}
\end{theorem}

\begin{remark}
In Theorem \ref{1524},
if $r=\tau=\infty$, then the Bourgain--Morrey type
space $\mathcal{M}\dot{A}^{u,\tau}_{t,r}$
under consideration coincides with the Morrey
space $\mathcal{M}^{u}_t$;
in this case, Theorem \ref{1524}(i) improves
\cite[Theorem 5.1]{zyy23-1} because,
when $t=1$ and $\gamma\in(-\infty,-q)$ or when
$t\in(1,n]$, $\gamma\in(-\infty,0)$,
and $q\in[t,\infty)$, Theorem \ref{1524}(i) is new and,
in the other cases, coincides with \cite[Theorem 5.1]{zyy23-1}.
Moreover, both (iii) and (iv) of Theorem \ref{1524}
respectively improve \cite[Theorems 5.3 and 5.2]{zyy23-1} because,
when $t=1$, $\gamma\in(-\infty,-1)$,
and $n\in\mathbb{N}\cap[2,\infty)$,
(iii) and (iv) of Theorem \ref{1524} are new and, in the other cases,
extend \cite[Theorems 5.3 and 5.2]{zyy23-1} from any $f\in C^1$ with $|\nabla f|\in C_{\rm c}$
to any $f\in M^u_t$. Even in this case, Theorem \ref{1524}(ii) is new.
In addition,
to the best of our knowledge, in the other cases
different from $r=\tau=\infty$,
Theorem \ref{1524} is completely new.
\end{remark}

To prove this theorem, we first
establish the following equivalent characterization
of Bourgain--Morrey type spaces,
which coincides with the well-known characterization
of Morrey spaces when $r=\tau=\infty$;
see, for instance, \cite[Proposition 285]{sdh20201}.

\begin{proposition}\label{equi-bm}
Let $0<t< u< r\le\infty$,
$\tau\in(0,\infty]$,
$A\in\{B,F\}$, and
$\theta\in(0,\min\{1,
t(\frac1u-\frac1r)\})$ and, for any $y\in\mathbb{R}^n$
and $\xi\in(0,\infty)$, let
$\upsilon_{y,\xi}:=[M({\bf 1}
_{B(y,\xi)})]^{1-\theta}$.
Then $f\in\mathcal{M}\dot{A}^{u,\tau}_{t,r}$
if and only if $f\in L^t_{\loc}$
and $[f]_{\mathcal{M}\dot{A}^{u,\tau}_{t,r}}<\infty$,
where
\begin{align}\label{df-mb}
[f]_{\mathcal{M}\dot{B}^{u,\tau}_{t,r}}
:=\left[\int_{0}^{\infty}\left\{\int_{\mathbb{R}^n}
\left[\xi^{n(\frac1u-\frac1t-\frac1r)}
\left\|f\right\|_{L^t_{\upsilon_{y,\xi}}}
\right]^r\,dy\right\}^{\frac \tau r}\,\frac{d\xi}{\xi}\right]
^{\frac1\tau}
\end{align}
and
\begin{align}\label{df-mf}
[f]_{\mathcal{M}\dot{F}^{u,\tau}_{t,r}}
:=\left[\int_{\mathbb{R}^n}
\left\{\int_{0}^{\infty}\left[\xi^{n(\frac1u
-\frac1t-\frac1r)}\left\|f\right\|_{L^t
_{\upsilon_{y,\xi}}}\right]^\tau\,
\frac{d\xi}{\xi}\right\}^{\frac r\tau}\,dy\right]^{\frac1r}
\end{align}
with the usual modifications made when
$r=\infty$ or $\tau=\infty$.
Moreover, $[\cdot]_{\mathcal{M}\dot{A}^{u,\tau}_{t,r}}$
is an equivalent quasi-norm of
$\mathcal{M}\dot{A}^{u,\tau}_{t,r}$ with
the positive equivalence constants depending
only on $n,t,u,r,\tau$, and $\theta$.
\end{proposition}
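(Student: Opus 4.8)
The plan is to establish the two-sided bound $[f]_{\mathcal{M}\dot{A}^{u,\tau}_{p,r}(\mathbb{R}^n)}\sim\|f\|_{\mathcal{M}\dot{A}^{u,\tau}_{p,r}(\mathbb{R}^n)}$ for every $f\in L^p_{\rm loc}(\mathbb{R}^n)$, with both sides allowed to be infinite and the equivalence constants depending only on $n$, $p$, $u$, $r$, $\tau$, and $\theta$, handling $A=B$ and $A=F$ in parallel. For the direction $\|f\|_{\mathcal{M}\dot{A}^{u,\tau}_{p,r}(\mathbb{R}^n)}\lesssim[f]_{\mathcal{M}\dot{A}^{u,\tau}_{p,r}(\mathbb{R}^n)}$, I would first observe that, by the Lebesgue differentiation theorem, $\mathbf{1}_{B(y,t)}\le M(\mathbf{1}_{B(y,t)})$ almost everywhere, and that $M(\mathbf{1}_{B(y,t)})\le1$ everywhere together with $0<1-\theta<1$ gives $M(\mathbf{1}_{B(y,t)})\le[M(\mathbf{1}_{B(y,t)})]^{1-\theta}=\upsilon_{y,t}$; hence $\|f\mathbf{1}_{B(y,t)}\|_{L^p(\mathbb{R}^n)}\le\|f\|_{L^p_{\upsilon_{y,t}}(\mathbb{R}^n)}$ for every $(y,t)\in\mathbb{R}^n\times(0,\infty)$. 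Feeding this pointwise comparison into the mixed norms, one immediately gets $\|f\|^\star_{\mathcal{M}\dot{B}^{u,\tau}_{p,r}(\mathbb{R}^n)}\le[f]_{\mathcal{M}\dot{B}^{u,\tau}_{p,r}(\mathbb{R}^n)}$ when $A=B$ and $\|f\|_{\mathcal{M}\dot{F}^{u,\tau}_{p,r}(\mathbb{R}^n)}\le[f]_{\mathcal{M}\dot{F}^{u,\tau}_{p,r}(\mathbb{R}^n)}$ when $A=F$, and then Remark \ref{bbm-rm}(ii) closes the $B$-case while Definition \ref{df-43}(iv) directly handles the $F$-case.

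For the reverse inequality $[f]_{\mathcal{M}\dot{A}^{u,\tau}_{p,r}(\mathbb{R}^n)}\lesssim\|f\|_{\mathcal{M}\dot{A}^{u,\tau}_{p,r}(\mathbb{R}^n)}$, I would fix $(y,t)$ and decompose $\mathbb{R}^n$ into $B(y,2t)$ and the annuli $A_k:=B(y,2^{k+1}t)\setminus B(y,2^kt)$ with $k\in\mathbb{N}$. The elementary maximal-function estimates $M(\mathbf{1}_{B(y,t)})\sim1$ on $B(y,2t)$ and $M(\mathbf{1}_{B(y,t)})(x)\sim2^{-kn}$ for $x\in A_k$ yield $\upsilon_{y,t}\lesssim2^{-kn(1-\theta)}$ on each $A_k$ (with $2^{-0\cdot n(1-\theta)}=1$ on $B(y,2t)$), whence
\[
\|f\|_{L^p_{\upsilon_{y,t}}(\mathbb{R}^n)}\lesssim\left(\sum_{k\in\mathbb{Z}_+}2^{-kn(1-\theta)}\left\|f\mathbf{1}_{B(y,2^{k+1}t)}\right\|_{L^p(\mathbb{R}^n)}^p\right)^{\frac1p}.
\]
Multiplying by $t^{n(\frac1u-\frac1p-\frac1r)}$ and rewriting $t^{n(\frac1u-\frac1p-\frac1r)}=2^{-(k+1)n(\frac1u-\frac1p-\frac1r)}(2^{k+1}t)^{n(\frac1u-\frac1p-\frac1r)}$ inside the sum converts this into
\[
t^{n(\frac1u-\frac1p-\frac1r)}\|f\|_{L^p_{\upsilon_{y,t}}(\mathbb{R}^n)}\lesssim\left(\sum_{k\in\mathbb{Z}_+}c_k\left[(2^{k+1}t)^{n(\frac1u-\frac1p-\frac1r)}\left\|f\mathbf{1}_{B(y,2^{k+1}t)}\right\|_{L^p(\mathbb{R}^n)}\right]^p\right)^{\frac1p},
\]
where $c_k$ is a geometric sequence with ratio $2^{-n[p(\frac1u-\frac1r)-\theta]}$; the hypothesis $\theta<p(\frac1u-\frac1r)$ is precisely what makes this ratio less than $1$, so that $\sum_{k}c_k<\infty$ and $\sum_{k}c_k^{s}<\infty$ for every $s>0$.

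It then remains to push this pointwise bound through the two-index mixed norm defining $[f]_{\mathcal{M}\dot{A}^{u,\tau}_{p,r}(\mathbb{R}^n)}$ and recover $\|f\|^\star_{\mathcal{M}\dot{B}^{u,\tau}_{p,r}(\mathbb{R}^n)}$, resp. $\|f\|_{\mathcal{M}\dot{F}^{u,\tau}_{p,r}(\mathbb{R}^n)}$, exploiting the dilation invariance $\int_0^\infty G(y,2^{k+1}t)^s\,\frac{dt}{t}=\int_0^\infty G(y,t)^s\,\frac{dt}{t}$. Since $p<u<r$ forces $\frac{r}{p}>1$, I would use Minkowski's inequality in $L^{r/p}(\mathbb{R}^n,dy)$ to move the sum over $k$ outside the $L^r_y$-norm (in the $B$-case), and then, in the $L^\tau(\frac{dt}{t})$-norm, apply Minkowski's inequality in $L^{\tau/p}(\frac{dt}{t})$ when $\tau\ge p$ and the embedding $(\sum_{k}a_k)^{\tau/p}\le\sum_{k}a_k^{\tau/p}$ when $\tau<p$; in the $F$-case the annular bound already factors over $y$, so only the step in the $t$-variable is needed before taking $L^r_y$. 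In all cases the geometric factors $c_k$ sum to an absolute constant, giving the desired inequality; the cases $r=\infty$ or $\tau=\infty$ follow from the obvious supremum analogues of these steps, and Remark \ref{bbm-rm}(ii) is invoked once more in the $B$-case. The only real obstacle is bookkeeping: keeping the scaling exponent $n(\frac1u-\frac1p-\frac1r)$ consistent under the substitution $t\mapsto2^{k+1}t$, and ordering the Minkowski/embedding steps so that every outer exponent relative to $p$ is treated correctly across the regimes $r$ versus $p$ and $\tau$ versus $p$ (including $r=\infty$ and $\tau=\infty$).
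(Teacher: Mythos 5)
Your proposal is correct and follows essentially the same route as the paper's proof: the easy direction via $\mathbf{1}_{B(y,t)}\le\upsilon_{y,t}$ almost everywhere, and the reverse direction via the decomposition into $B(y,2t)$ and dyadic annuli, the estimate $M(\mathbf{1}_{B(y,t)})\lesssim 2^{-kn}$ on the $k$-th annulus, the dilation invariance of $\frac{dt}{t}$, and a geometric series whose convergence is exactly the hypothesis $\theta<p(\frac1u-\frac1r)$, with Remark \ref{bbm-rm}(ii) used to pass to the equivalent quasi-norm in the $B$-case. The only cosmetic difference is the final summation step: the paper sums over $k$ in one stroke using the exponent $d:=\min\{1,p,r,\tau\}$ (and writes out only $A=B$, the case $A=F$ being similar), whereas you case-split via Minkowski's inequality in $L^{r/p}_y$ and $L^{\tau/p}_t$ versus the elementary embedding when $\tau<p$, which is equally valid.
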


\begin{proof}
We only consider the case $A=B$ because the proof of the
case $A=F$ is similar and hence we omit the details.
From Remark \ref{bbm-rm}(ii), we infer that
it suffices to show that, for any
$f\in L^t_{{\loc}}$,
\begin{align}\label{equi-bme1}
[f]_{\mathcal{M}\dot{B}^{u,\tau}_{t,r}}
\sim\left\|f\right\|_
{\mathcal{M}\dot{B}^{u,\tau}_{t,r}}^\star,
\end{align}
where $\|\cdot\|_{\mathcal{M}\dot{A}^{u,\tau}_{t,r}}^\star$
is the same as in \eqref{bbm-rme1}.
Indeed, using the fact that, for any
$g\in L^1_{{\loc}}$,
$|g|\le M(g)$ almost everywhere
in $\mathbb{R}^n$,
we immediately obtain, for any $f\in L^t_{{\loc}}$,
$\left\|f\right\|_
{\mathcal{M}\dot{B}^{u,\tau}_{t,r}}^\star
\le [f]_{\mathcal{M}\dot{B}^{u,\tau}_{t,r}}$.

Conversely, we next prove
$[f]_{\mathcal{M}\dot{B}^{u,\tau}_{t,r}}
\lesssim\left\|f\right\|_
{\mathcal{M}\dot{B}^{u,\tau}_{t,r}}^\star$
for any $f\in L^t_{{\loc}}$.
To do so, fix $f\in L^t_{{\loc}}$.
Then, by the fact that, for any
$g\in L^1_{{\loc}}$,
$|g|\le M(g)$ almost everywhere
in $\mathbb{R}^n$ again, we conclude that,
for any $y\in\mathbb{R}^n$ and $\xi\in(0,\infty)$,
\begin{align}\label{equi-bme2}
\|f\|_{L^t_{\upsilon_{y,\xi}}}^t
&=\int_{B(y,2\xi)}|f(x)|^t\left[M
\left({\bf 1}_{B(y,\xi)}\right)(x)\right]^{1-\theta}\,dx
+\sum_{k=2}^{\infty}
\int_{B(y,2^k\xi)\setminus B(y,2^{k-1}\xi)}\cdots\notag\\
&\le\int_{B(y,2\xi)}|f(x)|^t\,dx
+\sum_{k=2}^{\infty}\int_{B(y,2^k\xi)\setminus B(y,2^{k-1}\xi)}
|f(x)|^t\left[M
\left({\bf 1}_{B(y,\xi)}\right)(x)\right]^{1-\theta}\,dx.
\end{align}
Now, fix $y\in\mathbb{R}^n$, $\xi\in(0,\infty)$,
$k\in\mathbb{N}\cap[2,\infty)$, and
$x\in B(y,2^k\xi)\setminus B(y,2^{k-1}\xi)$.
Choose a ball $B:=B(x_B,r_B)$
such that $x\in B$ and $|B(y,\xi)\cap B|\ne 0$,
where $x_B\in\mathbb{R}^n$ and $r_B\in(0,\infty)$.
Then there exists $z\in B(y,\xi)\cap B $ and hence
$2r_B\ge|x-x_B|+|x_B-z|\ge|x-z|\ge|x|-|z|>2^{k-2}\xi.$
This further implies that $|B|\gtrsim(2^k \xi)^n$ and hence
\begin{align*}
M\left({\bf 1}_{B(y,\xi)}\right)(x)
=\sup_{B\ni x}\frac{|B(y,\xi)\cap B|}{|B|}
\lesssim\frac{\xi^n}{(2^k\xi)^n}=2^{-kn},
\end{align*}
where the implicit positive constant depends only
on $n$. From this and \eqref{equi-bme2},
we deduce that, for any $y\in\mathbb{R}^n$ and
$\xi\in(0,\infty)$,
\begin{align*}
\|f\|_{L^t_{\upsilon_{y,\xi}}}^t
\lesssim\left\|f{\bf 1}_{B(y,2\xi)}\right\|_{L^t}^t
+\sum_{k=2}^{\infty}2^{-kn(1-\theta)}
\left\|f{\bf 1}_{B(y,2^k\xi)}\right\|_{L^t}^t.
\end{align*}
Using this, a change of variables, and
the assumption $\theta<t(\frac1u-\frac1r)$
and letting $d:=\min\{1,t,r,\tau\},$
we find that
\begin{align*}
[f]_{\mathcal{M}\dot{B}^{u,\tau}_{t,r}}^d
&\lesssim\sum_{k=1}^{\infty}2^{-\frac{kn(1-\theta)d}{t}}
\left(\int_{0}^{\infty}\left\{\int_{\mathbb{R}^n}
\left[\xi^{n(\frac1u-\frac1t-\frac1r)}
\left\|f{\bf 1}_{B(y,2^k\xi)}\right\|_{L^t}
\right]^r\,dy\right\}^{\frac \tau r}\,\frac{d\xi}{\xi}\right)
^{\frac d\tau}\\
&=\sum_{k=1}^{\infty}2^{-\frac{kn(1-\theta)d}{t}}
2^{-kn(\frac1u-\frac1t-\frac1r)d}
\left(\int_{0}^{\infty}\left\{\int_{\mathbb{R}^n}
\left[\xi^{n(\frac1u-\frac1t-\frac1r)}
\left\|f{\bf 1}_{B(y,\xi)}\right\|_{L^t}
\right]^r\,dy\right\}^{\frac \tau r}\,\frac{d\xi}{\xi}\right)
^{\frac d\tau}\\
&=\left[\|f\|_{\mathcal{M}\dot{B}^{u,\tau}_{t,r}}^\star\right]^d\sum_{k=1}^{\infty}
2^{-kn(\frac1u-\frac1r-\frac\theta t)}\sim
\left[\|f\|_{\mathcal{M}\dot{B}^{u,\tau}_{t,r}}^\star\right]^d.
\end{align*}
This then finishes the proof of \eqref{equi-bme1} and hence
Proposition \ref{equi-bm}.
\end{proof}

Next, we show Theorem \ref{1524}.

\begin{proof}[Proof of Theorem \ref{1524}]
Fix $f\in \dot{W}^{1,\mathcal{M}\dot{A}^{u,\tau}_{t,r}}$.
Then $|\nabla f|\in \mathcal{M}\dot{A}^{u,\tau}_{t,r}$.
We now prove (i).
Choose $\theta\in(0,\min\{1,t(\frac{1}{u}
-\frac1r)\})$
and, for any $y\in\mathbb{R}^n$ and $\xi\in(0,\infty)$,
let $\upsilon_{y,\xi}$ be as in Proposition \ref{equi-bm}.
Then, applying Proposition \ref{equi-bm},
we find that, for any $g\in \mathcal{M}\dot{A}^{u,\tau}_{t,r}$,
\begin{align}\label{1524e2}
\|g\|_{\mathcal{M}\dot{A}^{u,\tau}_{t,r}}
\sim[g]_{\mathcal{M}\dot{A}^{u,\tau}_{t,r}}
\end{align}
with the positive equivalence constants depending
only on $n,t,u,r,\tau$, and $\theta$,
where $[\cdot]_{\mathcal{M}\dot{A}^{u,\tau}_{t,r}}$
is as in \eqref{df-mb} when $A=B$ or as in
\eqref{df-mf} when $A=F$.
From this and $|\nabla f|\in \mathcal{M}\dot{A}^{u,\tau}_{t,r}$,
it follows that, for almost every $y\in\mathbb{R}^n$
and $\xi\in(0,\infty)$,
$\|\,|\nabla f|\,\|_{L^t_{\upsilon_{y,\xi}}}
<\infty$,
which further implies $f\in\dot{W}^{1,t}_{\upsilon_{y,\xi}}$.

We now prove that, for any $y\in\mathbb{R}^n$
and $\xi\in(0,\infty)$,
the weighted Lebesgue space $L^t_{\upsilon_{y,\xi}}$
satisfies all the assumptions of Proposition \ref{upper}
with $p:=t$ therein and $q,\gamma$ as in Theorem \ref{1524}(i).
Indeed, fix $y\in\mathbb{R}^n$ and $\xi\in(0,\infty)$. Then, using
\cite[Theorem 7.7]{D2000}, we conclude that
$\upsilon_{y,\xi}\in A_1$ and
$[\upsilon_{y,\xi}]_{A_1}$
depends only on both $n$ and $\theta$.
Therefore, by the proof of \cite[Theorem 5.14]{zyy23-1},
we find that
\begin{enumerate}
  \item[{\rm(i)}] the weighted Lebesgue space
  $L^t_{\upsilon_{y,\xi}}$ is a ball Banach
  function space having an absolutely continuous
  norm;
  \item[{\rm(ii)}] the Hardy--Littlewood maximal operator
  $M$ is bounded on $[(L^t_{\upsilon_{y,\xi}})^
  \frac1t]'$
  and
  \begin{align}\label{1524e3}
  \left\|M\right\|_{[(L^t_{\upsilon_{y,\xi}})^
  \frac1t]'
  \to[(L^t_{\upsilon_{y,\xi}})^
  \frac1t]'}\lesssim
  [\upsilon_{y,\xi}]_{A_1}\lesssim1,
  \end{align}
  where the implicit positive constants are independent of
  both $y$ and $\xi$.
\end{enumerate}
These further imply that $L^t_{\upsilon_{y,\xi}}$
satisfies all the assumptions of Proposition \ref{upper}
with $p:=t$ therein and $q,\gamma$ as in Theorem \ref{1524}(i).
Combining this and \eqref{1524e3} again,
we obtain
\begin{align*}
\sup_{\lambda\in(0,\infty)}\lambda
\left\|\left[\int_{\mathbb{R}^n}
{\bf 1}_{E_{\lambda,\frac{\gamma}{q}}[f]}(\cdot,y)
|\cdot-y|^{\gamma-n}\,dy\right]^{\frac1q}\right\|_
{L^t_{\upsilon_{y,\xi}}}
\lesssim\left\|\,\left|\nabla f\right|\,\right\|_
{L^t_{\upsilon_{y,\xi}}}
\end{align*}
with the implicit positive constant independent
of $y$ and $\xi$.
From this, \eqref{1524e2}, Remark \ref{bbm-rm}(iii),
and Proposition \ref{lower},
we further deduce (i).

Moreover, applying an argument similar
to that used in the proof of (i), we easily find that
$L^t_{\upsilon_{y,\xi}}$
satisfies all the assumptions of Theorems \ref{c1016},
\ref{2256}, and \ref{2255}.
This, together with \eqref{1524e2} and H\"older's inequality,
further implies the remaining conclusions,
which then completes the proof of Theorem \ref{1524}.
\end{proof}

\subsection{Variable Lebesgue Spaces}\label{5.3}

Let $u:\mathbb{R}^n\to[0,\infty)$ be a
measurable function and let
$u_-:={\mathop\mathrm{\,ess\,inf\,}}_{x\in\mathbb{R}^n}u(x)$ and
$u_+:={\mathop\mathrm{\,ess\,sup\,}}_{x\in\mathbb{R}^n}u(x).$
A function $u:\mathbb{R}^n\to[0,\infty)$ is said to be \emph{globally
log-H\"older continuous} if there exist
$u_{\infty}\in\mathbb{R}$ and a positive constant $C$ such that, for any
$x,y\in\mathbb{R}^n$,
\begin{equation*}
|u(x)-u(y)|\le \frac{C}{\log(e+\frac{1}{|x-y|})}\ \ \text{and}\ \
|u(x)-u_\infty|\le \frac{C}{\log(e+|x|)}.
\end{equation*}
Then the \emph{variable Lebesgue space
$L^{u(\cdot)}$}, associated with the function
$u:\mathbb{R}^n\to[0,\infty)$, is defined to be the set
of all $f\in\mathscr{M}$ such that
\begin{equation*}
\|f\|_{L^{u(\cdot)}}:=\inf\left\{\lambda
\in(0,\infty):\int_{\mathbb{R}^n}\left[\frac{|f(x)|}
{\lambda}\right]^{u(x)}\,dx\le1\right\}
\end{equation*}
is finite.
By Diening et al.
\cite[Lemma 3.2.6 and Theorem 3.2.13]{dhhr2011},
we easily find that, when $u(\cdot):\mathbb{R}^n\to(0,\infty)$,
the variable Lebesgue space $L^{u(\cdot)}$
is a quasi-Banach function space and,
when $1\le u_-\le u_+<\infty$,
$L^{u(\cdot)}$ is a Banach function space
and hence a ball Banach function space (see also
\cite[Subsection 7.8]{shyy2017}).
For more studies about variable Lebesgue spaces,
we refer to
\cite{cf2013,cw2014,dhr2009,kr1991,ns2012,n1950,n1951}.

We establish the following
BSVY formula and Gagliardo--Nirenberg type inequalities in variable Lebesgue spaces.

\begin{theorem}\label{1022}
Let $u:\mathbb{R}^n\to(0,\infty)$ be globally
log-H\"older continuous satisfying
$1\le u_-\le u_+<\infty$ and
$f\in \dot{W}^{1,L^{u(\cdot)}}$.
\begin{enumerate}
\item[{\rm(i)}] Let $q\in(0,\infty)$ satisfy
$n(\frac{1}{u_-}-\frac1q)<1$
and let $\gamma\in\Gamma_{u_-,q}$. Then \eqref{xgamma<0e1} holds
with $X:=L^{u(\cdot)}$.
\item[{\rm (ii)}] Let all the symbols be the same as in Theorem \ref{c1016}.
Further assume $f\in F^{s_1}_{L^{p_1u(\cdot)},p_1}\cap W^{1,L^{u(\cdot)}}$.
If $s_1\in(-\infty,\frac{1}{p_1})\cup(1,\infty)$, then
\eqref{e853} holds with $X:=L^{u(\cdot)}$.
  \item[{\rm(iii)}]Let all the symbols be the same as in Theorem \ref{2256}.
Then \eqref{939} holds with
$X:=L^{u(\cdot)}$.
  \item[{\rm(iv)}] Let all the symbols be the same as in Theorem \ref{2255}.
If $p_1\in[1,\infty)$, then
\eqref{2053} holds with $X:=L^{u(\cdot)}$.
If $p_1=\infty$, then
\eqref{2054} holds with $X:=L^{u(\cdot)}$.
\end{enumerate}
\end{theorem}

\begin{proof}
Applying the proof of \cite[Theorem 5.10]{zyy23-1},
we conclude that all the assumptions of Theorems \ref{xgamma<0},
\ref{c1016}, \ref{2256}, and \ref{2255}
hold for the variable Lebesgue space
$L^{u(\cdot)}$ under consideration.
By this, we further find that, for any
$f\in\dot{W}^{1,L^{u(\cdot)}}$,
Theorems \ref{xgamma<0}, \ref{c1016},
\ref{2256}, and \ref{2255}
hold with $X:=L^{u(\cdot)}$.
This then finishes the proof of Theorem \ref{1022}.
\end{proof}

\begin{remark}
\begin{enumerate}
  \item[{\rm(i)}] Theorem \ref{1022}(i) when
  $u_-=1$ and $\gamma\in(-\infty,-q)$ or when
  $u_-\in(1,n]$ and $q\in[u_-,\infty)$ is new
  and, in the other cases, coincides with \cite[Theorem 5.10]{zyy23-1}.
  \item[{\rm(ii)}] Both (iii) and (iv) of
  Theorem \ref{1022}
  when $u_-= 1$, $\gamma\in (-\infty,-1)$, and
  $n\in\mathbb{N}\cap[2,\infty)$ are new and,
  in the other cases, coincide, respectively,
  with \cite[Theorems 5.12 and 5.11]{zyy23-1}.
  To the best of our knowledge, Theorem \ref{1022}(ii) is completely new.
\end{enumerate}
\end{remark}

\subsection{Orlicz and Orlicz-Slice Spaces}\label{5.5}

Recall that a nondecreasing function $\Phi:[0,\infty)
\to[0,\infty)$ is called an \emph{Orlicz function}
if $\Phi$ satisfies that
\begin{enumerate}
\item[(i)]
$\Phi(0)= 0$;
\item[(ii)]
for any $t\in(0,\infty)$,
$\Phi(t)\in(0,\infty)$;
\item[(iii)]
$\lim_{t\to\infty}\Phi(t)=\infty$.
\end{enumerate}
An Orlicz function $\Phi$ is
said to be of \emph{lower}
(resp. \emph{upper}) \emph{type} $u$ for some
$u\in\mathbb{R}$ if there exists a positive constant
$C_{(u)}$ such that,
for any $t\in[0,\infty)$ and
$s\in(0,1)$ [resp. $s\in[1,\infty)$],
$\Phi(st)\le C_{(p)} s^u\Phi(t).$
In what follows, we always assume that
$\Phi:[0,\infty)\to [0,\infty)$
is an Orlicz function with both positive lower
type $u_{\Phi}^-$ and positive upper type $u_{\Phi}^+$.
Then the \emph{Orlicz space $L^\Phi$}
is defined to be the set of all $f\in\mathscr{M}$
such that
\begin{equation*}
\|f\|_{L^\Phi}:=\inf\left\{\lambda\in
(0,\infty):\int_{\mathbb{R}^n}\Phi\left(\frac{|f(x)|}
{\lambda}\right)\,dx\le1\right\}
\end{equation*}
is finite.
By the definition, we can easily show
that $L^\Phi$
is a quasi-Banach function space
(see \cite[Section~7.6]{shyy2017}).
For more studies about Orlicz spaces,
we refer to \cite{dfmn2021,ns2014,rr2002}.

Using Theorems \ref{xgamma<0}, \ref{c1016},
\ref{2256}, and \ref{2255},
we obtain the following BSVY formula
and Gagliardo--Nirenberg type
inequalities in Orlicz spaces.

\begin{theorem}\label{1912}
Let $\Phi$ be an Orlicz function with both
positive lower type $u^-_{\Phi}$
and positive upper type $u^+_\Phi$.
Let $1\leq u^-_{\Phi}\leq u^+_{\Phi}<\infty$
and $f\in \dot{W}^{1,L^\Phi}$.
\begin{enumerate}
\item[{\rm(i)}] Let $q\in(0,\infty)$ satisfy
$n(\frac{1}{u^-_\Phi}-\frac1q)<1$
and let $\gamma\in \Gamma_{u^-_\Phi,q}$. Then \eqref{xgamma<0e1} holds
with $X:=L^{\Phi}$.
\item[{\rm(ii)}] Let all the symbols be the same as in Theorem \ref{c1016}.
Further assume $f\in F^{s_1}_{L^{\Phi_{p_1}},p_1}\cap W^{1,L^{\Phi}}$,
where $\Phi_{p_1}(t):=\Phi(t^{p_1})$ for any $t\in[0,\infty)$.
If $s_1\in(-\infty,\frac{1}{p_1})\cup(1,\infty)$, then
\eqref{e853} holds with $X:=L^{\Phi}$.
\item[{\rm(iii)}]Let all the symbols be the same as in Theorem \ref{2256}.
Then \eqref{939} holds with $X:=L^{\Phi}$.
\item[{\rm(iv)}]Let all the symbols be the same as in Theorem \ref{2255}.
If $p_1\in[1,\infty)$, then
\eqref{2053} holds with $X:=L^{\Phi}$.
If $p_1=\infty$, then
\eqref{2054} holds with $X:=L^{\Phi}$.
\end{enumerate}
\end{theorem}

\begin{proof}
From the proof of \cite[Theorem 5.23]{zyy23-1},
it follows that the Orlicz space
$L^{\Phi}$ under consideration satisfies
all the assumptions of Theorems \ref{xgamma<0}, \ref{c1016},
\ref{2256}, and \ref{2255}.
This then further implies that Theorems \ref{xgamma<0},
\ref{c1016}, \ref{2256}, and \ref{2255} hold with
$X:=L^{\Phi}$,
which completes the proof of Theorem \ref{1912}.
\end{proof}

\begin{remark}
\begin{enumerate}
  \item[{\rm(i)}] Theorem \ref{1912}(i) when
  $u^-_\Phi=1$ and $\gamma\in(-\infty,-q)$ or when
  $u^-_\Phi\in(1,n]$ and $q\in[u^-_\Phi,\infty)$ is new
  and, in the other cases, coincides with \cite[Theorem 5.23]{zyy23-1}.
  \item[{\rm(ii)}] Both (iii) and (iv) of
  Theorem \ref{1912}
  when $u^-_\Phi= 1$, $\gamma\in (-\infty,-1)$, and
  $n\in\mathbb{N}\cap[2,\infty)$ are new and,
  in the other cases, coincide, respectively,
  with \cite[Theorems 5.25 and 5.24]{zyy23-1}.
  To the best of our knowledge, Theorem \ref{1912}(ii) is completely new.
\end{enumerate}
\end{remark}

Moreover, for any given Orlicz function $\Phi$ and
$t,r\in(0,\infty)$,
the \emph{Orlicz-slice space}
$(E_\Phi^r)_t$ is defined to be the set of all
$f\in\mathscr{M}$ such that
\begin{equation*}
\|f\|_{(E_\Phi^r)_t} :=\left\{\int_{\mathbb{R}^n}
\left[\frac{\|f\mathbf{1}_{B(x,t)}\|_{L^\Phi}}
{\|\mathbf{1}_{B(x,t)}\|_{L^\Phi}}\right]
^r\,dx\right\}^{\frac{1}{r}}<\infty.
\end{equation*}
The Orlicz-slice spaces were first introduced in
\cite{zyyw2019} as a generalization of both
the slice space of Auscher and Mourgoglou
\cite{am2019,ap2017} and the Wiener amalgam space
in \cite{h2019,h1975,kntyy2007}. From both
\cite[Lemma 2.28]{zyyw2019} and \cite[Remark 7.41(i)]{zwyy2021},
we deduce that the Orlicz-slice space $(E_\Phi^r)_t$ is a
ball Banach function space, but in general is not a
Banach function space.

Applying Theorems \ref{xgamma<0}, \ref{c1016},
\ref{2256}, and \ref{2255},
we find the following conclusions.

\begin{theorem}\label{2045}
Let $t\in(0,\infty)$, $r\in[1,\infty)$,
and $\Phi$ be an Orlicz function with
both positive lower type $u^-_{\Phi}$
and positive upper type $u^+_\Phi$.
Let $1\leq u^-_{\Phi}\leq u^+_{\Phi}<\infty$
and $f\in \dot{W}^{1,(E_\Phi^r)_t}$.
\begin{enumerate}
  \item[{\rm(i)}] Let $q\in(0,\infty)$ satisfy
  $n(\frac{1}{\min\{r,u^-_\Phi\}}-\frac1q)<1$ and let
  $\gamma\in\Gamma_{\min\{r,u^-_\Phi\},q}$. Then \eqref{xgamma<0e1} holds
with $X:=(E_\Phi^r)_t$.
\item[{\rm(ii)}] Let all the symbols be the same as in Theorem \ref{c1016}.
Further assume $f\in F^{s_1}_{(E_{\Phi_{p_1}}^r)_t,p_1}\cap W^{1,(E_\Phi^r)_t}$.
If $s_1\in(-\infty,\frac{1}{p_1})\cup(1,\infty)$, then
\eqref{e853} holds with $X:=(E_\Phi^r)_t$.
\item[{\rm(iii)}]Let all the symbols be the same as in Theorem \ref{2256}.
Then \eqref{939} holds with $X:=(E_\Phi^r)_t$.
\item[{\rm(iv)}]Let all the symbols be the same as in Theorem \ref{2255}.
If $p_1\in[1,\infty)$, then
\eqref{2053} holds with $X:=(E_\Phi^r)_t$.
If $p_1=\infty$, then
\eqref{2054} holds with $X:=(E_\Phi^r)_t$.
\end{enumerate}
\end{theorem}

\begin{proof}
By the proof of \cite[Theorem 5.28]{zyy23-1},
we conclude that the Orlicz-slice space
$(E_\Phi^r)_t$ under consideration
satisfies all the assumptions of Theorems \ref{xgamma<0}, \ref{c1016},
\ref{2256}, and \ref{2255}.
This further implies that
Theorems \ref{xgamma<0}, \ref{c1016},
\ref{2256}, and \ref{2255} with
$X:=(E_\Phi^r)_t$ hold,
which then completes the proof of Theorem \ref{2045}.
\end{proof}

\begin{remark}
\begin{enumerate}
  \item[{\rm(i)}] We point out that Theorem \ref{2045}(i) when
  $\min\{r,u^-_\Phi\}=1$ and $\gamma\in(-\infty,-q)$ or when
  $\min\{r,u^-_\Phi\}\in(1,n]$ and $q\in[\min
  \{r,u^-_\Phi\},\infty)$ is new
  and, in the other cases, coincides with \cite[Theorem 5.28]{zyy23-1}.
  \item[{\rm(ii)}] Both (iii) and (iv) of
  Theorem \ref{2045}
  when $\min\{r,u^-_\Phi\}= 1$, $\gamma\in (-\infty,-1)$, and
  $n\in\mathbb{N}\cap[2,\infty)$ are new and,
  in the other cases, coincide, respectively,
  with \cite[Theorems 5.30 and 5.29]{zyy23-1}.
  To the best of our knowledge, Theorem \ref{2045}(ii) is completely new.
\end{enumerate}
\end{remark}

\subsection{Local and Global Generalized Herz Spaces}\label{5.2}

The main target of this subsection is to
show the BSVY formula as well as Gagliardo--Nirenberg type inequalities
in local and global generalized Herz spaces.
Recall that the classical Herz space was originally
introduced by Herz \cite{herz} to
study Bernstein's theorem on absolutely
convergent Fourier transforms.
Recently, Rafeiro and Samko \cite{rs2020}
introduced the local and global generalized Herz spaces
(see Definition \ref{gh})
which generalize the classical
Herz spaces and generalized Morrey type spaces.
For more studies on Herz spaces,
we refer to \cite{gly1998,hy1999,ly1996,
lyh2320,rs2020,zyz2022}.

Let $\mathbb{R}_+:=(0,\infty)$
and $\omega$ be a nonnegative function on $\mathbb{R}_+$.
Then the function $\omega$ is said to be
\emph{almost increasing}
(resp. \emph{almost decreasing})
on $\mathbb{R}_+$ if there exists
a constant $C\in[1,\infty)$ such that,
for any $t,\tau\in\mathbb{R}_+$ satisfying
$t\leq\tau$ (resp. $t\geq\tau$),
$\omega(t)\leq C\omega(\tau).$

\begin{definition}
The \emph{function class} $M(\mathbb{R}_+)$
is defined to be the set of all positive functions
$\omega$ on $\mathbb{R}_+$ such that, for any $0<\delta<N<\infty$,
$0<\inf_{t\in(\delta,N)}\omega(t)\leq\sup_
{t\in(\delta,N)}\omega(t)<\infty$ and
there exist four constants $\alpha_{0},
\beta_{0},\alpha_{\infty},\beta_{\infty}\in\mathbb{R}$ such that
\begin{enumerate}
  \item[(i)] for any $t\in(0,1]$,
  $\omega(t)t^{-\alpha_{0}}$ is almost increasing and
  $\omega(t)t^{-\beta_{0}}$ is almost decreasing;
  \item[(ii)] for any $t\in[1,\infty)$,
  $\omega(t)t^{-\alpha_{\infty}}$ is
  almost increasing and  $\omega(t)t^{-\beta_{\infty}}$ is
  almost decreasing.
\end{enumerate}
\end{definition}

We now present the Matuszewska--Orlicz indices
as follows, which were introduced in
\cite{MO,mo65} and characterize the properties of functions at
origin or infinity (see also \cite{lyh2320}).

\begin{definition}
Let $\omega$ be a positive function on $\mathbb{R}_+$. Then
the \emph{Matuszewska--Orlicz indices}
$m_0(\omega)$, $M_0(\omega)$,
$m_\infty(\omega)$, and $M_\infty(\omega)$ of
$\omega$ are defined, respectively, by setting
$$m_0(\omega):=\sup_{t\in(0,1)}
\frac{\log\left[\limsup\limits_{h\to0^+}\frac{\omega(ht)}
{\omega(h)}\right]}{\log t},\
M_0(\omega):=\inf_{t\in(0,1)}\frac{\log\left[\liminf\limits_{h\to0^+}
\frac{\omega(ht)}{\omega(h)}\right]}{\log t},$$
$$
m_{\infty}(\omega):=\sup_{t\in(1,\infty)}
\frac{\log\left[\liminf\limits_{h\to\infty}
\frac{\omega(ht)}{\omega(h)}\right]}{\log t},\
\mathrm{and}\ M_\infty(\omega)
:=\inf_{t\in(1,\infty)}\frac{\log\left[\limsup\limits_
{h\to\infty}\frac{\omega(ht)}{\omega(h)}\right]}{\log t}.
$$
\end{definition}

The following concept of generalized Herz spaces
were originally introduced by
Rafeiro and Samko in \cite[Definition 2.2]{rs2020}
(see also \cite{lyh2320}).

\begin{definition}\label{gh}
Let $u,r\in(0,\infty]$ and $\omega\in M(\mathbb{R}_+)$.
\begin{enumerate}
\item[{\rm(i)}] Let $\xi\in\mathbb{R}^n$.
The \emph{local generalized Herz space}
$\dot{\mathcal{K}}^{u,r}_{\omega,\xi}$
is defined to be the set of all
locally integrable functions
$f$ on $\mathbb{R}^n\setminus\{\xi\}$ such that
\begin{equation*}
\|f\|_{\dot{\mathcal{K}}^{u,r}_{\omega,\xi}}:=\left\{\sum_{k\in\mathbb{Z}}
\left[\omega\left(2^{k}\right)\right]^{r}
\left\|f{\bf 1}_{B({\bf 0},2^{k})\setminus B({\bf 0},2^{k-1})}
\right\|_{L^{u}}^{r}\right\}^{\frac{1}{r}}<\infty.
\end{equation*}
\item[{\rm(ii)}] The
\emph{global generalized
Herz space} $\dot{\mathcal{K}}^{u,r}_{\omega}$
is defined to be the set of all
$f\in L^u_{\loc}$ such that
$\|f\|_{\dot{\mathcal{K}}^{u,r}_{\omega}}
:=\sup_{\xi\in\mathbb{R}^n}
\|f\|_{\dot{\mathcal{K}}^{u,r}_{\omega,\xi}}<\infty$.
\end{enumerate}
\end{definition}

\begin{remark}
As showed in \cite[Theorems 1.2.46 and 1.2.48]{lyh2320},
if $\xi\in\mathbb{R}^n$,
$u,r\in[1,\infty]$, and $\omega\in M(\mathbb{R}_+)$
satisfies $-\frac{n}{u}<m_0(\omega)\leq M_0(\omega)<\frac{n}{u'}$,
then the local generalized Herz space
$\dot{\mathcal{K}}^{u,r}_{\omega,\xi}$
is a ball Banach function space;
if $u,r\in[1,\infty]$ and
$\omega\in M(\mathbb{R}_+)$ satisfies both
$m_0(\omega)\in(-\frac{n}{u},\infty)$
and $M_\infty(\omega)\in(-\infty,0)$,
then the global generalized Herz space
$\dot{\mathcal{K}}^{u,r}_{\omega}$ is
a ball Banach function space.
Moreover, using \cite[Remark 4.15]{cjy23},
we find that these Herz spaces may not be Banach function spaces.
\end{remark}

The following results give the BSVY formulae
and Gagliardo--Nirenberg type inequalities
in local and global generalized Herz spaces.

\begin{theorem}\label{herz}
Let $u,r\in[1,\infty)$ and $\omega\in M(\mathbb{R}_+)$ satisfy
\begin{align*}
-\frac nu<m_0(\omega)\le M_0(\omega)<\frac n{u'}
\ and \
-\frac nu<m_\infty(\omega)\le M_\infty(\omega)<\frac n{u'}.
\end{align*}
\begin{enumerate}
\item[{\rm(i)}] Let $q\in(0,\infty)$ satisfy
$n(\frac1u-\frac1q)<1$ and $\gamma\in\Gamma_{u,q}$.
Then, for any $\xi\in\mathbb{R}^n$
and $f\in \dot{W}^{1,\dot{\mathcal{K}}^{u,r}_{\omega,\xi}}$,
\eqref{xgamma<0e1} holds with
$X:=\dot{\mathcal{K}}^{u,r}_{\omega,\xi}$; moreover, for any
$f\in \dot{W}^{1,\dot{\mathcal{K}}^{u,r}_{\omega}}$,
\eqref{xgamma<0e1} holds with
$X:=\dot{\mathcal{K}}^{u,r}_{\omega}$.
\item[{\rm(ii)}] Let all the symbols be the same as in Theorem \ref{c1016}.
If $s_1\in(-\infty,\frac{1}{p_1})\cup(1,\infty)$,
then, for any $\xi\in\mathbb{R}^n$
and $f\in F^{s_1}_{\dot{\mathcal{K}}^{up_1,rp_1}_{\omega^{p_1},\xi},p_1}\cap
W^{1,\dot{\mathcal{K}}^{u,r}_{\omega,\xi}}$,
\eqref{e853} holds with $X:=\dot{\mathcal{K}}^{u,r}_{\omega,\xi}$;
moreover, for any $f\in F^{s_1}_{\dot{\mathcal{K}}^{up_1,rp_1}_{\omega^{p_1}},p_1}\cap
W^{1,\dot{\mathcal{K}}^{u,r}_{\omega}}$,
\eqref{e853} holds with $X:=\dot{\mathcal{K}}^{u,r}_{\omega}$.
\item[{\rm (iii)}]Let all the symbols be the same as in Theorem \ref{2256},
$\xi\in\mathbb{R}^n$, $X\in\{\dot{\mathcal{K}}^{u,r}_{\omega,\xi},
\dot{\mathcal{K}}^{u,r}_{\omega}\}$, and $f\in \dot{W}^{1,X}$.
Then \eqref{939} holds.
\item[{\rm (iv)}]Let all the symbols be the same as in Theorem \ref{2255},
$\xi\in\mathbb{R}^n$, $X\in\{\dot{\mathcal{K}}^{u,r}_{\omega,\xi},
\dot{\mathcal{K}}^{u,r}_{\omega}\}$, and $f\in \dot{W}^{1,X}$.
If $p_1\in[1,\infty)$, then
\eqref{2053} holds.
If $p_1=\infty$, then \eqref{2054} holds.
\end{enumerate}
\end{theorem}

\begin{proof}
We first show (i). Fix $\xi\in\mathbb{R}^n$.
Then, by the proof of \cite[Theorem 4.15]{zyy23},
we conclude that the Herz space
$\dot{\mathcal{K}}^{u,r}_{\omega,\xi}$
under consideration satisfies
all the assumptions of Theorem \ref{xgamma<0} with
$p:=u$ therein and $q,\gamma$ as in (i).
This further implies that, for any
$f\in\dot{W}^{1,\dot{\mathcal{K}}^{u,r}_{\omega,\xi}}$,
\eqref{xgamma<0e1} holds with
$X:=\dot{\mathcal{K}}^{u,r}_{\omega,\xi}$.

Next, we choose
$f\in \dot{W}^{1,\dot{\mathcal{K}}^{u,r}_{\omega}}$.
Then, from Definition \ref{gh},
we infer that, for any $\xi\in\mathbb{R}^n$,
$\|\,|\nabla f|\,\|_
{\dot{\mathcal{K}}^{u,r}_{\omega,\xi}}
\le\|\,|\nabla f|\,\|_
{\dot{\mathcal{K}}^{u,r}_{\omega}}<\infty$,
which further implies $f\in\dot{W}^{1,\dot{\mathcal{K}}^{u,r}_{\omega,\xi}}$.
Combining this and (i), we find that, for any $\xi\in\mathbb{R}^n$,
\eqref{xgamma<0e1} holds with $X:=\dot{\mathcal{K}}^{u,r}_{\omega,\xi}$.
On the other hand, using
\cite[Theorems 1.5.1 and 1.7.3 and Lemma 1.7.7]{lyh2320},
we conclude that, for any $\xi\in\mathbb{R}^n$,
the operator norm $\|M\|_{[(\dot{\mathcal{K}}^{u,r}_{\omega,\xi})^{\frac1u}]'
\to [(\dot{\mathcal{K}}^{u,r}_{\omega,\xi})^{\frac1u}]'}$
of the Hardy--Littlewood maximal operator
is independent of $\xi$.
This, together with \eqref{xgamma<0e1},
further implies that, for any $\xi\in\mathbb{R}^n$,
\begin{align*}
\sup_{\lambda\in(0,\infty)}\lambda
\left\|\left[\int_{\mathbb{R}^n}
{\bf 1}_{E_{\lambda,\frac{\gamma}{q}}[f]}(\cdot,y)
|\cdot-y|^{\gamma-n}\,dy\right]^{\frac1q}\right\|_
{\dot{\mathcal{K}}^{u,r}_{\omega,\xi}}
\sim\left\|\,\left|\nabla f\right|\,\right\|_
{\dot{\mathcal{K}}^{u,r}_{\omega,\xi}}
\end{align*}
with the positive equivalence constants independent
of both $f$ and $\xi$.
From this and Definition \ref{gh} again,
we further deduce that \eqref{xgamma<0e1} holds with
$X:=\dot{\mathcal{K}}^{u,r}_{\omega}$, which completes
the proof of (i).

Finally, applying an argument similar
to that used in the proof of (i)
with Theorem \ref{xgamma<0} with
$p:=u$ therein and $q,\gamma$ as in (i)
replaced by Theorem \ref{xgamma<0} with
$p:=q:=1$ and $\gamma\in\Gamma_{1,1}$,
we conclude (ii), (iii), and (iv) and hence
finish the proof of Theorem \ref{herz}.
\end{proof}

\begin{remark}
To the best of our knowledge,
Theorem \ref{herz} is completely new.
\end{remark}

\noindent\textbf{Data Availability}\quad The manuscript has no associated data.

\section*{Declarations}

\noindent\textbf{Conflict of interest}\quad On behalf of all authors, 
the corresponding author states that there is no conflict of
interest.

\bigskip

\noindent Yinqin Li,
Dachun Yang (Corresponding author), Wen Yuan, Yangyang Zhang and Yirui Zhao

\medskip

\noindent  Laboratory of Mathematics and Complex Systems
(Ministry of Education of China),
School of Mathematical Sciences, Beijing Normal University,
Beijing 100875, The People's Republic of China

\smallskip

\noindent{\it E-mails:} \texttt{yinqli@mail.bnu.edu.cn} (Y. Li)

\noindent\phantom{\it E-mails:} \texttt{dcyang@bnu.edu.cn} (D. Yang)

\noindent\phantom{\it E-mails:} \texttt{wenyuan@bnu.edu.cn} (W. Yuan)

\noindent\phantom{\it E-mails:} \texttt{yangyzhang@bnu.edu.cn} (Y. Zhang)

\noindent\phantom{\it E-mails:} \texttt{yiruizhao@mail.bnu.edu.cn} (Y. Zhao)

\end{document}